\documentclass{amsart}
\usepackage[utf8]{inputenc}
\usepackage{microtype}
\usepackage{amsfonts, amsmath, amssymb, amsthm, mathtools}
\usepackage[foot]{amsaddr}

\usepackage{xcolor}
\definecolor{darkblue}{rgb}{0,0,0.6}
\usepackage{tikz}
\usetikzlibrary{matrix,arrows,trees}
\usepackage{tikz-cd}
\usepackage[colorlinks,colorlinks=true, citecolor=darkblue, filecolor=darkblue, linkcolor=darkblue, urlcolor=darkblue]{hyperref}
\usepackage[nameinlink, capitalise]{cleveref}
\hypersetup{bookmarksopen=true}

\Crefname{lem}{Lemma}{Lemmas}

\usepackage{eucal}                  

\numberwithin{equation}{section}

\swapnumbers
\newtheorem{theorem}[equation]{Theorem}
\newtheorem{prop}[equation]{Proposition}
\newtheorem{lem}[equation]{Lemma}
\newtheorem{cor}[equation]{Corollary}

\theoremstyle{definition}
\newtheorem{defn}[equation]{Definition}
\newtheorem{ex}[equation]{Example}

\newtheorem{rem}[equation]{Remark}

\renewcommand{\epsilon}{\varepsilon}
\renewcommand{\phi}{\varphi}


\newcommand{\bbZ}{\mathbb{Z}}


\newcommand{\cA}{\mathcal{A}}
\newcommand{\cB}{\mathcal{B}}
\newcommand{\cC}{\mathcal{C}}
\newcommand{\cD}{\mathcal{D}}
\newcommand{\cE}{\mathcal{E}}

\newcommand{\cK}{\mathcal{K}}

\newcommand{\cP}{\mathcal{P}}

\newcommand{\cU}{\mathcal{U}}

\newcommand*\cocolon{%
	\nobreak
	\mskip6mu plus1mu
	\mathpunct{}%
	\nonscript
	\mkern-\thinmuskip
	{:}%
	\mskip2mu
	\relax
}

\DeclareFontFamily{U}{min}{}
\DeclareFontShape{U}{min}{m}{n}{<-> dmjhira}{}
\newcommand{\yo}{\text{\usefont{U}{min}{m}{n}\symbol{'110}}}

\newcommand{\add}{\mathrm{add}}
\newcommand{\exct}{\mathrm{ex}}
\newcommand{\f}{\mathrm{f}}
\newcommand{\fg}{\mathrm{fg}}
\newcommand{\op}{\mathrm{op}}
\newcommand{\st}{\mathrm{st}}

\newcommand{\wt}{\mathrm{wt}}

\newcommand{\Ab}{\mathrm{Ab}}
\newcommand{\catadd}{\mathrm{Cat}^\add}
\newcommand{\catrex}{\mathrm{Cat}^\mathrm{rex}}
\newcommand{\catst}{\mathrm{Cat}^\st}
\newcommand{\catheart}{\mathrm{Cat}^\heartsuit}
\newcommand{\catweight}{\mathrm{Cat}^\wt}
\newcommand{\derb}{D^\mathrm{b}}
\newcommand{\exact}{\mathrm{Exact}}
\newcommand{\eff}{\mathrm{eff}}
\newcommand{\pshadd}{\cP_\Sigma}
\newcommand{\Ac}{\mathrm{Ac}}
\newcommand{\An}{\mathrm{An}}
\newcommand{\ing}{\mathrm{in}}
\newcommand{\pr}{\mathrm{pr}}
\newcommand{\stabadd}{\mathrm{St}^{\add}}
\newcommand{\stab}{\mathrm{St}}
\newcommand{\Sp}{\mathrm{Sp}}

\DeclareMathOperator{\cofib}{cofib}
\DeclareMathOperator{\coker}{coker}
\DeclareMathOperator{\colim}{colim}
\DeclareMathOperator{\fgt}{fgt}
\DeclareMathOperator{\fib}{fib}
\DeclareMathOperator{\Fun}{Fun}
\DeclareMathOperator{\Hom}{Hom}
\DeclareMathOperator{\id}{id}
\DeclareMathOperator{\inc}{inc}

\DeclareMathOperator{\K}{K}
\DeclareMathOperator{\Nat}{Nat}
\DeclareMathOperator{\RHom}{RHom}
\DeclareMathOperator{\SW}{SW}

\DeclareRobustCommand{\SkipTocEntry}[5]{}

\title{On exact categories and their stable envelopes}
\thanks{This version of the article has been accepted for publication, after peer review, but is not the Version of Record and does not reflect post-acceptance improvements, or any corrections. The Version of Record is available online at: \url{http://dx.doi.org/10.1007/s00209-025-03904-6}}

\author{Victor Saunier}
\address[VS]{Universit\'e Sorbonne Paris Nord, LAGA, CNRS, 93430 Villetaneuse, France \\ Fakult\"at f\"ur Mathematik, Universit\"at Bielefeld, 33615 Bielefeld, Germany}
\email{saunier@math.univ-paris13.fr, vsaunier@math.uni-bielefeld.de}

\author{Christoph Winges}
\address[CW]{Fakult\"at f\"ur Mathematik, Universit\"at Regensburg, 93040 Regensburg, Germany}
\email{christoph.winges@ur.de}

\date{}

\begin{document}

\begin{abstract}
    We show that Klemenc's stable envelope of exact $\infty$-categories induces an equivalence between stable $\infty$-categories with a bounded heart structure and weakly idempotent complete exact $\infty$-categories. 
    Moreover, we generalise the Gillet--Waldhausen theorem to the connective algebraic K-theory of exact $\infty$-categories and deduce a universal property of connective algebraic K-theory as an additive invariant on exact $\infty$-categories.
    
    A key tool is a generalisation of a theorem due to Keller which provides a sufficient condition for an exact functor to induce a fully faithful functor on stable envelopes.
\end{abstract}

\keywords{stable envelope, Gabriel--Quillen embedding, heart structures, Gillet--Waldhausen theorem}
\subjclass{19D99; 18F25, 18G80, 18E20}

\maketitle
\tableofcontents


\section{Introduction}

Exact categories were invented by Quillen in \cite{quillen:higher-k} to get around the following problem: if $R$ is a regular ring, then there exist two equivalent presentations of $K_0(R)$,
either as the group-completion of the set of isomorphism classes of objects in the additive category of finitely generated projective modules, or by considering the category of finitely presented modules and further modding out by exact sequences. 
Exact categories provide an interpolation between these two extremes.

Barwick observed in \cite{barwick:heart} that a suitable formulation of the definition of an exact category can be adopted verbatim to define the notion of an exact $\infty$-category as an additive $\infty$-category equipped with a notion of exact sequences, see \cref{sec:basics}.
Small exact $\infty$-categories and exact functors between them assemble into an $\infty$-category $\exact$.

Minimally, every additive $\infty$-category becomes an exact $\infty$-category by declaring only the split bifibre sequences to be exact.
Maximally, a stable $\infty$-category carries an exact structure in which every bifibre sequence is an exact sequence.
It is easy to check that the $\infty$-categories $\catadd$ of additive $\infty$-categories and $\catst$ of stable $\infty$-categories both become full subcategories of $\exact$ in this way.
The inclusion of $\catadd$ into $\exact$ admits a right adjoint by taking the underlying additive $\infty$-category of an exact $\infty$-category,
and we will recall in \cref{sec:gabriel-quillen} results of Klemenc which show that the inclusion of $\catst$ admits a left adjoint
\[ \stab \colon \exact\to\catst \]
called the \textit{stable envelope}.

For ordinary exact categories $\cE$, the study of the functor $\cE \to \derb(\cE)$ to the bounded derived category has received plenty of attention; here, we understand $\derb(\cE)$ as the localisation of the category of bounded chain complexes in $\cE$ at the quasi-isomorphisms.
Keller in \cite{keller:derived-cats-univ-prob} and subsequently Porta in \cite{porta:up-triangulated-derivators} have provided universal properties for this map when considering the homotopy category of $\derb(\cE)$ and its triangulated structure, and refining it via the theory of derivators.
If $\cE$ is abelian, then Antieau, Gepner and Heller have given in \cite{agh:k-obstr-to-t-str} a universal property of the bounded derived category as a stable category equipped with a t-structure.
For an arbitrary exact category $\cE$, Bunke, Cisinski, Kasprowski and the second author have given in \cite{bckw:controlled} a universal property of the stable $\infty$-category $\derb(\cE)$, thus showing that it coincides with the stable envelope $\stab(\cE)$.

Returning to the general case of exact $\infty$-categories $\cE$,
Klemenc has shown in \cite{klemenc:stablehull} that the canonical map $\cE\to\stab(\cE)$ is fully faithful, closed under extensions and detects exact sequences. 
In particular, combined with the above, this is a version of the Gabriel--Quillen embedding theorem in the context of higher categories.
See \cref{sec:gabriel-quillen} for more details.

The main goal of this paper is to study further properties of the adjunction $\stab \colon \exact \rightleftarrows \catst \cocolon \inc$, and to discuss some consequences of these results for algebraic K-theory.\\

We can recast the fully faithfulness of $\cE \to \stab(\cE)$ in the light of a more general phenomenon. The category $\Ab^\fg$ of finitely generated abelian groups is a full subcategory of $\Sp^\omega$, the $\infty$-category of compact spectra, and it is closed under extensions.
Passage to stable envelopes induces a map
\[ \stab(\Ab^\fg) \simeq \derb(\bbZ) \to\Sp. \]
It is well-known that this map is not fully faithful.
However, as Klemenc's result shows, there are many cases where a full subcategory closed under extensions does induce a fully faithful map (in fact an equivalence) upon passage to stable envelopes.
Our first result identifies a property which guarantees that taking stable envelopes preserves fully faithfulness.

\begin{defn}
    Let $\cE$ be an exact $\infty$-category and let $\cU$ be a full subcategory closed under extensions.
    Then $\cU$ is \emph{left special} in $\cE$ if for every projection $p \colon x\twoheadrightarrow v$ with $x \in \cE$ and $v \in \cU$, there exists a morphism $f \colon u \to x$ with $u \in \cU$ such that the composite $p \circ f \colon u \to v$ is a projection in $\cU$.
\end{defn}

Left special subcategories have received a certain amount of attention in the study of exact categories.
We have borrowed the term from \cite{schlichting:delooping}, but the condition itself goes back to at least \cite{keller:derived-cats}.
In fact, \cref{thm:keller} establishes the following generalisation of \cite[Theorem~12.1]{keller:derived-cats}.

\begin{theorem}[Keller's criterion]
 Let $\cE$ be an exact $\infty$-category. 
 If $\cU \subseteq \cE$ is a left special subcategory of $\cE$, then the induced functor $\stab(\cU) \to \stab(\cE)$ is fully faithful.
\end{theorem}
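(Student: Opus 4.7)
The plan is to prove fully faithfulness of $F\colon \stab(\cU)\to\stab(\cE)$ by reducing to a comparison of Ext-groups computed intrinsically in the two exact $\infty$-categories. Because $\cU$ generates $\stab(\cU)$ under finite colimits and desuspensions (a consequence of Klemenc's construction of the stable envelope), and mapping spaces turn these operations into fibre sequences, a standard cell-induction reduces the comparison of mapping spaces to checking that, for all $u,u'\in\cU$ and all $k\geq 0$, the canonical map
\[ \pi_0\mathrm{Map}_{\stab(\cU)}(u,\Omega^k u')\longrightarrow \pi_0\mathrm{Map}_{\stab(\cE)}(u,\Omega^k u') \]
is a bijection. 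Phrased in terms of Ext, this is the claim that $\mathrm{Ext}^k_{\cU}(u,u')\to\mathrm{Ext}^k_{\cE}(u,u')$ is an isomorphism for every $k\geq 0$ and every pair $u,u'\in\cU$.

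To access these Ext-groups intrinsically, I would invoke Klemenc's Gabriel--Quillen embedding: both $\cU\to\stab(\cU)$ and $\cE\to\stab(\cE)$ are fully faithful, closed under extensions, and detect exact sequences. This allows one to describe $\mathrm{Ext}^k(u,u')$ in the Yoneda style, as equivalence classes of $k$-fold exact sequences $u'\rightarrowtail e_1\to\dots\to e_k\twoheadrightarrow u$ modulo the relation generated by morphisms of such sequences fixing the endpoints; the comparison map is then induced by viewing a $\cU$-extension as an $\cE$-extension.

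Injectivity is essentially formal: if a $\cU$-extension becomes trivial (or equivalent to another $\cU$-extension) in $\cE$, the relevant splittings or morphisms of extensions already live in $\cU$ by full faithfulness of the inclusion, providing witnesses of equivalence inside $\cU$. Surjectivity is where the left special condition enters. For $k=1$, given $u'\rightarrowtail e\twoheadrightarrow u$ in $\cE$ with $u,u'\in\cU$, apply left specialness to $e\twoheadrightarrow u$ to produce $u_0\to e$ with $u_0\in\cU$ such that $u_0\to u$ is a projection in $\cU$; taking the fibre $u_0':=\fib(u_0\to u)\in\cU$ yields a morphism of extensions $(u_0'\rightarrowtail u_0\twoheadrightarrow u)\to(u'\rightarrowtail e\twoheadrightarrow u)$ over the identity on $u$, identifying the original class with the pushforward along $u_0'\to u'$ of a class in $\mathrm{Ext}^1_{\cU}(u,u_0')$, hence with a class coming from $\mathrm{Ext}^1_{\cU}(u,u')$. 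For $k\geq 2$, I would iterate the argument at the rightmost projection $e_k\twoheadrightarrow u$ to trade $e_k$ for an object of $\cU$ and peel off a one-term extension in $\cU$, reducing to a $(k-1)$-fold extension problem to which the inductive hypothesis applies.

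The main obstacle I anticipate is twofold. First, the Yoneda-style description of Ext in an exact $\infty$-category must be established with care, tracking the higher coherences needed to model $\pi_0\mathrm{Map}_{\stab(\cE)}(u,\Omega^k u')$ in the $\infty$-categorical setting; the fact that $\cE\to\stab(\cE)$ detects exact sequences is essential here. Second, the inductive replacement for $k\geq 2$ requires that repeated applications of left specialness produce coherently compatible morphisms of $k$-fold extensions so that the pushforward argument assembles into genuine equalities in $\mathrm{Ext}^k$; this bookkeeping, together with the dual checks that the generating equivalence relation on extensions is preserved under the replacement, is the technically delicate part of the argument.
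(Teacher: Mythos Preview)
Your reduction to comparing $\mathrm{Ext}^k$ on objects of $\cU$ is the same first step the paper takes. (Note a sign slip: you want $\Sigma^k u'$, not $\Omega^k u'$; the positive homotopy groups of the mapping spectrum already agree by full faithfulness of $\cU\subseteq\cE$, so the content lies entirely in negative degrees.)

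Where you diverge from the paper is in the mechanism for comparing higher Ext. The paper does \emph{not} pass through a Yoneda description of Ext. Instead it observes that $\{\RHom^n_\cE(-,v)\}_{n\geq 0}$, restricted along $\cU\hookrightarrow\cE$, is a $\delta$-functor on $\cU$, and that the left special hypothesis makes $\RHom^n_\cE(-,v)|_\cU$ \emph{weakly effaceable} for $n\geq 1$ (every class is killed by pullback along some projection in $\cU$). Grothendieck's criterion then says this $\delta$-functor is universal, and since $\RHom^0_\cU(-,v)\simeq\RHom^0_\cE(-,v)|_\cU$, uniqueness of universal $\delta$-functors forces agreement in all degrees. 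This packages injectivity and surjectivity simultaneously and entirely avoids the need to model Ext by $k$-fold extensions in an $\infty$-categorical setting.

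Your approach has a genuine gap at the injectivity step. You claim it is ``essentially formal'' by full faithfulness of $\cU\subseteq\cE$, but for $k\geq 2$ the Yoneda equivalence relation is generated by zigzags of morphisms of $k$-fold extensions, and the \emph{intermediate} extensions in such a zigzag need not have their middle terms in $\cU$. Full faithfulness only transports morphisms between objects already known to lie in $\cU$; it does not by itself produce $\cU$-replacements for those intermediate extensions. Fixing this would require an argument of the same flavour as your surjectivity step---which is to say, you would end up reproving effaceability and dimension-shifting by hand. The paper's $\delta$-functor formulation is precisely the device that makes this bookkeeping disappear, and it also sidesteps the (real, as you note) problem of setting up Yoneda Ext with all coherences in the $\infty$-categorical context.
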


This also holds for right special inclusions, i.e.\ those inclusions whose opposite is left special. Since the composite of a left and a right special inclusion need not be special on either side, this provides many other examples. 
An alternative proof of \cref{thm:keller} can be found in \cite{nw:presentable-stable-envelope}.

For a general exact $\infty$-category $\cE$, the construction of $\stab(\cE)$ proceeds as follows.
Consider $\stabadd(\cE)$, the stabilisation of the underlying additive $\infty$-category of $\cE$. 
This admits a tractable model as the Spanier--Whitehead stabilisation of $\pshadd^\f(\cE)$, which is by definition the smallest full subcategory of $\pshadd(\cE) := \Fun^\times(\cE^\op, \Sp_{\geq 0})$ containing the image of the (enriched) Yoneda embedding $\yo \colon \cE \to \pshadd(\cE)$ and closed under finite colimits.
In particular, $\stabadd(\cE)$ is a full subcategory of $\Fun^\times(\cE^\op, \Sp)$.

Then, to build $\stab(\cE)$, it suffices to force the image of the non-split exact sequences of $\cE$ to be exact. 
This can be achieved by considering for each exact inclusion $i \colon x \rightarrowtail y$ with cofibre $z$ the canonical comparison map
\[ c(i) \colon \cofib(\yo(i)) \to \yo(z). \]
Denote the cofibre of $c(i)$ by $E(i)$.
Defining $\Ac(\cE)$, the category of \textit{acyclics} over $\cE$, to be the stable subcategory of $\stabadd(\cE)$ generated by objects of the form $E(i)$,
the stabilisation of $\cE$ can be defined as the Verdier quotient
\[ \stab(\cE) := \stabadd(\cE)/\Ac(\cE) \]
of $\stabadd(\cE)$ by the subcategory of acyclics.

In \cite{neeman:counterexamples}, Neeman shows that if $\cE$ is an idempotent-complete exact category, then $\Ac(\cE)$ admits a bounded t-structure.
He also identifies the heart as the category of effaceable functors $\cE^\op\to\Ab$, as defined by Schlichting in Lemma 9 of Section 11 of \cite{schlichting:negative-kth-derived-cat}.

It is shown in \cite{rsw:every-spectrum} that this phenomenon holds in the higher world when $\cE$ is the underlying additive category of a stable $\infty$-category and \cite{klemenc:stablehull} hints at this phenomenon more generally for every exact $\infty$-category $\cE$.
\cref{prop:elementary-acyclics} and \cref{cor:acyclics-t-structure} show that this is indeed the case.

\begin{prop}
    For every exact $\infty$-category $\cE$, the stable $\infty$-category $\Ac(\cE)$ carries a bounded t-structure whose heart $\eff(\cE)$ is the full subcategory consisting precisely of objects of the form $E(i)$ for some exact inclusion $i$.
    In particular, all objects in the heart are effaceable in Schlichting's sense.
\end{prop}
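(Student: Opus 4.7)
The plan is to exhibit a bounded $t$-structure on $\Ac(\cE)$ by restricting the standard pointwise $t$-structure on the ambient category $\Fun^\times(\cE^\op, \Sp)$, and to identify its heart with the full subcategory on objects of the form $E(i)$.

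First I would verify that every $E(i)$ lies in the heart of the ambient $t$-structure. Given an exact sequence $i \colon x \rightarrowtail y$ with cofibre $p \colon y \twoheadrightarrow z$, the defining bicartesian square is in particular a pullback in $\cE$; since the Yoneda embedding preserves limits, evaluating at $c \in \cE$ yields a fibre sequence of connective spectra (equivalently, of grouplike $E_\infty$-spaces)
\[ \yo(x)(c) \to \yo(y)(c) \to \yo(z)(c). \]
This is generally not a cofibre sequence in $\Sp$; the failure is exactly that $\pi_0 \yo(y)(c) \to \pi_0 \yo(z)(c)$ can fail to be surjective. A direct long exact sequence computation then shows that the comparison $c(i)(c) \colon \cofib(\yo(i))(c) \to \yo(z)(c)$ is an isomorphism on $\pi_n$ for $n \geq 1$ and identifies $\pi_0 \cofib(\yo(i))(c)$ with the image of $\pi_0 \yo(y)(c)$ inside $\pi_0 \yo(z)(c)$. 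Hence $E(i)(c)$ is concentrated in degree zero with $\pi_0 E(i)(c) = \coker(\pi_0 \yo(y)(c) \to \pi_0 \yo(z)(c))$, which is effaceable in Schlichting's sense.

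Next, I would set $\Ac(\cE)_{\geq 0}$ (resp.\ $\Ac(\cE)_{\leq 0}$) to be the full subcategory of objects whose homotopy presheaves vanish in negative (resp.\ positive) degrees. The orthogonality axiom is inherited from the ambient $t$-structure; the key technical point is that the truncation functors preserve $\Ac(\cE)$, which would be verified by induction on the cofibre presentation of any object of $\Ac(\cE)$ in terms of the heart-concentrated generators $E(i)$. Boundedness is then automatic because every object of $\Ac(\cE)$ is built from finitely many $E(i)$'s via finitely many cofibre constructions.

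The main obstacle will be identifying the heart with $\{E(i)\}$. One inclusion is established in the first step. For the converse, boundedness together with standard $t$-structure theory forces any heart object to be an iterated extension of $E(i)$'s, so the crux is to show that $\{E(i)\}$ is closed under extensions in $\Ac(\cE)$. Given a cofibre sequence $E(i_1) \to E \to E(i_2)$ of heart objects, the task is to produce an exact inclusion $j$ in $\cE$ with $E(j) \simeq E$. I expect to carry this out by a horseshoe-style construction analogous to the one used by Schlichting and Neeman in the ordinary setting, assembling $j$ from $i_1$, $i_2$ and the extension datum via suitable pushouts in $\cE$; the delicate point is that $\cE$ does not in general admit arbitrary limits or colimits, only those along admissible morphisms, so one must ensure that every intermediate construction remains within the exact structure.
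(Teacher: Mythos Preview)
Your overall strategy—restricting the Postnikov t-structure on $\Fun^\times(\cE^\op,\Sp)$—is exactly the paper's approach, and your computation that each $E(i)$ is discrete with $\pi_0 E(i) = \coker(\pi_0\yo(p))$ is correct and matches the paper's argument in \cref{prop:elementary-acyclics}.

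There is, however, a gap in your step~2, and the paper's route differs in a way that closes it. For the induction showing that truncations preserve $\Ac(\cE)$, consider a cofibre $C$ of a map between objects $A,B$ whose homotopy presheaves are already known to lie in $\{E(i)\}$. The long exact sequence exhibits $\pi_k C$ as an extension of a cokernel by a kernel of maps between objects of $\{E(i)\}$. So you need $\{E(i)\}$ to be closed under kernels, cokernels, \emph{and} extensions in $\Fun^\times(\cE^\op,\Ab)$—a Serre-subcategory condition—not just extensions. Your step~3 only treats extensions, and the ``iterated extension'' argument you sketch there is in any case redundant: once step~2 is established correctly, any heart object $H$ already satisfies $H \simeq \pi_0 H \in \{E(i)\}$ with no further work.

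The paper avoids a bare-hands horseshoe by first proving an intrinsic characterisation (\cref{prop:elementary-acyclics}): an additive functor $F \colon \cE^\op \to \Ab$ is of the form $E(i)$ if and only if it is weakly effaceable and arises as $\coker(\pi_0\yo(f))$ for some morphism $f$ in $\cE$. With this description in hand, the full Serre-type closure becomes tractable—weak effaceability is visibly stable under subobjects, quotients, and extensions, and the remaining finite-presentation condition is handled by citing \cite[Corollary~3.13]{klemenc:stablehull}. The paper then packages this as \cref{prop:acyclics}: an object of $\stabadd(\cE)$ is acyclic precisely when its homotopy presheaves are bounded and lie in $\eff(\cE)$; the t-structure statement (\cref{cor:acyclics-t-structure}) is then immediate. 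Your horseshoe idea could likely be pushed through for extensions and cokernels, but kernels are more delicate, and the effaceability characterisation is the organising principle that makes all three closure properties go through uniformly.
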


This statement has also been observed by Efimov \cite[Proposition~G.7]{efimov:continuous-k}.

Since $\pshadd^\f(\cE)$ is a full subcategory of $\stabadd(\cE)$, it has an essential image in $\stab(\cE)$, which we denote $\stab_{\geq 0}(\cE)$. Adapting Klemenc's arguments, we are able to show in \cref{thm:gabriel-quillen} the following refinement of the Gabriel--Quillen embedding; see \cref{sec:basics} for terminology.

\begin{theorem}
 Let $\cE$ be an exact $\infty$-category.
 Then $j \colon \cE \to \stab_{\geq 0}(\cE)$ exhibits $\cE$ as an extension-closed full subcategory, and the induced exact structure on $\cE$ coincides with the given exact structure.
 In addition, the following are equivalent:
 \begin{enumerate}
     \item $\cE$ is weakly idempotent complete;
     \item $\cE$ is closed under fibres of projections in $\stab_{\geq 0}(\cE)$.
 \end{enumerate}
\end{theorem}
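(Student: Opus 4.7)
The first two assertions follow closely from Klemenc's form of the Gabriel--Quillen embedding recalled in \cref{sec:gabriel-quillen}. By construction the Yoneda embedding $\yo$ lands inside $\pshadd^\f(\cE)$, so the composite $\cE \to \stabadd(\cE) \to \stab(\cE)$ factors through $\stab_{\geq 0}(\cE)$ and furnishes the functor $j$; fully faithfulness of $j$ is inherited from the fact that $\cE \to \stab(\cE)$ is fully faithful. If $x \to y \to z$ is a fibre sequence in $\stab_{\geq 0}(\cE)$ with $x, z \in \cE$, then it is a fortiori a fibre sequence in $\stab(\cE)$, and Klemenc's extension-closedness places $y$ in $\cE$. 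The same observation together with Klemenc's detection of exact sequences shows that a sequence in $\cE$ is exact if and only if its image in $\stab_{\geq 0}(\cE)$ is a fibre sequence, so the two exact structures on $\cE$ coincide.

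For the implication (ii) $\Rightarrow$ (i), let $r \colon y \to z$ be a retraction in $\cE$ with section $s \colon z \to y$. Because $\pshadd^\f(\cE)$ is closed under finite colimits in $\pshadd(\cE)$, the cofibre $C := \cofib(\yo(s))$ lies in $\pshadd^\f(\cE)$, and the relation $\yo(r) \circ \yo(s) = \id$ inside the stable category $\stabadd(\cE)$ yields a splitting $\yo(y) \simeq \yo(z) \oplus C$ which identifies $C$ with $\fib(\yo(r))$. Passing to $\stab(\cE)$ shows that $\fib(r)$ belongs to $\stab_{\geq 0}(\cE)$, so $r$ is a projection in $\stab_{\geq 0}(\cE)$; hypothesis (ii) then places $\fib(r)$ in $\cE$ and supplies the kernel required for weak idempotent completeness.

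The converse (i) $\Rightarrow$ (ii) is the main obstacle. Given a projection $p \colon y \to z$ in $\stab_{\geq 0}(\cE)$ with $y, z \in \cE$, the plan is to prove that $p$ is itself an admissible deflation in $\cE$, at which point the kernel of $p$ in $\cE$ provides the required fibre. The strategy is to produce an object $w \in \cE$ and a morphism $q \colon w \to y$ such that the composite $p \circ q$ is already an admissible deflation in $\cE$; such a lift should be extractable from the fact that $\fib(p)$ lies in $\stab_{\geq 0}(\cE)$ and therefore admits a presentation as a finite colimit of representables inside $\pshadd^\f(\cE)$. Completing $pq$ to an exact sequence $\fib(pq) \rightarrowtail w \twoheadrightarrow z$ in $\cE$ and comparing with the fibre sequence $\fib(p) \to y \to z$ via the octahedral axiom yields a fibre sequence $\fib(q) \to \fib(pq) \to \fib(p)$ in $\stab(\cE)$; together with the splitting of $pq$ at the level of $z$, this should exhibit $\fib(p)$ as a direct summand of an object of $\cE$, and weak idempotent completeness then forces $\fib(p) \in \cE$. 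The principal technical hurdle is the construction of the lift $q$: one must guarantee not just a morphism $w \to y$, but that the composite to $z$ is a deflation in $\cE$ rather than merely in the enveloping $\stab_{\geq 0}(\cE)$, and this likely requires an inductive argument on a presentation of $\fib(p)$ together with a controlled use of extension-closedness of $\cE$ inside $\stab_{\geq 0}(\cE)$.
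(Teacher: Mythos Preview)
Your treatment of extension-closedness, the coincidence of exact structures, and (ii) $\Rightarrow$ (i) is correct and matches the paper's approach via Klemenc's results.

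For (i) $\Rightarrow$ (ii) you correctly isolate the key step: producing $w \in \cE$ and $q \colon w \to y$ with $p q$ an admissible deflation in $\cE$. This is exactly the assertion that $\cE$ is \emph{left special} in $\stab_{\geq 0}(\cE)$, and the paper establishes it as \cref{lem:j-left-special}, using the calculus-of-fractions description of maps in the Verdier quotient together with the effaceability of $\pi_0$ of acyclics (\cref{prop:acyclics}). No separate induction on a presentation of $\fib(p)$ is required.

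Your finishing step, however, does not work as written. The octahedral fibre sequence $\fib(q) \to \fib(pq) \to \fib(p)$ only tells you that $\fib(p)$ is a \emph{cofibre} of a map whose source lies in $\cE$; there is no reason for this sequence to split, and the phrase ``splitting of $pq$ at the level of $z$'' does not name an actual splitting. The correct conclusion uses a different square: since $q$ is a map over $z$, the square
\[\begin{tikzcd}
    \fib(pq)\ar[r]\ar[d] & w\ar[d, "q"] \\
    \fib(p)\ar[r] & y
\end{tikzcd}\]
has both rows with cofibre $z$ and is therefore bicartesian in $\stab(\cE)$. By \cref{lem:pushouts} this yields an exact sequence
\[ \fib(pq) \rightarrowtail \fib(p) \oplus w \twoheadrightarrow y \]
in $\stab_{\geq 0}(\cE)$. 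Extension-closedness of $\cE$ now gives $\fib(p) \oplus w \in \cE$, and weak idempotent completeness forces $\fib(p) \in \cE$. This is precisely the paper's argument.
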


By a theorem of Sosnilo \cite{sosnilo:heart}, see also \cref{prop:weight-structure} below, the stable $\infty$-category $\stabadd(\cE)$ carries a bounded weight structure as it is the stable envelope of the underlying additive $\infty$-category of $\cE$ --- in particular, the \textit{weight heart} of this structure is the weak idempotent completion of the underlying additive $\infty$-category of $\cE$.
In addition, taking weight hearts induces an equivalence between stable $\infty$-categories with bounded weight structures and weakly idempotent complete additive $\infty$-categories.
Since $\stab(\cE)$ is a Verdier quotient of $\stabadd(\cE)$, it also inherits a structure that we now describe:

\begin{defn}
 Let $\cC$ be a stable $\infty$-category.
 A \emph{heart structure} on $\cC$ is a pair of full subcategories $(\cC_{\geq 0},\cC_{\leq 0})$ satisfying the following properties:
 \begin{enumerate}
     \item $\cC_{\geq 0}$ is closed under extensions and finite colimits, and $\cC_{\leq 0}$ is closed under extensions and finite limits;
     \item for every $x \in \cC$, there exists a cofibre sequence
     \[ x_{\leq 0} \to x \to \Sigma x_{\geq 0} \]
     with $x_{\leq 0} \in \cC_{\leq 0}$ and $x_{\geq 0} \in \cC_{\geq 0}$.
 \end{enumerate}
 Denote $\cC_{\geq n} := \Sigma^n\cC_{\geq 0}$ and $\cC_{\leq n} := \Sigma^n\cC_{\leq 0}$.
 Then the heart structure $(\cC_{\geq 0},\cC_{\leq 0})$ is \emph{bounded above} if $\cC = \bigcup_{n \in \bbZ} \cC_{\leq n}$, and it is \emph{bounded below} if $\cC = \bigcup_{n \in \bbZ} \cC_{\geq n}$.
 It is \emph{bounded} if it is both bounded above and bounded below.

 Let $\cD$ be another stable $\infty$-category with heart structure $(\cD_{\geq 0},\cD_{\leq 0})$.
 A \emph{heart-exact} functor $f \colon \cC \to \cD$ is an exact functor satisfying $f(\cC_{\geq 0}) \subseteq \cD_{\geq 0}$ and $f(\cC_{\leq 0}) \subseteq \cD_{\leq 0}$.

 Define the $\infty$-category of bounded heart categories $\catheart$ as the $\infty$-category of small stable $\infty$-categories with a bounded heart structure and heart-exact functors between them.
\end{defn}

\begin{rem}\label[rem]{rem:weight-structures}
    To illustrate the distinction, recall from \cite{bondarko:weight-structures,pauksztello:co-t,hs:moduli-spaces-of-hermitian-forms} that a \emph{weight structure} on a stable $\infty$-category $\cC$ is a pair of full subcategories $(\cC_{\geq 0},\cC_{\leq 0})$ satisfying the following properties:
    \begin{enumerate}
        \item $\cC_{\geq 0}$ and $\cC_{\leq 0}$ are closed under retracts;
        \item for every $x \in \cC$, there exists a cofibre sequence
     \[ x_{\leq 0} \to x \to \Sigma x_{\geq 0} \]
     with $x_{\leq 0} \in \cC_{\leq 0}$ and $x_{\geq 0} \in \cC_{\geq 0}$;
        \item\label{it:weight-3} for every $x \in \cC_{\leq 0}$ and every $y \in \cC_{\geq 0}$, the mapping spectrum $\hom_\cC(x,y)$ is connective.
    \end{enumerate}
    By \cite[Lemma~3.1.2]{hs:moduli-spaces-of-hermitian-forms}, the subcategory $\cC_{\geq 0}$ is closed under extensions and finite colimits and $\cC_{\leq 0}$ is closed under extensions and finite limits. So any weight structure is a heart structure.
    Moreover, \cite[Lemma~3.1.2]{hs:moduli-spaces-of-hermitian-forms} shows that either part of a weight structure determines the other part through the orthogonality condition \eqref{it:weight-3}.
\end{rem}

Heart categories were introduced by the first author in \cite[Definition~2.1]{saunier:heart}, where it was shown that if $\cC$ is a bounded heart category, then $\stab(\cC^\heartsuit)\to\cC$ is an equivalence.
As we will show in \cref{sec:bounded-heart-categories},
this result can also be recovered via Keller's criterion.
In \textit{loc.~cit.}, the generalisation of Sosnilo's result for heart structures was not touched upon.
Using the above material, we can give the following answer:

\begin{theorem}\label[theorem]{thm:main}
    Let $\cE$ be an exact $\infty$-category. 
    Then there is a bounded heart structure on $\stab(\cE)$ whose heart is the weak-idempotent completion of $\cE$.
    
    In consequence, there exists an adjunction
    \[ \stab \colon \exact \rightleftarrows \catheart \cocolon (-)^\heartsuit \]
    which identifies $\catheart$ with the full subcategory of weakly idempotent complete exact $\infty$-categories.

    Moreover, this adjunction restricts to the adjunction
    \[ \stabadd \colon \catadd \rightleftarrows \catweight \cocolon (-)^\heartsuit \]
    obtained by Sosnilo in \cite[Corollary~3.4]{sosnilo:heart},
    which identifies the $\infty$-category of stable $\infty$-categories with bounded weight structure with the full subcategory of weakly idempotent complete additive $\infty$-categories.
\end{theorem}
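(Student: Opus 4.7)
The plan is to transport Sosnilo's bounded weight structure on $\stabadd(\cE)$ across the Verdier quotient $q \colon \stabadd(\cE) \to \stab(\cE)$, identify the resulting heart as the weak idempotent completion of $\cE$, and then set up the adjunction using Keller's criterion together with the Gabriel--Quillen theorem.

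First, I would define $\stab_{\geq 0}(\cE)$ and $\stab_{\leq 0}(\cE)$ as the essential images under $q$ of the connective and coconnective parts of Sosnilo's weight structure on $\stabadd(\cE)$; on the connective side this is consistent with the definition already given in the introduction. The heart structure axioms descend from the weight structure: exactness and essential surjectivity of $q$ yield the cofibre sequences $x_{\leq 0}\to x\to \Sigma x_{\geq 0}$, while closure under extensions and finite (co)limits requires lifting extensions in $\stab(\cE)$ back to $\stabadd(\cE)$; here one can use the bounded t-structure on $\Ac(\cE)$ supplied by \cref{prop:elementary-acyclics} to control the lifts modulo acyclics. As \cref{rem:weight-structures} notes, the orthogonality condition characterising a weight structure need not survive this quotient, explaining why only a heart structure is obtained in general.

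The central step is identifying $\stab(\cE)^\heartsuit := \stab_{\geq 0}(\cE) \cap \stab_{\leq 0}(\cE)$ with the weak idempotent completion of $\cE$. One direction is essentially formal: Sosnilo's weight heart of $\stabadd(\cE)$ is the weak idempotent completion of the underlying additive $\infty$-category of $\cE$, and the Gabriel--Quillen theorem from the introduction upgrades this to an identification of exact $\infty$-categories with the induced exact structure on $\stab(\cE)^\heartsuit$. For the reverse inclusion I would use the description of $\Ac(\cE)$ from \cref{prop:elementary-acyclics}---in particular the effaceability of its heart---to show that any object lying in both halves of the heart structure already comes, up to acyclic summands, from the weight heart of $\stabadd(\cE)$.

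For the adjunction $\stab \dashv (-)^\heartsuit$, the counit $\stab(\cC^\heartsuit)\to\cC$ at a bounded heart category $\cC$ is the first author's main result in \cite{saunier:heart}; alternatively, it can be reproved via Keller's criterion, noting that the heart structure axioms make $\cC^\heartsuit$ a special subcategory of $\cC$ (giving fully faithfulness) and boundedness supplies essential surjectivity. The unit $\cE\to\stab(\cE)^\heartsuit$ is then, by the heart identification, the canonical inclusion of $\cE$ into its weak idempotent completion, hence an equivalence precisely when $\cE$ is weakly idempotent complete. These two facts together produce the identification of $\catheart$ with the weakly idempotent complete exact $\infty$-categories. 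Finally, the restriction to Sosnilo's adjunction follows by observing that when $\cE\in\catadd$ carries only split exact sequences one has $\Ac(\cE)\simeq 0$, whence $\stab(\cE)\simeq\stabadd(\cE)$ and the heart structure specialises to a weight structure; conversely, a bounded heart structure whose heart is a split-exact additive $\infty$-category is a weight structure by orthogonality. The hardest part will be verifying that the heart structure axioms genuinely descend to $\stab(\cE)$ and carrying out the reverse inclusion in the heart identification, both of which demand careful interplay between the weight structure on $\stabadd(\cE)$, the t-structure on $\Ac(\cE)$, and the Verdier quotient.
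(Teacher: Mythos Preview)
Your overall architecture matches the paper's: define the heart structure as the image of Sosnilo's weight structure under the Verdier projection, identify the heart via Gabriel--Quillen, and assemble the adjunction from the universal property together with Keller's criterion for the counit. The restriction to Sosnilo's adjunction is handled exactly as you say, via $\Ac(\cA)\simeq 0$ for split-exact $\cA$.

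There are, however, two places where the paper's execution diverges from yours in a way that matters.

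\textbf{Closure properties of $\stab_{\leq 0}(\cE)$.} You propose to lift extensions back to $\stabadd(\cE)$ and control the ambiguity using the bounded t-structure on $\Ac(\cE)$. The paper instead uses a duality trick: one has $\stab(\cE^\op)\simeq\stab(\cE)^\op$ (\cref{cor:stab-op}) and, by Sosnilo's theorem, the analogous equivalence on the additive stabilisations is weight-exact. Chasing through the square of Verdier projections identifies $\stab_{\leq 0}(\cE)$ with $\stab_{\geq 0}(\cE^\op)^\op$, so closure under extensions and finite limits for the coconnective part is literally the already-established closure under extensions and finite colimits for the connective part of $\cE^\op$. This bypasses any need to lift extensions by hand.

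\textbf{The reverse inclusion $\stab(\cE)^\heartsuit\subseteq\cE^\flat$.} This is where your sketch is genuinely too thin. Saying that an object of the heart ``comes, up to acyclic summands, from the weight heart of $\stabadd(\cE)$'' does not obviously make sense: an object $X\in\stab(\cE)^\heartsuit$ has a connective lift and a coconnective lift to $\stabadd(\cE)$, differing by an acyclic, but there is no reason that acyclic should be controlled enough to force either lift into the weight heart. The paper instead argues via \emph{resolutions}. Every object of $\stab_{\geq 0}(\cE)$ admits a finite-length resolution by representables (inherited from the weight structure upstairs), and a short Quillen-style induction (\cref{lem:resolutions-cofibre-sequences}) shows, for weakly idempotent complete $\cE$, that the class of objects admitting length $n$ resolutions is well-behaved under cofibre sequences. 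Using the duality above, an object $X$ of the heart also admits a finite-length \emph{co}resolution $X\to j(x)\to X'$ in $\stab_{\leq 0}(\cE)$; by induction $X'\in\cE$, and then the Gabriel--Quillen theorem (closure of $\cE$ under fibres of projections in $\stab_{\geq 0}(\cE)$) forces $X\in\cE$. You should replace the ``acyclic summands'' heuristic with this resolution argument.
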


The following statement about the interaction between algebraic K-theory and stable envelopes can be deduced from \cite[Theorem~3.1]{saunier:heart} by following Quillen's original arguments for the resolution theorem of \cite{quillen:higher-k}.
We will provide a short argument in \cref{sec:bounded-heart-categories} explaining how to assemble the proof.

\begin{theorem}[Gillet--Waldhausen]\label[theorem]{thm:gillet-waldhausen}
 	Let $\cE$ be an exact $\infty$-category. Then
 	\[ \K(j) \colon \K(\cE) \to \K(\stab(\cE)) \]
 	is an equivalence, where $\K$ denotes connective algebraic K-theory.
\end{theorem}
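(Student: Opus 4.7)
The plan is to factor the canonical inclusion $j \colon \cE \to \stab(\cE)$ through the weak idempotent completion $\hat{\cE}$ of $\cE$, i.e.\ as $\cE \hookrightarrow \hat{\cE} \hookrightarrow \stab(\cE)$, and to verify that both factors induce equivalences on connective K-theory.

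The factor $\hat{\cE} \to \stab(\cE)$ is handled directly by \cite[Theorem~3.1]{saunier:heart}: by \cref{thm:main}, $\stab(\cE)$ is a bounded heart category whose heart is precisely $\hat{\cE}$, so Saunier's identification of the connective K-theory of a bounded heart category with that of its heart immediately yields the equivalence $\K(\hat{\cE}) \xrightarrow{\simeq} \K(\stab(\cE))$.

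The remaining factor $\cE \to \hat{\cE}$ is the assertion that connective K-theory is invariant under weak idempotent completion of exact $\infty$-categories. I would establish this by adapting Quillen's original argument for the resolution theorem: the objects newly adjoined by the weak idempotent completion are exactly fibres of split projections and cofibres of split inclusions in $\cE$, so every object of $\hat{\cE}$ fits into a bifibre sequence whose other two vertices lie in $\cE$, hence admits a resolution of length one by objects of $\cE$. Filtering Waldhausen's $S$-construction of $\hat{\cE}$ by this resolution length and applying additivity at each stage of the filtration then shows that the inclusion is a connective K-theory equivalence. Since the composition $\cE \hookrightarrow \hat{\cE} \hookrightarrow \stab(\cE)$ recovers $j$, composing the two equivalences yields the theorem.

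The main obstacle will be carrying out Quillen's resolution filtration argument faithfully in the $\infty$-categorical framework. Although the classical statement for ordinary exact categories is essentially folklore, some care is required to realise the filtration by resolution length in terms of $S$-construction spaces for exact $\infty$-categories and to verify that additivity can be applied at each stage. Once this cofinality step is in place, the proof reduces to the two-step telescope sketched above, which is indeed short enough to fit the promised ``short argument''.
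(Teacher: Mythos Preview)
Your overall factorisation $\cE \to \hat{\cE} \to \stab(\cE)$ matches the paper's strategy, and your treatment of the second factor via \cite[Theorem~3.1]{saunier:heart} together with \cref{thm:main} is correct and is precisely what the introduction hints at. (The paper's proof in \cref{sec:bounded-heart-categories} actually unpacks this step by reproducing the resolution filtration inside $\stab_{\geq 0}(\cE)$ and invoking only the resolution theorem \cite[Theorem~1.4]{saunier:heart}; your black-box citation is an equally valid and shorter route.)

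The genuine gap is in your first step. Quillen's resolution argument---in the original form or in the form of \cite[Theorem~1.4]{saunier:heart}---requires the smaller category to be closed under fibres of projections in the larger one, and this fails for $\cE \subseteq \hat{\cE}$ whenever $\cE$ is not already weakly idempotent complete. Indeed, by \cref{thm:gabriel-quillen} these two conditions are equivalent; concretely, if $r \colon y \to x$ is a retraction in $\cE$ without a complement, then in $\hat{\cE}$ the map $r$ is a split projection between objects of $\cE$ whose fibre lies in $\hat{\cE} \setminus \cE$. The same example shows that the dual coresolution hypothesis also fails, so there is no quick symmetry fix. The filtration-by-resolution-length manoeuvre you sketch ultimately rests on this closure property (it is what makes the relevant comma categories contractible in Quillen's Theorem~A argument), so additivity alone does not rescue it.

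The correct tool here is the cofinality theorem \cite[Theorem~10.19]{barwick:ktheory}, which is exactly what the paper invokes: every object $X \in \cE^\flat$ satisfies $X \oplus j(x') \simeq j(x)$ for some $x,x' \in \cE$, and this cofinality is enough to conclude that $\K(\cE) \to \K(\cE^\flat)$ is an equivalence. Once you replace your resolution argument for the first step by cofinality, the proof goes through as you outlined.
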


Since $\stab(\cE)$ actually is the bounded derived category in the case of an exact category $\cE$ by \cite[Corollary~7.4.12]{bckw:controlled}, this is a generalisation of \cite[Theorem~1.11.7]{TT}, justifying its name.

In fact, the above discussion allows us to prove a version of the universal characterisation of algebraic K-theory proven by Blumberg, Gepner and Tabuada \cite{bgt:univcharkt}.
Namely, the notion of semi-orthogonal decompositions admits a straightforward generalisation to exact $\infty$-categories, and we say that an invariant is additive if it sends those decompositions to (split-)exact sequences of a stable category.

\begin{theorem}
    The natural transformation $\Sigma^\infty\iota\to\K$ of functors $\exact\to\Sp$ is initial amongst natural transformations $\Sigma^\infty\iota\to F$ with target a functor $F$ which is additive and invariant under passage to the stable envelope.
\end{theorem}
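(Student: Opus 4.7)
The strategy is to reduce the statement to the universal characterisation of connective K-theory on stable $\infty$-categories due to Blumberg--Gepner--Tabuada \cite{bgt:univcharkt}, transferring back via the stable envelope and Gillet--Waldhausen.

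The first step is to set up a restriction-extension mechanism along the inclusion $\inc\colon\catst\hookrightarrow\exact$. Any functor $F\colon\exact\to\Sp$ invariant under the stable envelope is equivalent to $\bar F\circ\stab$, where $\bar F:=F\circ\inc$. More generally, for any $\stab$-invariant $F$ and any $G\colon\exact\to\Sp$, restriction induces an equivalence
\[ \Nat_{\exact}(G, F) \simeq \Nat_{\catst}(G\circ\inc, \bar F), \]
with inverse sending $\alpha$ to the natural transformation whose component at $\cE$ is the composite
\[ G(\cE) \to G(\stab\cE) \xrightarrow{\alpha_{\stab\cE}} \bar F(\stab\cE) \xleftarrow{\sim} F(\cE). \]
Naturality and bijectivity follow from naturality of the unit $\cE\to\stab\cE$. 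Note that we do \emph{not} require $G$ itself to be $\stab$-invariant; this is crucial because $\Sigma^\infty\iota$ is manifestly not $\stab$-invariant (the envelope adds cofibres and loops). On the other hand, $G=\K$ \emph{is} $\stab$-invariant, by \cref{thm:gillet-waldhausen}.

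The second step is to check that additivity matches under this correspondence: $F$ on $\exact$ is additive if and only if $\bar F$ on $\catst$ is additive. One direction is immediate, since every split Verdier sequence of stable $\infty$-categories is a semi-orthogonal decomposition in $\exact$ (stable categories being equipped with their maximal exact structure). For the converse, one uses that the left adjoint $\stab$ sends semi-orthogonal decompositions of exact $\infty$-categories to split Verdier sequences of stable $\infty$-categories---a consequence of the refined Gabriel--Quillen embedding established in \cref{sec:gabriel-quillen} together with the preservation properties of a left adjoint between presentable settings. This is the principal obstacle of the argument: one must first pin down the correct notion of semi-orthogonal decomposition in $\exact$ and then verify this preservation property, which requires some care about how extension-closed subcategories in the exact setting are mirrored by Verdier sequences after stabilisation.

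With these two steps in hand, assembling the proof is formal. For any additive, $\stab$-invariant $F\colon\exact\to\Sp$, we obtain
\[ \Nat_{\exact}(\Sigma^\infty\iota, F) \simeq \Nat_{\catst}(\Sigma^\infty\iota, \bar F) \simeq \Nat_{\catst}(\K, \bar F) \simeq \Nat_{\exact}(\K, F), \]
where the outer equivalences come from the first step (applied to $G=\Sigma^\infty\iota$ and $G=\K$ respectively) and the middle equivalence is \cite{bgt:univcharkt}. Chasing through the constructions, the composite equivalence is precisely precomposition with the canonical natural transformation $\Sigma^\infty\iota\to\K$ on $\exact$, which proves the asserted universal property.
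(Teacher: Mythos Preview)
Your argument is correct and follows the same route as the paper: transfer along the adjunction $\stab \dashv \inc$ to reduce to the Blumberg--Gepner--Tabuada universal property on $\catst$. However, you overstate what is needed. In your second step you only use the trivial direction---that $F$ additive on $\exact$ implies $\bar F = F\circ\inc$ additive on $\catst$---since that is all BGT requires; the converse, which you flag as ``the principal obstacle'' and which would require knowing that $\stab$ preserves semi-orthogonal decompositions, plays no role in the chain of equivalences you write down (the paper proves that preservation statement separately, after the universal property, as an independent observation). Likewise, Gillet--Waldhausen is irrelevant: as you yourself note, your restriction--extension equivalence holds for arbitrary $G$, so the $\stab$-invariance of $\K$ is never used.
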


Note that by \cite[Theorem~3.3]{saunier:heart}, being invariant under passage to the stable envelope is equivalent to
satisfying the resolution theorem (see \cite[Theorem~3]{quillen:higher-k} and \cite[Theorem~1.4]{saunier:heart})
and being invariant under weak-idempotent completion.
In particular, theorems 2 and 3 of Quillen's seminal paper \cite{quillen:higher-k} identify properties of the K-theory functor which allow for the formulation of a universal property.

Finally, let us mention that Keller's criterion implies that $\stab$ sends semi-orthogonal decompositions of exact $\infty$-categories to semi-orthogonal decompositions of stable $\infty$-categories.
In particular, it follows that the additive non-commutative motives of Blumberg, Gepner and Tabuada are a full subcategory of a similar construction for exact $\infty$-categories.

\addtocontents{toc}{\SkipTocEntry}
\subsection*{Conventions}
Throughout the body of this article, we adopt the following conventions:
\begin{itemize}
    \item From this point onwards, the word ``category'' means ``$\infty$-category''.
    \item We denote by $\An$ the category of anima/$\infty$-groupoids/spaces/weak homotopy types.
    \item  If we want to emphasise that a category has discrete mapping anima, we will refer to it as a ``1-category'' or ``ordinary category''.
    \item For an arbitrary category $\cC$, we denote by $\Hom_\cC$ the mapping anima of $\cC$.
    If $\cC$ is stable, we use $\hom_\cC$ to denote the mapping spectra of $\cC$.
    \item If $\cC$ is a category with finite products, we denote by $\Fun^\times(\cC,-)$ the full subcategory of functors preserving finite products.
    If $\cC$ is additive, we also use the notation $\Fun^\oplus(\cC,-)$.
    If $\cC$ and $\cD$ are stable categories, $\Fun^\exct(\cC,\cD)$ denotes the category of exact functors from $\cC$ to $\cD$.
    \item If $\cA$ and $\cB$ are wide subcategories of a category $\cC$, we say that \emph{$\cA$ is closed under pushouts along $\cB$} if every span $y \xleftarrow{f} x \xrightarrow{g} z$ with $f$ in $\cA$ and $g$ in $\cB$ can be completed to a pushout square, and for any such pushout square the induced morphism $z \to y \cup_x z$ also lies in $\cA$, and similarly for pullbacks.
\end{itemize}

\addtocontents{toc}{\SkipTocEntry}
\subsection*{Acknowledgements}
We thank Markus Land and Vova Sosnilo for comments on earlier versions of this article.
Moreover, we are grateful to the referees for their thorough report which helped to improve the manuscript in numerous places.

VS was supported by the European Research Council as part of the project \textit{Foundations of Motivic Real K-Theory} (ERC grant no. 949583).
CW was supported by the CRC 1085 ``Higher Invariants" funded by the Deutsche Forschungsgemeinschaft (DFG).


\section{Exact categories}\label[section]{sec:basics}

Let us recall the basic definitions concerning exact categories, see \cite[Section~3]{barwick:heart}.

\begin{defn}
    An \emph{exact category} is an additive category $\cE$ together with a choice of two wide subcategories $\ing\cE$ and $\pr\cE$, the \emph{(exact) inclusions} and \emph{(exact) projections}, such that the following properties are satisfied:
    \begin{enumerate}
        \item for every object $x \in \cE$, the unique morphism $0 \to x$ is an inclusion and the unique morphism $x \to 0$ is a projection;
        \item the class of inclusions is stable under pushouts along arbitrary maps, and the class of projections is stable under pullback along arbitrary maps;
        \item for a commutative square
        \[\begin{tikzcd}
            x\ar[r, "i"]\ar[d,"p"'] & y\ar[d, "q"] \\ x'\ar[r, "j"] & y'
        \end{tikzcd}\]
        the following are equivalent:
        \begin{enumerate}
            \item the morphism $i$ is an inclusion, the morphism $p$ is a projection, and the square is a pushout;
            \item the morphism $j$ is an inclusion, the morphism $q$ is a projection, and the square is a pullback;
        \end{enumerate}
        Any square of this form is called an \emph{exact square}.
        If $x' \simeq 0$, we also say that $x \xrightarrow{i} y \xrightarrow{q} y'$ is an \emph{exact sequence}.
    \end{enumerate}
    We will typically denote inclusions by feathered arrows $\rightarrowtail$ and projections by two-headed arrows $\twoheadrightarrow$.
    
   Let $(\cE',\ing\cE',\pr\cE')$ be another exact category.
   An \emph{exact functor} $f \colon \cE \to \cE'$ is a reduced functor which sends exact squares to exact squares.
   We denote by $\exact$ the category of small exact categories and exact functors.
   Moreover, we use $\Fun^\exct(\cE,\cE')$ to denote the category of exact functors from $\cE$ to $\cE'$.
\end{defn}

\begin{rem}\ 
\begin{enumerate}
    \item We will typically suppress the chosen subcategories $\ing\cE$ and $\pr\cE$ from notation.
    \item An exact functor in particular sends inclusions to inclusions and projections to projections.
\end{enumerate}
\end{rem}

There is a number of standard diagram lemmas that make it convenient to work with exact categories.
The most central one is the following.

\begin{lem}[{\cite[Proposition~A.1]{klemenc:stablehull} and \cite[Lemma~4.7]{barwick:heart}}]\label[lem]{lem:pushouts}
    Let $\cE$ be an exact category and consider a commutative square
    \[\begin{tikzcd}
        x\ar[r, rightarrowtail, "i"]\ar[d] & y\ar[d] \\
        x'\ar[r, rightarrowtail, "j"]& y'
    \end{tikzcd}\]
    in which $i$ and $j$ are inclusions as indicated.
    Then the following are equivalent:
    \begin{enumerate}
        \item the square is bicartesian;
        \item the square is a pushout;
        \item the induced map $\cofib(i) \to \cofib(j)$ on cofibres is an equivalence.
    \end{enumerate}
    Moreover, in this case there exists an exact sequence $x \rightarrowtail x' \oplus y \twoheadrightarrow y'$.
\end{lem}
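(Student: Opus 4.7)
The plan is to prove the cyclic chain $(\mathrm{i})\Rightarrow(\mathrm{ii})\Rightarrow(\mathrm{iii})\Rightarrow(\mathrm{ii})\Rightarrow(\mathrm{i})$ and to derive the moreover clause as a byproduct of the last implication. The first implication is tautological. For $(\mathrm{ii})\Rightarrow(\mathrm{iii})$ I would proceed by pasting pushouts: since $j$ is a pushout of the inclusion $i$, the exact category axioms ensure it is itself an inclusion, so $\cofib(j)$ exists as the pushout $y'\cup_{x'}0$. Stacking the given pushout square on top of the defining pushout for $\cofib(j)$, the pasting lemma for pushouts identifies the outer rectangle with the pushout $y\cup_x 0=\cofib(i)$, showing that the canonical comparison map $\cofib(i)\to\cofib(j)$ is an equivalence.

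For $(\mathrm{iii})\Rightarrow(\mathrm{ii})$, I would form the auxiliary pushout $\bar y:=y\cup_x x'$, which exists because inclusions are stable under pushouts. Its universal property supplies a canonical map $\phi\colon\bar y\to y'$ fitting into a morphism of exact sequences $(x'\rightarrowtail\bar y\twoheadrightarrow\cofib(i))\to(x'\rightarrowtail y'\twoheadrightarrow\cofib(j))$ which is the identity on $x'$ and, combining the previous step with $(\mathrm{iii})$, an equivalence on cofibres. A short-five-lemma argument then forces $\phi$ to be an equivalence, which is precisely the statement that the original square is a pushout.

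For $(\mathrm{ii})\Rightarrow(\mathrm{i})$ together with the moreover clause, I would directly construct the exact sequence $x\rightarrowtail x'\oplus y\twoheadrightarrow y'$ with first map $(f,-i)$ and second map $(j,g)$. That the first map is an inclusion can be seen by factoring it as
\[
x \xrightarrow{(f,\,\id_x)} x'\oplus x \xrightarrow{\id_{x'}\oplus(-i)} x'\oplus y,
\]
where the second factor is a direct sum of inclusions and the first factor is conjugate, via the shearing automorphism $(a,b)\mapsto(a+f(b),b)$ of $x'\oplus x$, to the summand inclusion $(0,\id_x)$, which is exact. The cofibre of the composite computes, by the standard description of pushouts in additive categories, as $x'\cup_x y$, which equals $y'$ by $(\mathrm{ii})$; this produces the exact sequence. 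Identifying $x$ with the fibre of $x'\oplus y\twoheadrightarrow y'$, i.e.\ with the pullback $x'\times_{y'}y$, then exhibits the square as bicartesian.

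The main obstacle I anticipate is the short-five-lemma step in $(\mathrm{iii})\Rightarrow(\mathrm{ii})$: in a stable category it is immediate because the fibre of an equivalence vanishes, but in a general exact category $\fib(\phi)$ need not exist, so the argument must remain inside $\cE$. A workable route is to test the map $\phi$ against every corepresentable $\Hom_\cE(w,-)$ and perform a classical five-lemma chase on the associated long exact sequences of anima attached to the two exact sequences; alternatively one can observe that $(\mathrm{ii})\Rightarrow$ moreover, once established, already yields $x\rightarrowtail x'\oplus y\twoheadrightarrow\bar y$, and uniqueness of cofibres along a common inclusion forces $\phi$ to be an equivalence under hypothesis $(\mathrm{iii})$.
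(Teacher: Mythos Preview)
The paper does not supply its own proof of this lemma; it simply cites Klemenc and Barwick. Your overall architecture is sound—the implications $(\mathrm{i})\Rightarrow(\mathrm{ii})$, $(\mathrm{ii})\Rightarrow(\mathrm{iii})$, and $(\mathrm{ii})\Rightarrow(\mathrm{i})$ together with the moreover clause are correct as written—but the step $(\mathrm{iii})\Rightarrow(\mathrm{ii})$ has a genuine gap, and neither of your two proposed routes closes it.

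For route 1, the long exact sequence attached to a fibre sequence of anima terminates at $\pi_0$ of the base, so the five-lemma chase only shows that $\phi_*\colon\Hom_\cE(w,\bar y)\to\Hom_\cE(w,y')$ is injective on $\pi_0$ (and bijective on higher homotopy); surjectivity on $\pi_0$ is not available. Thus you only learn that $\phi$ is a monomorphism. Dualising gives that $\phi$ is an epimorphism, but in an $\infty$-category mono $+$ epi does not imply equivalence—indeed, in a stable category with its maximal exact structure every morphism is simultaneously an inclusion and a projection.

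Route 2 is circular as stated: invoking ``uniqueness of cofibres along a common inclusion'' requires knowing that $y'$ is the cofibre of $(f,-i)\colon x\rightarrowtail x'\oplus y$, but your derivation of this fact in step 4 already uses $(\mathrm{ii})$.

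A clean fix stays within your framework. Under $(\mathrm{iii})$ alone, the composite $y\xrightarrow{g} y'\twoheadrightarrow\cofib(j)$ agrees with $y\twoheadrightarrow\cofib(i)\xrightarrow{\sim}\cofib(j)$ and is therefore a projection. Pulling back $y'\twoheadrightarrow\cofib(j)$ along it produces an exact sequence $x'\rightarrowtail P\twoheadrightarrow y$ which is split by the section $y\to P$ induced by $g$; hence $P\simeq x'\oplus y$, and the remaining structure map of the pullback becomes a projection $(j,g)\colon x'\oplus y\twoheadrightarrow y'$ with fibre $x$. Now both $\bar y$ and $y'$ are cofibres in $\cE$ of the same inclusion $(f,-i)$ (up to the harmless sign automorphism of $x$), so $\phi$ is an equivalence. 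This is precisely the content you gestured at in route 2, with the missing ingredient being the independent construction of the exact sequence ending in $y'$ from hypothesis $(\mathrm{iii})$ rather than $(\mathrm{ii})$.
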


\cref{lem:pushouts} implies that a functor between exact categories is exact if and only if it preserves exact sequences.
This in turn is equivalent to requiring that the functor is an exact functor between the underlying Waldhausen categories \cite[Proposition~4.8]{barwick:heart}.

Moreover, a version of the five lemma holds for inclusions and projections in an exact category.

\begin{lem}\label[lem]{lem:five-lemma}
 Let $\cE$ be an exact category and consider a commutative diagram
 \[\begin{tikzcd}
     x\ar[r,tail]\ar[d, tail, "f"] & y\ar[r, two heads]\ar[d, "g"] & z\ar[d, "h"] \\
     x'\ar[r, tail] & y'\ar[r, two heads] & z'
 \end{tikzcd}\]
 with both rows exact. Assume that $f$ is an inclusion.
 Then $h$ is an inclusion if and only if the induced map $x' \cup_x y \to y'$ is an inclusion.
 If this is the case, $g$ is also an inclusion.
\end{lem}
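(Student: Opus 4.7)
The plan is to form the pushout $P := y \cup_x x'$ in $\cE$, which exists because $i$ is an inclusion (axiom~(ii)), and to recast both ``$h$ is an inclusion'' and ``the induced map $\phi \colon P \to y'$ is an inclusion'' as consequences of a single pair of exact sequences built from this pushout. First I would observe that axiom~(ii) produces inclusions $\beta \colon x' \rightarrowtail P$ and $\alpha \colon y \rightarrowtail P$ as pushouts of $i$ and $f$. Applying \cref{lem:pushouts} together with axiom~(iii) to the pushout of $\beta$ along $x' \to 0$ yields an exact sequence $x' \rightarrowtail P \twoheadrightarrow z$, where the projection $\pi$ is determined via the universal property of $P$ by $p \colon y \twoheadrightarrow z$ and $0 \colon x' \to z$. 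The same universal property produces a canonical map $\phi \colon P \to y'$ with $\phi \circ \alpha = g$ and $\phi \circ \beta = j$, giving a comparison diagram with exact rows
\[\begin{tikzcd}
    x' \ar[r, tail, "\beta"] \ar[d, equal] & P \ar[r, two heads, "\pi"] \ar[d, "\phi"] & z \ar[d, "h"] \\
    x' \ar[r, tail, "j"] & y' \ar[r, two heads, "q"] & z'.
\end{tikzcd}\]

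For the forward direction, if $\phi$ is an inclusion, then $j = \phi \circ \beta$ presents a composition of two composable inclusions $x' \rightarrowtail P \rightarrowtail y'$, and the Noether isomorphism for exact categories (pasting of exact sequences, derivable from \cref{lem:pushouts} and axiom~(iii)) provides an exact sequence $z \rightarrowtail z' \twoheadrightarrow \cofib(\phi)$, exhibiting $h$ as an inclusion; moreover, $g = \phi \circ \alpha$ is automatically a composite of inclusions. For the converse, if $h$ is an inclusion, I would form the pullback $Q := y' \times_{z'} z$ (which exists because $q$ is a projection), and axiom~(iii) identifies this pullback as an exact square, so $Q \rightarrowtail y'$ is an inclusion and $Q$ fits into an exact sequence $x' \rightarrowtail Q \twoheadrightarrow z$. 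The universal properties of $P$ and $Q$ then yield a canonical comparison map $P \to Q$ restricting to the identity on $x'$ and on $z$, and the short five lemma for exact $\infty$-categories (provable from \cref{lem:pushouts} applied to cofibre squares in the underlying $\infty$-category) implies this map is an equivalence. Hence $\phi$ factors as $P \xrightarrow{\sim} Q \rightarrowtail y'$, and $\phi$ is an inclusion.

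The main obstacle is establishing the two auxiliary facts used above --- ``the pushout of an exact sequence along an arbitrary morphism is exact'' and ``the short five lemma for exact $\infty$-categories'' --- directly from the axioms of \cref{sec:basics}, without circular reliance on downstream results such as the Gabriel--Quillen embedding. Both reductions ultimately amount to tracking cofibres through the exact squares produced by axioms~(ii) and~(iii), with \cref{lem:pushouts} providing the bookkeeping.
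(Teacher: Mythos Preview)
Your proposal is correct and follows essentially the same route as the paper, which simply defers to B\"uhler's \cite[Proposition~3.1 \& Corollary~3.2]{buehler:exact-cats}: form the pushout $P = x' \cup_x y$, compare the exact sequence $x' \rightarrowtail P \twoheadrightarrow z$ to the bottom row, and reduce both directions to the Noether isomorphism and the short five lemma. Your identification of the two auxiliary facts and their derivation from \cref{lem:pushouts} is exactly the bookkeeping B\"uhler carries out (and is what the paper means by ``copied almost verbatim''); in particular, the short five lemma is immediate from \cref{lem:pushouts}(iii)$\Rightarrow$(i) once the outer maps are equivalences, so the ``main obstacle'' you flag is not a genuine obstruction.
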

\begin{proof}
 The proofs of \cite[Proposition~3.1 \& Corollary~3.2]{buehler:exact-cats} can be copied almost verbatim.
\end{proof}

Another easy consequence is Quillen's ``obscure axiom''.

\begin{lem}\label[lem]{lem:obscure-axiom}
    Let $\cE$ be an exact category.
    Suppose that $p \colon x \to y$ is a morphism in $\cE$ which admits a fibre and that there exists a morphism $f \colon x' \to x$ such that $pf$ is a projection in $\cE$.
    Then $p$ is a projection.
\end{lem}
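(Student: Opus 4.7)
The plan is to follow Quillen's classical argument by forming a pullback that yields a concrete presentation of the fibre, and then invoking Klemenc's Gabriel--Quillen embedding to translate the desired exactness into a fibre computation in the stable envelope.

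Since $q := pf$ is an exact projection, axiom (ii) produces the pullback $P := x \times_y x'$ in $\cE$ and exhibits the projection $\pi_1 \colon P \twoheadrightarrow x$ as an exact projection with $\fib(\pi_1) \simeq \fib(q) =: k'$. The universal property of the pullback applied to the equality $pf = q \circ \mathrm{id}_{x'}$ provides a section $(f, \mathrm{id}_{x'}) \colon x' \to P$ of the other projection $\pi_2 \colon P \to x'$. Writing $k := \fib(p)$ for the hypothesised fibre, the semiadditive structure on $\cE$ together with the existence of $k$ produces a canonical direct-sum decomposition $P \simeq x' \oplus k$ under which $\pi_2$ becomes the projection onto the first summand and $\pi_1$ becomes $(f, \iota_k)$; in particular $\pi_2$ is itself an exact projection.

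Now pass to the stable envelope via Klemenc's fully faithful extension-closed embedding $j \colon \cE \to \stab(\cE)$, which detects exact sequences. The exact sequences $k \rightarrowtail x' \oplus k \twoheadrightarrow x'$ and $k' \rightarrowtail x' \oplus k \twoheadrightarrow x$ become cofibre sequences in $\stab(\cE)$, while the map $p\pi_1 = q\pi_2 \colon x' \oplus k \to y$ corresponds under the decomposition to $(q, 0)$, whose fibre in $\stab(\cE)$ is directly computed to be $k' \oplus k$. Combining this computation with the long fibre sequence $\fib(\pi_1) \to \fib(p\pi_1) \to \fib_{\stab(\cE)}(p)$ attached to the composition $p\pi_1$, and using $\fib(\pi_1) \simeq k'$, forces $\fib_{\stab(\cE)}(p) \simeq k$.

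Hence $k \to x \to y$ is a cofibre sequence in $\stab(\cE)$ all of whose objects lie in $\cE$; by Klemenc's detection of exact sequences this diagram is an exact sequence in $\cE$, which is to say that $p$ is an exact projection. The hardest step is producing the decomposition $P \simeq x' \oplus k$, since this is where the hypothesis that $p$ admits a fibre enters essentially: without an object in $\cE$ serving as $\fib(\pi_2)$ the split epimorphism $\pi_2$ need not be realisable as a projection in $\cE$. Everything else reduces to a routine fibre computation in a stable $\infty$-category.
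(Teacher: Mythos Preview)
Your argument contains a genuine circularity. The final step invokes the fact that $j \colon \cE \to \stab(\cE)$ \emph{detects} exact sequences, i.e.\ that a cofibre sequence in $\stab(\cE)$ whose terms all lie in $\cE$ is already exact in $\cE$. But the proof of this detection statement---both in \cite[Proposition~4.25]{klemenc:stablehull} (which rests on Klemenc's Proposition~A.3) and in the direct argument given in this paper for \cref{thm:gabriel-quillen}---proceeds precisely by producing a morphism $g$ with $pg$ a projection and then appealing to the obscure axiom to conclude that $p$ is one. So you are invoking the very lemma you are trying to prove.

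The paper's proof is the elementary one of \cite[Proposition~2.16]{buehler:exact-cats}, and your first paragraph is in fact its opening move: form the pullback $P$, split it as $P \simeq x' \oplus k$ using the section of $\pi_2$ together with $\ker(\pi_2) \simeq k$, and recognise $\pi_1 \simeq (f,\iota_k) \colon x' \oplus k \twoheadrightarrow x$ as a projection. The correct finish stays inside $\cE$. Since $p\pi_1 = (q,0) = q \circ \pr_{x'}$ is also a projection, with $\fib(p\pi_1) \simeq k' \oplus k$, one checks that the induced map $\fib(\pi_1) \to \fib(p\pi_1)$ identifies with a split inclusion $k' \rightarrowtail k' \oplus k$ of the form $(\id,\psi)$. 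Applying the Noether lemma to the nested inclusions $k' \rightarrowtail k' \oplus k \rightarrowtail x' \oplus k$ then yields the exact sequence $k \rightarrowtail x \twoheadrightarrow y$, and the projection in this sequence is $p$. No appeal to $\stab(\cE)$ is needed; your second and third paragraphs should be replaced by this elementary step.
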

\begin{proof}
    The proof of \cite[Proposition~2.16]{buehler:exact-cats} can be copied almost verbatim.
\end{proof}

It will often be convenient to assume the following additional property on the underlying additive category of an exact category.

\begin{defn}
    An additive category $\cA$ is \emph{weakly idempotent complete} if for every retraction diagram $i \colon a \rightleftarrows b \cocolon r$ there exists an object $a' \in \cA$ and a morphism $s \colon b \to a'$ such that $r + s \colon b \to a \oplus a'$ is an equivalence.
\end{defn}

\begin{ex}
\ \begin{enumerate}
 \item Every idempotent complete additive category is weakly idempotent complete.
 \item Every stable category is weakly idempotent complete.
 \item For an ordinary ring $R$, the additive $1$-category of finitely generated, stably free $R$-modules is weakly idempotent complete.
 If there exists a stably free $R$-module which is not free, it follows that the category of finitely generated free $R$-modules is not weakly idempotent complete.

 Since the idempotent completion of both of these categories is the category of finitely generated projective $R$-modules, the category of finitely generated, stably free $R$-modules is idempotent complete if and only if every finitely generated projective $R$-module is stably free. This is equivalent to the statement that $\widetilde{K}_0(R) := \coker(K_0(\bbZ) \to K_0(R))$ is trivial.
 \item The heart of a heart category $(\cC,\cC_{\geq 0},\cC_{\leq 0})$ is weakly idempotent complete: every retraction diagram in $\cC^\heartsuit$ admits a complement in $\cC$.
 The resulting complement is both a fibre of a morphism in $\cC_{\leq 0}$ and a cofibre of a morphism in $\cC_{\geq 0}$, so it lies in $\cC^\heartsuit$ as well.
\end{enumerate}
\end{ex}

The following notion provides an easy way to build new exact categories from a given exact category.

\begin{defn}
    Let $\cE$ be an exact category and let $\cU$ be a full subcategory.
    Then $\cU$ is \emph{extension-closed} in $\cE$ if it contains a zero object and for every exact sequence $u \rightarrowtail y \twoheadrightarrow w$ with $u, w \in \cU$, we also have $y \in \cU$.
\end{defn}

\begin{lem}\label[lem]{lem:ext-closed-subcat}
    If $\cU$ is an extension-closed subcategory of the exact category $\cE$, then
    \[ \ing\cU := \{ i \in \cU \cap \ing\cE \mid \cofib(i) \in \cU \}\quad\text{and}\quad \pr\cU := \{ p \in \cU \cap \pr\cE \mid \fib(p) \in \cU \} \]
    define the structure of an exact category on $\cU$.
\end{lem}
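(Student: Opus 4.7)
The plan is to verify the three axioms defining an exact structure on $\cU$ by leveraging the ambient structure on $\cE$, with the closure property built into extension-closure doing all the work whenever we need an auxiliary object to lie in $\cU$.

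First I would establish that $\cU$ is additive. A zero object is present by hypothesis, and closure under direct sums follows from applying extension-closure to the split exact sequences $u \rightarrowtail u \oplus u' \twoheadrightarrow u'$ for $u, u' \in \cU$; the ambient additive structure then restricts to $\cU$. Axiom (i) of the definition of an exact category is immediate: for any $x \in \cU$ the morphism $0 \to x$ lies in $\ing\cE$ with cofibre $x \in \cU$, hence in $\ing\cU$, and dually for projections.

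For axiom (ii), I would take an inclusion $i \colon u \rightarrowtail u'$ in $\ing\cU$ and an arbitrary map $u \to v$ in $\cU$. The pushout $u' \cup_u v$ exists in $\cE$; by \cref{lem:pushouts} the pushed-out map $j \colon v \to u' \cup_u v$ lies in $\ing\cE$ and satisfies $\cofib(j) \simeq \cofib(i)$, which lies in $\cU$ because $i \in \ing\cU$. Hence $u' \cup_u v$ fits into an exact sequence $v \rightarrowtail u' \cup_u v \twoheadrightarrow \cofib(i)$ whose outer terms lie in $\cU$, so by extension-closure $u' \cup_u v \in \cU$. Fullness of $\cU \subseteq \cE$ then forces the square to be a pushout in $\cU$, and $j \in \ing\cU$. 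The corresponding statement for pullbacks of projections follows by the formally dual argument.

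Axiom (iii) is where one must keep book on what "pushout in $\cU$" and "pullback in $\cU$" mean, but once this is clarified it reduces to the same axiom in $\cE$. Suppose a commutative square in $\cU$ has $i \in \ing\cU$, $p \in \pr\cU$, and is a pushout in $\cU$. By the previous paragraph the pushout formed in $\cE$ also lies in $\cU$, so the two pushouts agree and the square is exact in $\cE$; thus $j \in \ing\cE$, $q \in \pr\cE$, and the square is a pullback in $\cE$. Using \cref{lem:pushouts} and its dual one identifies $\cofib(j) \simeq \cofib(i) \in \cU$ and $\fib(q) \simeq \fib(p) \in \cU$, placing $j$ in $\ing\cU$ and $q$ in $\pr\cU$; the pullback in $\cE$ is automatically a pullback in $\cU$ by fullness. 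The reverse implication follows by the dual argument.

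There is no serious obstacle here: the argument is an exercise in translating the $\cE$-level axioms to $\cU$, with extension-closure guaranteeing that all (co)fibres and pushout/pullback apexes one constructs along the way actually land in $\cU$. The only point requiring attention is the identification of pushouts formed in $\cU$ with pushouts formed in $\cE$, which is automatic from full faithfulness of the inclusion $\cU \hookrightarrow \cE$ as soon as the ambient pushout is known to be in $\cU$.
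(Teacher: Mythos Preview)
Your proposal is correct and follows essentially the same approach as the paper: both arguments use extension-closure together with \cref{lem:pushouts} to show that pushouts along $\ing\cU$ (and dually pullbacks along $\pr\cU$) formed in $\cE$ land in $\cU$, from which the axioms follow. The paper is terser, observing that it suffices to establish these closure properties and leaving the verification of axiom~(iii) implicit, whereas you spell this out; but the content is the same.
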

\begin{proof}
    Since $\cU$ is extension-closed, it is a full additive subcategory of $\cE$.
    For the remaining axioms, it suffices to show that $\cU$ is closed under pushouts along morphisms in $\ing\cU$ and under pullbacks along morphisms in $\pr\cU$.
    If $w \leftarrow u \xrightarrow{i} v$ is a span in $\cU$ with $i \in \ing\cU$, then the pushout $y$ fits into an exact sequence $w \rightarrowtail y \twoheadrightarrow \cofib(i)$ in $\cE$.
    Since $\cU$ is closed under extensions, it follows that $y \in \cU$.
    The case of pullbacks along morphisms in $\pr\cU$ is analogous.
\end{proof}

If we do not say anything else, extension-closed subcategories will always be equipped with the exact structure coming from \cref{lem:ext-closed-subcat}.

We will also be interested in the following property that an extension-closed subcategory may or may not satisfy.

\begin{defn}
    Let $\cE$ be an exact category and let $\cU$ be an extension-closed subcategory.
    Then $\cU$ is \emph{closed under fibres of projections} if for every exact sequence $x \rightarrowtail v \twoheadrightarrow w$ with $v,w \in \cU$, we also have $x \in \cU$.
\end{defn}


\section{\texorpdfstring{The Gabriel--Quillen embedding}{The Gabriel-Quillen embedding}}\label[section]{sec:gabriel-quillen}

Let us begin by providing more details of Klemenc's construction of the Gabriel--Quillen embedding.
If $\cA$ is additive, the functor
\[ \Omega^\infty \colon \Fun^\times(\cA^\op,\Sp_{\geq 0}) \to \Fun^\times(\cA^\op,\An) \]
is an equivalence by \cite[Corollary~4.9]{gepner-groth-nikolaus:multiplicative-loop-space-machines} and provides a lift
\[ \yo \colon \cA \to \Fun^\times(\cA^\op,\Sp_{\geq 0}) =: \pshadd(\cA) \]
of the Yoneda embedding to connective spectra.
Recall that $\pshadd^\f(\cA)$ denotes the category of \emph{finite additive presheaves}, the smallest full subcategory of $\pshadd(\cA)$ which contains the essential image of the Yoneda embedding and is closed under finite colimits.
It is prestable by \cite[Proposition~3.5]{klemenc:stablehull} and the upgraded Yoneda embedding $\yo \colon \cA \to \pshadd^\f(\cA)$ is the initial additive functor from $\cA$ with finitely cocomplete target \cite[Proposition~3.2]{klemenc:stablehull}.

Recall for example from \cite[Section~C.1.1]{SAG} that the fully faithful inclusion $\catst \to \catrex$ into the category of small, pointed categories with finite colimits and finite colimit-preserving functors admits a left adjoint
\[ \SW \colon \catrex \to \catst \]
called the \emph{Spanier--Whitehead stabilisation}.
If $\cC$ is a pointed category with finite colimits, its Spanier--Whitehead stabilisation is explicitly given by
\[ \SW(\cC) \simeq \colim \big( \cC \xrightarrow{\Sigma} \cC \xrightarrow{\Sigma} \cC \xrightarrow{\Sigma} \ldots \big). \]

\begin{defn}
    Define
    \[ \stabadd(\cA) := \SW(\pshadd^\f(\cA)) \]
    as the Spanier--Whitehead stabilisation of the category of finite additive presheaves. 
\end{defn}

By the universal property of $\SW$, the stable category $\stabadd(\cA)$ comes equipped with a functor $\cA \to \stabadd(\cA)$ such that the restriction functor
    \[ \Fun^\exct(\stabadd(\cA),\cC) \to \Fun^\oplus(\cA,\cC) \]
    is an equivalence for every stable category $\cC$.

In particular, there exists an adjunction
\[ \stabadd \colon \catadd \rightleftarrows \catst \cocolon \fgt \]
where $\fgt$ denotes the forgetful functor.
Since the inclusion functor $\Sp_{\geq 0} \to \Sp$ preserves colimits, we obtain an induced fully faithful functor
\[ \stabadd(\cA) \to \Fun^\times(\cA^\op,\Sp) \]
which identifies $\stabadd(\cA)$ with the smallest full stable subcategory of $\Fun^\times(\cA^\op,\Sp)$ containing each of the spectrum-valued presheaves $\yo(x) \colon \cA^\op \rightarrow \Sp_{\geq 0} \subseteq \Sp$ for $x \in \cA$.

An important property of $\stabadd(\cA)$ is the following.

\begin{prop}[Bondarko, Sosnilo]\label[prop]{prop:weight-structure}
    Let $\cA$ be a small additive category.
    Then $\stabadd_{\geq 0}(\cA) := \pshadd^\f(\cA) \subseteq \stabadd(\cA)$ is the positive part of a bounded weight structure on $\stabadd(\cA)$.
\end{prop}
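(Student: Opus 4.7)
Let $\cC := \stabadd(\cA)$. My plan is to set $\cC_{\geq 0} := \pshadd^\f(\cA) \subseteq \cC$ and to obtain the negative part via the orthogonality formula
\[ \cC_{\leq 0} := \{ y \in \cC \mid \hom_\cC(y, z) \in \Sp_{\geq 0} \text{ for all } z \in \cC_{\geq 0} \}, \]
after which axiom (iii) of a weight structure and the closure of $\cC_{\leq 0}$ under retracts, extensions, and finite limits become automatic. As a warm-up, I would verify that each representable $\yo(a)$ lies in both halves: it is in $\cC_{\geq 0}$ by definition, and it lies in $\cC_{\leq 0}$ because the functor $\hom_\cC(\yo(a), -)$ is exact and agrees with evaluation at $a$, which lands in $\Sp_{\geq 0}$ on all of $\pshadd^\f(\cA) \subseteq \Fun^\times(\cA^\op, \Sp_{\geq 0})$. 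In particular $\Sigma^{-k}\yo(a) \in \cC_{\leq 0}$ and $\Sigma^k\yo(a) \in \cC_{\geq 0}$ for all $k \geq 0$.

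For the weight decomposition axiom and boundedness, I would exploit the explicit colimit formula for $\SW$: every $x \in \cC$ is equivalent to $\Sigma^{-n} w$ for some $w \in \pshadd^\f(\cA)$ and $n \geq 0$, which already gives $\cC = \bigcup_{n \geq 0} \cC_{\geq -n}$. Because $\pshadd^\f(\cA)$ is the closure of the representables under finite colimits, $w$ admits a finite filtration $0 = w_0 \to \cdots \to w_k = w$ whose successive cofibres are shifts $\Sigma^{m_i}\yo(a_i)$ with $m_i \geq 0$. Desuspending this filtration by $n$ and applying Bondarko's weight gluing lemma to the cells with $m_i - n \geq 1$ (which lie in $\cC_{\geq 1}$) and those with $m_i - n \leq 0$ (which lie in $\cC_{\leq 0}$) produces the desired cofibre sequence $x_{\leq 0} \to x \to \Sigma x_{\geq 0}$; the same filtration shows $x \in \cC_{\leq n-1}$, completing boundedness.

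The main obstacle is closure of $\cC_{\geq 0} = \pshadd^\f(\cA)$ under retracts in $\cC$, which is not formal since $\pshadd^\f(\cA)$ is defined only as a finite-colimit closure of the representables. The bootstrapping plan is as follows: the previous steps show that $(\cC_{\geq 0}, \cC_{\leq 0})$ satisfies everything except possibly retract closure of $\cC_{\geq 0}$, so replacing $\cC_{\geq 0}$ by its closure under retracts in $\cC$ yields a bounded weight structure in the sense of \cref{rem:weight-structures}. To identify this retract closure with $\pshadd^\f(\cA)$ itself, I would argue that any retract in $\cC$ of an object $w \in \pshadd^\f(\cA)$ already lies in $\pshadd^\f(\cA)$; equivalently, that $\pshadd^\f(\cA)$ is closed under retracts inside $\Fun^\times(\cA^\op, \Sp_{\geq 0})$. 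This last assertion is intimately connected to Sosnilo's identification of the weight heart with the \emph{weak} idempotent completion of $\cA$ (rather than the full idempotent completion), and I expect it to require a careful analysis of idempotents in $\pshadd^\f(\cA)$ following the methods of \cite{sosnilo:heart}.
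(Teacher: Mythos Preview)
There is a genuine gap in your treatment of retract closure of $\cC_{\geq 0} = \pshadd^\f(\cA)$. The two statements you call equivalent are not: closure under retracts in $\cC = \stabadd(\cA)$ is strictly weaker than closure under retracts in $\Fun^\times(\cA^\op,\Sp_{\geq 0})$, and the latter is \emph{false}. Since $\Fun^\times(\cA^\op,\Sp_{\geq 0})$ is idempotent complete, retract closure there would force $\pshadd^\f(\cA)$ to contain every idempotent summand of every representable, i.e.\ the full idempotent completion of $\cA$---contradicting exactly the phenomenon you flag at the end (the heart is only the \emph{weak} idempotent completion). What you actually need is that any summand of $Y \in \pshadd^\f(\cA)$ which \emph{already lies in} $\stabadd(\cA)$ belongs to $\pshadd^\f(\cA)$, and the paper obtains this in one line from the prestability of $\pshadd^\f(\cA)$. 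Unpacked: if $Y \simeq X \oplus X'$ in $\stabadd(\cA)$, pick $n$ with $\Sigma^n X \in \pshadd^\f(\cA)$ (possible by the $\SW$-colimit description); then $\Sigma^n X' \simeq \cofib(\Sigma^n X \hookrightarrow \Sigma^n Y)$ lies in $\pshadd^\f(\cA)$, as does $\cofib(e) \simeq X' \oplus \Sigma X'$ for the idempotent $e$ on $Y$ projecting to $X$. Now peel off suspensions via $\Sigma^{k-1}X' \simeq \cofib\big(\Sigma^k X' \hookrightarrow \Sigma^{k-1}(X' \oplus \Sigma X')\big)$ to get $X' \in \pshadd^\f(\cA)$, whence $X \simeq \cofib(X' \hookrightarrow Y) \in \pshadd^\f(\cA)$.

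Your cell-filtration route to weight decompositions is a legitimate alternative to the paper's iterative $\pi_0$-surjection argument, but the inductive gluing along the filtration implicitly uses that $\cC_{\geq 0}$ is closed under \emph{extensions} in $\cC$, which you have not addressed; this is again a consequence of prestability. Finally, the bound $x \in \cC_{\leq n-1}$ is incorrect---the upper bound depends on $\max_i m_i$, not only on $n$---though the qualitative boundedness claim does follow from your filtration once this is fixed.
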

\begin{proof}
    This is part of the proof of \cite[Corollary~3.4]{sosnilo:heart} and is discussed in more detail in \cite[Proposition~2.13]{saunier:heart}.
    For the reader's convenience, we also record a proof here.

    Since $\stabadd(\cA)$ is the Spanier--Whitehead stabilisation of $\stabadd_{\geq 0}(\cA)$,  every object becomes an object of $\stabadd_{\geq 0}(\cA)$ after a finite number of suspensions.
    Define
    \[ \stabadd_{\leq 0}(\cA) := \{ X \in \stabadd(\cA) \mid \hom_{\stabadd(\cA)}(X,Y) \text{ is connective for all } Y \in \stabadd_{\geq 0}(\cA) \}. \]
    Note that every representable presheaf is contained in $\stabadd_{\leq 0}(\cA)$.
    Moreover, if $X$ is the cofibre of a morphism between two objects in $\stabadd_{\leq 0}(\cA)$, then $\Sigma^{-1}X \in \stabadd_{\leq 0}(\cA)$.
    In particular, every object in $\stabadd_{\geq 0}(\cA)$ becomes an object of $\stabadd_{\leq 0}(\cA)$ after a finite number of desuspensions because the representable presheaves generate $\stabadd_{\geq 0}(\cA)$ under finite colimits.
    As every object is some iterated desuspension of an object in $\stabadd_{\geq 0}(\cA)$, the same assertion is true for every object in $\stabadd(\cA)$. 
    
    Since $\stabadd_{\geq 0}(\cA)$ is prestable, both $\stabadd_{\geq 0}(\cA)$ and $\stabadd_{\leq 0}(\cA)$ are closed under retracts.
    Since the orthogonality relation~\ref{rem:weight-structures}~\eqref{it:weight-3} holds by definition, we are left with checking that every object admits a weight decomposition.
    For this, it will be sufficient to show that for every $X \in \stabadd_{\geq 0}(\cA)$ and every $k \geq 0$, there exists a weight decomposition of degree $k$, by which we mean a cofibre sequence $X' \to X \to X''$ with $\Sigma^{-k}X' \in \stabadd_{\leq 0}(\cA)$ and $\Sigma^{-k-1}X'' \in \stabadd_{\geq 0}(\cA)$.
    
    Let $\cC$ be the full subcategory of $\stabadd(\cA)$ consisting of those objects which admit a $\pi_0$-surjection $\yo(x) \to X$ for some $x \in \cA$.
    Then every representable presheaf is contained in $\cC$.
    For any span $X_1 \leftarrow X_0 \to X_2$ of objects in $\cC$, the induced morphism $X_1 \oplus X_2 \to X_1 \cup_{X_0} X_2$ is a $\pi_0$-surjection, so $\cC$ is closed under finite colimits.
    
    In particular, $\stabadd_{\geq 0}(\cA) \subseteq \cC$.
    Consequently, for every object $X \in \stabadd_{\geq 0}(\cA)$ there exist some $x \in \cA$ and a cofibre sequence $\yo(x) \to X \to Y$ with $\Sigma^{-1}Y \in \stabadd_{\geq 0}(\cA)$.

    Assume that $X' \to X \to X''$ is a weight decomposition of degree $k$.
    Then there exists a $\pi_k$-surjection $\Sigma^k\yo(x'') \to \Sigma^{-1}X''$ for some $x'' \in \cA$.
    Letting $Y$ denote the cofibre of this morphism, we obtain another cofibre sequence
    \[ Y \to Y \cup_{\Sigma^{-1}X''} X' \to X. \]
    Since $\yo(x'')$ and $\Sigma^{-k}X'$ both lie in $\stabadd_{\leq 0}(\cA)$, it follows that $\Sigma^{-k-1}(Y \cup_{\Sigma^{-1}X''} X') \in \stabadd_{\leq 0}(\cA)$.
    By construction, $\Sigma^{-k-1}Y \in \stabadd_{\geq 0}(\cA)$, so rotating the above fibre sequence yields a weight decomposition of degree $k+1$.
    By induction, we conclude that every object in $\stabadd_{\geq 0}(\cA)$ admits a weight decomposition of each degree.
\end{proof}

\begin{rem}
    Using Bondarko's weight complex functor, which is by definition the exact functor $\stabadd(\cA) \to \cK^\mathrm{b}(\mathrm{ho}\cA)$ induced by the projection onto the homotopy category, one can check directly that the heart of the weight structure in \cref{prop:weight-structure} is a weak idempotent completion of $\cA$. This assertion also follows from the more general \cref{prop:stab-bounded-heart-structure} below.
\end{rem}

Assume now that $\cE$ is an exact category.
Forgetting the exact structure on $\cE$, it still makes sense to consider the additive stabilisation $\stabadd(\cE)$.
Recall from the introduction the definition of the object $E(i)$ associated to an exact inclusion $i$ in $\cE$.
Then an exact functor $\stabadd(\cE)\to\cC$ restricts to an exact functor $\cE \to \cC$ if and only if it maps each $E(i)$ to zero.

\begin{defn}\
\begin{enumerate}
    \item The category $\Ac(\cE)$ of \emph{acyclics} over $\cE$ is the smallest full stable subcategory of $\stabadd(\cE)$ which contains all objects of the from $E(i)$.
    \item Define $\stab(\cE)$, the \textit{stable envelope of $\cE$}, as the Verdier quotient
    \[ \stab(\cE) := \stabadd(\cE)/\Ac(\cE). \]
    By construction, there is an exact functor $j \colon \cE \to \stab(\cE)$.
\end{enumerate}
\end{defn}

The following can be assembled from \cite[Proposition~4.17]{klemenc:stablehull} and \cite[Proposition~4.22]{klemenc:stablehull}.

\begin{theorem}[Klemenc]\label[theorem]{thm:klemenc}
    Let $\cE$ be an exact category. Then $j \colon \cE \to \stab(\cE)$ is fully faithful and the restriction functor
    \[ j^* \colon \Fun^\exct(\stab(\cE),\cC) \to \Fun^\exct(\cE,\cC) \]
    is an equivalence for every stable category $\cC$.
    
    In particular,
    $\stab$ upgrades to a functor which takes part in an adjunction
    \[\begin{tikzcd}
        \exact\ar[r, shift left=1, "\stab"] & \catst\ar[l, shift left=1].
    \end{tikzcd}\]
    where the right adjoint is the fully faithful inclusion equipping a stable category with the maximal exact structure.
\end{theorem}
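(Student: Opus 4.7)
The plan is to reduce both claims to an easy check on the generators $E(i)$ of $\Ac(\cE)$; the adjunction then follows formally. Write $q \colon \stabadd(\cE) \to \stab(\cE)$ for the quotient projection.

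For the universal property, combine the universal property of the Verdier quotient with that of $\stabadd$: for any stable $\cC$, an exact functor $\stab(\cE) \to \cC$ corresponds to an exact functor $F \colon \stabadd(\cE) \to \cC$ with $F|_{\Ac(\cE)} \simeq 0$, equivalently $F(E(i)) \simeq 0$ for every exact inclusion $i$; and this in turn corresponds to an additive functor $F_0 := F \circ \yo \colon \cE \to \cC$. Using $E(i) = \cofib(\cofib(\yo(i)) \to \yo(z))$ and the exactness of $F$, the condition $F(E(i)) \simeq 0$ unpacks to the requirement that $F_0(x) \to F_0(y') \to F_0(z)$ is a cofibre sequence in $\cC$ for each exact sequence $x \rightarrowtail y' \twoheadrightarrow z$ in $\cE$; since $\cC$ carries its maximal exact structure, this is exactly the exactness of $F_0$ as a functor of exact categories.

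For fully faithfulness of $j$, the Yoneda embedding $\yo \colon \cE \to \stabadd(\cE) \subseteq \Fun^\times(\cE^\op, \Sp)$ is itself fully faithful, so it suffices to show that $q$ preserves the mapping spectra out of representables. Standard Verdier quotient theory reduces this to showing that each $\yo(y)$ is right-orthogonal to $\Ac(\cE)$; and since the class of $A \in \stabadd(\cE)$ with $\hom_{\stabadd(\cE)}(A, \yo(y)) \simeq 0$ is closed under shifts, cofibres and retracts, it suffices to verify the vanishing on the generators $A = E(i)$. Unwinding the two defining cofibre sequences $\yo(x) \to \yo(y') \to \cofib(\yo(i))$ and $\cofib(\yo(i)) \to \yo(z) \to E(i)$, together with the Yoneda identification $\hom_{\stabadd(\cE)}(\yo(a), \yo(b)) \simeq \yo(b)(a)$, realises $\hom_{\stabadd(\cE)}(E(i), \yo(y))$ as the total fibre of the square
\[
\begin{tikzcd}
\Hom_\cE(z, y) \ar[r] \ar[d] & \Hom_\cE(y', y) \ar[d] \\
* \ar[r] & \Hom_\cE(x, y),
\end{tikzcd}
\]
which is the image under the representable functor $\Hom_\cE(-, y) \colon \cE^\op \to \An$ of the defining exact square of $i$. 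By Barwick's axiom (iii), the latter is a pushout in $\cE$, so its image is a pullback of anima, and the total fibre vanishes.

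The main potential obstacle is establishing the mapping-spectrum formula in the Verdier quotient that underlies the right-orthogonality criterion. Once this foundational tool is available, both parts reduce to clean unravellings of the definitions, and the final adjunction follows formally: the universal property exhibits $\stab$ as left adjoint to the fully faithful inclusion $\catst \hookrightarrow \exact$.
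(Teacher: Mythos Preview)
Your argument for the universal property is correct; the paper itself does not prove this theorem but simply cites Klemenc's article.

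However, your argument for full faithfulness contains a genuine error: the claim that $\hom_{\stabadd(\cE)}(E(i),\yo(y)) \simeq 0$ is false in general. The square you display is a square of mapping \emph{anima}, and it is indeed a pullback in $\An$ because $z$ is the pushout of $0 \leftarrow x \rightarrowtail y'$ in $\cE$, so its total fibre in $\An$ is contractible. But the mapping \emph{spectrum} $\hom_{\stabadd(\cE)}(E(i),\yo(y))$ is the total fibre, taken in $\Sp$, of the corresponding square of connective spectra $\yo(y)(-)$. A square of connective spectra which is a pullback in $\Sp_{\geq 0}$ (equivalently, which becomes a pullback in $\An$ after applying $\Omega^\infty$) need not be a pullback in $\Sp$, and its total fibre can have nonzero negative homotopy groups. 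Concretely, take $\cE = \Ab^\fg$, the exact sequence $\bbZ \overset{2}{\rightarrowtail} \bbZ \twoheadrightarrow \bbZ/2$, and $y = \bbZ$: since $\Hom_\cE(\bbZ/2,\bbZ) = 0$, one finds that $\hom_{\stabadd(\cE)}(E(i),\yo(\bbZ))$ is the desuspension of the fibre in $\Sp$ of multiplication by~$2$ on the Eilenberg--MacLane spectrum of $\bbZ$, hence has $\pi_{-2} \cong \bbZ/2$. Thus representables are \emph{not} right-orthogonal to $\Ac(\cE)$, and the Verdier-quotient criterion you invoke does not apply. (Left-orthogonality fails just as easily: $\hom_{\stabadd(\cE)}(\yo(x),E(i)) \simeq E(i)(x)$ is typically nonzero.)

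Klemenc's actual proof of full faithfulness is considerably more delicate: it passes through showing that the essential image $\stab_{\geq 0}(\cE)$ of $\pshadd^\f(\cE)$ in $\stab(\cE)$ is prestable and extension-closed, which requires a careful analysis of the structure of $\Ac(\cE)$ and of the calculus of fractions in the quotient. The ``potential obstacle'' you flag at the end is therefore not a routine foundational input but precisely where the substance of the argument lies.
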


\begin{cor}\label[cor]{cor:stab-op}
     For every exact category $\cE$, the functor $\stab(\cE^\op) \to \stab(\cE)^\op$ induced by $j^\op$ is an equivalence.
\end{cor}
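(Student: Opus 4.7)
The plan is to verify that $(\stab(\cE)^\op, j^\op)$ itself satisfies the universal property characterising $\stab(\cE^\op)$, which forces the natural comparison functor to be an equivalence.

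First I would observe that $(-)^\op$ is an involution on both $\exact$ and $\catst$: passing to opposites exchanges inclusions with projections and pushouts with pullbacks, hence sends exact squares to exact squares. Consequently the opposite of an exact functor between exact categories is again exact, and the opposite of a stable category is stable (with the maximal exact structure). In particular $j^\op \colon \cE^\op \to \stab(\cE)^\op$ is an exact functor from the opposite exact category into a stable category, so by \cref{thm:klemenc} applied to $\cE^\op$ it factors uniquely as
\[ \cE^\op \xrightarrow{j^{\cE^\op}} \stab(\cE^\op) \xrightarrow{\phi} \stab(\cE)^\op. \]
This $\phi$ is precisely the functor in the statement.

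To show $\phi$ is an equivalence, by Yoneda it suffices to show that for every stable category $\cC$, restriction along $\phi$ induces an equivalence
\[ \phi^* \colon \Fun^\exct(\stab(\cE)^\op, \cC) \to \Fun^\exct(\stab(\cE^\op), \cC). \]
By construction of $\phi$, composing $\phi^*$ with the equivalence $\Fun^\exct(\stab(\cE^\op), \cC) \simeq \Fun^\exct(\cE^\op, \cC)$ from \cref{thm:klemenc} yields the restriction $(j^\op)^*$ along $j^\op$. Using the natural identification $\Fun(\cX^\op, \cY) \simeq \Fun(\cX, \cY^\op)^\op$, together with the fact that a functor between stable categories is exact if and only if its opposite is exact, the functor $(j^\op)^*$ is identified with the opposite of
\[ j^* \colon \Fun^\exct(\stab(\cE), \cC^\op) \to \Fun^\exct(\cE, \cC^\op), \]
which is an equivalence by \cref{thm:klemenc} applied to $\cE$ and to the stable category $\cC^\op$. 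Hence $(j^\op)^*$ and therefore $\phi^*$ is an equivalence, so $\phi$ is an equivalence of stable categories.

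There is no serious obstacle here; the content of the argument is bookkeeping about how exact structures and exact functors transform under $(-)^\op$. The one point requiring a moment's care is confirming that the two sources of a comparison functor $\stab(\cE^\op) \to \stab(\cE)^\op$ coincide: the one extracted by applying the universal property of $\stab(\cE^\op)$ to $j^\op$, and the opposite of $j$-restriction unwound via $\Fun(\cX^\op, \cY) \simeq \Fun(\cX, \cY^\op)^\op$. Once this identification is nailed down, the proof is complete.
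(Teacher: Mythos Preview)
Your argument is correct and is essentially the same as the paper's proof: both verify that $\stab(\cE)^\op$ satisfies the universal property of $\stab(\cE^\op)$ by unwinding \cref{thm:klemenc} through the identification $\Fun^\exct(\cX^\op,\cC) \simeq \Fun^\exct(\cX,\cC^\op)^\op$. The paper just writes the chain of equivalences in one line without your preliminary remarks on $(-)^\op$.
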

\begin{proof}
    This follows by unwinding the universal property supplied by \cref{thm:klemenc}:
    \begin{align*}
        \Fun^\exct(\stab(\cE^\op),\cC)
        &\simeq \Fun^\exct(\cE^\op,\cC) 
        \simeq \Fun^\exct(\cE, \cC^\op)^\op \\
        &\simeq \Fun^\exct(\stab(\cE),\cC^\op)^\op
        \simeq \Fun^\exct(\stab(\cE)^\op,\cC) \qedhere
    \end{align*}
\end{proof}

\begin{rem}
    Note that for every stable category $\cC$ (equipped with the maixmal exact structure) and every additive category $\cA$ (equipped with the split exact structure), we have $\Fun^\exct(\cC, \cA)\simeq 0$:
    such a functor $F \colon \cC\to\cA$ maps $X\to 0\to \Sigma X$ to a split-exact sequence, so $F(X)\simeq 0$.

    However, the orthogonal complement of $\catst$ in $\exact$ contains more than just additive categories.
    Via the embedding $\cE \to \stab(\cE)$, we obtain a notion of mapping spectra in $\cE$.
    If $\cE$ has finite cohomological dimension in the sense that these mapping spectra
    are uniformly bounded below, then $\cE$ is also right-orthogonal to $\catst$:
    any object in the image of an exact functor $F \colon \cC \to \cE$ may be written as an arbitrarily high desuspension of another object, so we obtain for every $X \in \cC$, every $Y \in \cE$ and every natural number $l$ some object $X' \in \cC$ and an isomorphism
    \[ \pi_k\Hom_\cE(F(X),Y) \simeq \pi_0\Hom_{\stab(\cE)}(F(X'),\Sigma^{l-k}Y); \]
    the right hand term vanishes for $l$ sufficiently large by assumption.
\end{rem}

Resuming the above, we have seen that for every exact $\cE$, there is a diagram
\[
    \begin{tikzcd}
        & \cE^\oplus\arrow[r]\arrow[d] & \cE\arrow[d, "j"] \\
        \Ac(\cE)\arrow[r] & \stabadd(\cE)\arrow[r] & \stab(\cE)
    \end{tikzcd}
\]
Here, $\cE^\oplus$ denotes the split-exact category $\cE$, the bottom row is a Verdier sequence of stable categories and the vertical maps are exact and fully faithful functors.
In the following, we prove a refinement of Klemenc's result that $\cE \to \stab(\cE)$ exhibits $\cE$ as an extension-closed full subcategory \cite[Proposition~4.25]{klemenc:stablehull}.
First, we study the category of acyclics, which admits a fairly explicit description.

\begin{defn}
    An additive functor $F \colon \cE^\op \to \Ab$ is \emph{weakly effaceable} if for every $x \in \cE$ and $\alpha \in F(x)$ there exists a projection $p \colon x'\twoheadrightarrow x$ in $\cE$ with $p^*\alpha = 0$.
\end{defn}

\begin{prop}\label[prop]{prop:elementary-acyclics}
    Let $F \colon \cE^\op \to \Ab$ be an additive functor.
    Then the following are equivalent:
    \begin{enumerate}
        \item\label{it:elementary-acyclics-1} $F$ is equivalent to $E(i)$ for some inclusion $i$ of $\cE$;
        \item\label{it:elementary-acyclics-2} there exists a projection $p \colon y \twoheadrightarrow z$ in $\cE$ such that \[ F \simeq \coker( \pi_0\yo(p) \colon \pi_0\yo(y) \to \pi_0\yo(z) ); \]
        \item\label{it:elementary-acyclics-3} there exists a morphism $f \colon y \to z$ in $\cE$ such that
        \[ F \simeq \coker( \pi_0\yo(f) \colon \pi_0\yo(y) \to \pi_0\yo(z) ) \]
        and $F$ is weakly effaceable.
    \end{enumerate}
    Moreover, $E(i)$ is discrete for every inclusion $i$ of $\cE$.
\end{prop}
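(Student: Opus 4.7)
The plan is to first prove the ``moreover'' statement together with the equivalence (i)$\Leftrightarrow$(ii) via a pointwise computation of $E(i)$ in $\Sp$, and then to handle (ii)$\Leftrightarrow$(iii) by essentially formal manipulations with projections, pullbacks and direct sums. For the first part, fix an exact sequence $x \overset{i}{\rightarrowtail} y \overset{p}{\twoheadrightarrow} z$ and an object $w \in \cE$. By \cref{lem:pushouts} this sequence is bicartesian, so applying $\Hom_\cE(w,-)$ yields a pullback of anima and identifies $\yo(x)(w)$ with the connective cover $\tau_{\geq 0} F$ of the spectrum fibre $F := \fib(\yo(y)(w) \to \yo(z)(w))$. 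The long exact sequence of $F \to \yo(y)(w) \to \yo(z)(w)$ forces $\pi_n F = 0$ for $n \leq -2$ and $\pi_{-1} F \cong \coker(\pi_0\yo(p))(w)$, so $\tau_{<0} F$ is the $(-1)$-shift of a discrete spectrum. Applying the octahedral axiom to the factorisation $\yo(x)(w) \to F \to \yo(y)(w)$ then produces a cofibre sequence $\tau_{<0} F \to \cofib(\yo(i))(w) \to \yo(z)(w)$, whence $E(i)(w) \simeq \Sigma \tau_{<0} F$ is discrete with $\pi_0 E(i)(w) \cong \coker(\pi_0\yo(p))(w)$.

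This computation immediately yields the ``moreover'' part and (i)$\Rightarrow$(ii); for (ii)$\Rightarrow$(i), take $i := \fib(p)$. For (ii)$\Rightarrow$(iii), set $f := p$ and verify weak effaceability directly: a class $\alpha \in F(w)$ is represented by a morphism $\phi \colon w \to z$, and the pullback $w' := w \times_z y$ (available because projections are stable under pullback) yields a projection $w' \twoheadrightarrow w$ along which $\phi$ factors through $y$, so $\alpha$ vanishes in $F(w')$. For (iii)$\Rightarrow$(ii), applying weak effaceability to the class of $\id_z$ in $F(z)$ produces a projection $q \colon z' \twoheadrightarrow z$ together with a morphism $g \colon z' \to y$ satisfying $q \simeq f \circ g$. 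Set $p := (f,q) \colon y \oplus z' \to z$; this factors as $\id_y \oplus q$ (a direct sum of projections) followed by $(f, \id_z) \colon y \oplus z \to z$, and the latter is the canonical projection to $z$ precomposed with a unitriangular automorphism of $y \oplus z$. Hence $p$ is a composite of projections, and $\coker(\pi_0\yo(p)) \simeq F$ because the image of $\pi_0\yo(q)$ is contained in the image of $\pi_0\yo(f)$ via $g$.

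The main technical obstacle is the discreteness computation in the first paragraph: it crucially uses the exact-categorical fact that $i$ is the fibre of $p$ within $\cE$, which controls $F$ up to $\tau_{<0}$ and allows the octahedral axiom to cleanly isolate $E(i)(w)$ as a suspension of $\tau_{<0}F$. The remaining equivalences are comparatively routine, requiring only stability of projections under pullback and the observation that $(f, \id_z) \colon y \oplus z \to z$ is conjugate to the canonical split projection by an automorphism of $y \oplus z$.
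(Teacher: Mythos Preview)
Your proof is correct and follows essentially the same strategy as the paper. The only differences are presentational: for the discreteness of $E(i)$ you work pointwise in $\Sp$ via the octahedral axiom and Postnikov truncation, whereas the paper argues more globally by noting that the top row of
\[\begin{tikzcd}
  \yo(x)\ar[r]\ar[d,"\id"] & \yo(y)\ar[r]\ar[d,"\id"] & \cofib(\yo(i))\ar[d,"c(i)"] \\
  \yo(x)\ar[r] & \yo(y)\ar[r] & \yo(z)
\end{tikzcd}\]
is a cofibre sequence while the bottom is a fibre sequence in $\Fun^\times(\cE^\op,\Sp_{\geq 0})$; and for \eqref{it:elementary-acyclics-3}$\Rightarrow$\eqref{it:elementary-acyclics-2} you factor $(f,q)$ explicitly as a composite of projections, while the paper invokes (the dual of) \cref{lem:five-lemma} to see directly that $f + p'$ is a projection. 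Both routes are entirely equivalent in content.
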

\begin{proof}
     Let $x \overset{i}{\rightarrowtail} y \overset{p}{\twoheadrightarrow} z$ be an exact sequence in $\cE$.
     Then $c(i)$ fits into the following commutative diagram in $\Fun^\times(\cE^\op,\Sp_{\geq 0})$:
     \[\begin{tikzcd}
      \yo(x)\ar[r, "\yo(i)"]\ar[d, "\id"] & \yo(y)\ar[r]\ar[d, "\id"] & \cofib(\yo(i))\ar[d, "c(i)"] \\
      \yo(x)\ar[r, "\yo(i)"] & \yo(y)\ar[r] & \yo(z)
     \end{tikzcd}\]
     The top row is a cofibre sequence while the bottom row is a fibre sequence, so $\cofib(c(i))$ is discrete.
     In particular, $\cofib(c(i)) \simeq \coker(\pi_0\yo(p))$.
     This proves the equivalence of \eqref{it:elementary-acyclics-1} and \eqref{it:elementary-acyclics-2}, and shows that $E(i)$ is always discrete.

     Assume \eqref{it:elementary-acyclics-2}.
     By the preceding discussion, $F(t)$ can be identified with the quotient of $\pi_0\Hom_\cE(t,z)$ by the subgroup of morphisms which factor over $p$.
     Given a morphism $f \colon t \to z$, form the pullback $t \times_z y$.
     Then the structure map $q \colon t \times_z y \to t$ is a projection, and the composition $t \circ q$ factors over $p$ by construction.
     Hence $F$ is weakly effaceable.

     For the converse, assume \eqref{it:elementary-acyclics-3}.
     Since $F$ is weakly effaceable, there exists a projection $p' \colon y' \twoheadrightarrow z$ such that the composite
     \[ \pi_0\yo(y')\xrightarrow{\pi_0\yo(p')} \pi_0\yo(z) \to F \]
is trivial.
Then $p := f + p' \colon y \oplus y' \to z$ is a projection by \cref{lem:five-lemma}, and $F \simeq \coker(\pi_0\yo(f + p'))$ since the images of $\pi_0\yo(f)$ and $\pi_0\yo(p)$ agree by construction.
\end{proof}

\begin{defn}
    Denote by $\eff(\cE) \subseteq \Fun^\times(\cE^\op,\Ab)$ the full subcategory of objects satisfying the equivalent conditions of \cref{prop:elementary-acyclics}.
\end{defn}

\begin{prop}\label[prop]{prop:acyclics}
    Identifying $\stabadd(\cE)$ with a full subcategory of $\Fun^\times(\cE^\op,\Sp)$, an object $A$ is acyclic if and only if $\pi_*A$ is concentrated in finitely many degrees and $\pi_kA$ lies in $\eff(\cE)$ for each $k$. 
\end{prop}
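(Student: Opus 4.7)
I will prove the proposition by a double inclusion, writing $\cA'$ for the full subcategory of $\stabadd(\cE)$ described on the right-hand side.

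For the inclusion $\cA'\subseteq\Ac(\cE)$, I proceed by induction on the length of the interval $[a,b]$ supporting $\pi_\ast A$, which is finite by the hypothesis together with the weight structure of \cref{prop:weight-structure} (which ensures that objects of $\stabadd(\cE)$ have bounded-below homotopy). When $a=b$, $A$ is equivalent to $\pi_aA[a]$ in $\Fun^\times(\cE^\op,\Sp)$; \cref{prop:elementary-acyclics} identifies $\pi_aA\simeq E(i)$ for some inclusion $i$, whence $A\simeq E(i)[a]\in\Ac(\cE)$. For the inductive step, both $\pi_aA[a]\simeq E(i)[a]$ and $A$ lie in $\stabadd(\cE)$, so the Postnikov cover map $\pi_aA[a]\to A$, a priori a morphism in $\Fun^\times(\cE^\op,\Sp)$, is automatically a morphism in $\stabadd(\cE)$ by full faithfulness. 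Its cofibre $A'$ again lies in $\cA'$, now with $\pi_\ast A'$ supported in $(a,b]$, so induction together with closure of $\Ac(\cE)$ under cofibres yields $A\in\Ac(\cE)$.

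For the reverse inclusion $\Ac(\cE)\subseteq\cA'$, it suffices to show that $\cA'$ is a stable subcategory of $\stabadd(\cE)$ containing each $E(i)$. The containment $E(i)\in\cA'$ follows from \cref{prop:elementary-acyclics} and closure under shifts is immediate. Given a cofibre sequence $A\to B\to C$ in $\stabadd(\cE)$ with $A,B\in\cA'$, the long exact sequence of homotopy groups presents $\pi_kC$ as an extension of $\ker(\pi_{k-1}A\to\pi_{k-1}B)$ by $\coker(\pi_kA\to\pi_kB)$; and the homotopy groups of a retract of an object of $\cA'$ are summands, hence both sub- and quotient objects, of objects of $\eff(\cE)$. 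Both closures therefore reduce to the key technical claim that $\eff(\cE)$ is a Serre subcategory of the abelian category $\Fun^\times(\cE^\op,\Ab)$.

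To establish this Serre property, weak effaceability is manifestly stable under subs, quotients, and extensions via composition of effacing projections. For the cokernel-of-representables condition, extensions are handled via the projectivity of the representables $\pi_0\yo(z)$ in $\Fun^\times(\cE^\op,\Ab)$ (a direct Yoneda computation): given $0\to F\to G\to H\to 0$ with $F,H\in\eff(\cE)$, projectivity splits the pullback sequence $0\to F\to P\to\pi_0\yo(z_H)\to 0$, and combining this with chosen presentations of $F$ and $H$ yields a presentation $G\simeq\coker(\pi_0\yo(y_F\oplus y_H)\to\pi_0\yo(z_F\oplus z_H))$. For subs and quotients, the flexibility in characterisation (iii) of \cref{prop:elementary-acyclics}, allowing one to choose any presenting morphism, combined with weak effaceability (which permits absorbing extra generators into effacing projections), suffices. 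The main obstacle will be the subobject case, since subfunctors of principally generated functors are not automatically principally generated; the strategy is to leverage weak effaceability to reformulate the principal-generation problem in terms of projections in $\cE$, for which characterisation (iii) of \cref{prop:elementary-acyclics} provides the necessary flexibility.
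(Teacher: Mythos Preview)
Your overall structure matches the paper's: one inclusion via Postnikov induction, the other by showing that $\cA'$ is a stable subcategory containing each $E(i)$. The paper dispatches the latter in one line by citing \cite[Corollary~3.13]{klemenc:stablehull}; you instead try to establish the requisite closure properties of $\eff(\cE)$ by hand.

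There is a minor slip in the Postnikov step: for $A$ with homotopy supported in $[a,b]$ there is no natural map $\pi_aA[a]\to A$. Use the truncation $A\to\pi_aA[a]$ (with fibre $\tau_{\geq a+1}A$) or the cover $\pi_bA[b]\to A$ instead; either makes the induction work, and the remaining truncation automatically lies in $\cA'$ without any appeal to the Serre property.

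The substantive gap is the kernel case of the Serre property, where you give only a one-line strategy. The idea is sound and can be completed, though not quite along the lines you suggest (weak effaceability is not the main tool). Present $F$ and $G$ via characterisation~(ii) of \cref{prop:elementary-acyclics} so that the presenting maps $p_F\colon y_F\twoheadrightarrow z_F$ and $p_G\colon y_G\twoheadrightarrow z_G$ are \emph{projections}; lift $\phi$ to some $g\colon z_F\to z_G$ by projectivity of $\pi_0\yo(z_F)$; then form the iterated pullbacks $w:=y_G\times_{z_G}z_F$ and $v:=y_F\times_{z_F}w$, which exist in $\cE$ because projections are stable under pullback. Using that $\pi_0$ of a pullback of anima surjects onto the pullback of $\pi_0$'s, one checks directly that $\ker\phi\simeq\coker\big(\pi_0\yo(v\twoheadrightarrow w)\big)$, which lies in $\eff(\cE)$ by characterisation~(ii) again. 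Without this argument (or the citation to Klemenc), your proof of $\Ac(\cE)\subseteq\cA'$ is incomplete.
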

\begin{proof}
    Let $\cA$ denote the full subcategory of additive presheaves $A \colon \cE^\op \to \Sp$ such that $\pi_*A$ is concentrated in finitely many degrees and $\pi_kA$ lies in $\eff(\cE)$ for each $k$.
    Then $\cA$ contains all objects $E(i)$ by \cref{prop:elementary-acyclics}, and it is a stable subcategory by \cite[Corollary~3.13]{klemenc:stablehull}.
    Hence $\Ac(\cE) \subseteq \cA$.
    
    Conversely, an induction along the Postnikov tower shows that every object in $\cA$ is contained in the stable subcategory generated by $\eff(\cE)$.
\end{proof}

In particular, from the above proposition, we can deduce the following generalisation of \cite[Lemma~1.2]{neeman:counterexamples}.

\begin{cor}\label[cor]{cor:acyclics-t-structure}
    The Postnikov t-structure on $\Fun^\times(\cE^\op,\Sp)$ restricts to a bounded t-structure on $\Ac(\cE)$ whose heart is equivalent to $\eff(\cE)$.
\end{cor}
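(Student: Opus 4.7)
The plan is to deduce the result directly from the characterization of acyclics supplied by \cref{prop:acyclics}, which detects membership in $\Ac(\cE)$ purely through conditions on homotopy groups that are manifestly stable under Postnikov truncation.

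First I would verify that the Postnikov t-structure on $\Fun^\times(\cE^\op,\Sp)$ restricts to $\Ac(\cE)$. For any $A \in \Ac(\cE)$, the truncations $\tau_{\geq 0}A$ and $\tau_{\leq 0}A$ (computed in the ambient category $\Fun^\times(\cE^\op,\Sp)$) have homotopy groups which agree with those of $A$ in the relevant range of degrees and vanish elsewhere. In particular they remain concentrated in finitely many degrees, and each nonzero homotopy group is one of the $\pi_k A$, hence lies in $\eff(\cE)$. Applying \cref{prop:acyclics} in the opposite direction places both truncations back inside $\Ac(\cE)$, so the t-structure restricts; setting $\Ac(\cE)_{\geq 0} := \Ac(\cE) \cap \Fun^\times(\cE^\op,\Sp)_{\geq 0}$ and analogously for the coconnective part gives the restricted t-structure on $\Ac(\cE)$.

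Boundedness is then immediate: any $A \in \Ac(\cE)$ has homotopy concentrated in a finite interval $[-n,n]$, so $A$ lies in $\Ac(\cE)_{\geq -n} \cap \Ac(\cE)_{\leq n}$. For the heart, I would identify
\[ \Ac(\cE)^\heartsuit \;=\; \Ac(\cE) \cap \Fun^\times(\cE^\op,\Sp)^\heartsuit \;=\; \Ac(\cE) \cap \Fun^\times(\cE^\op,\Ab), \]
and observe via \cref{prop:acyclics} that an additive functor $F$, viewed as a spectrum-valued presheaf concentrated in degree zero, is acyclic precisely when $F \in \eff(\cE)$.

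I do not anticipate a substantive obstacle: the genuine content already lives in \cref{prop:elementary-acyclics} and \cref{prop:acyclics}, and what remains is essentially bookkeeping. The one small point worth double-checking is that the Postnikov truncation of an object of $\stabadd(\cE)$ need not a priori remain in $\stabadd(\cE)$, but this is automatic here because the homotopy-group characterization of \cref{prop:acyclics} is intrinsic to the ambient functor category, so once the truncation satisfies the homotopy conditions it lies in $\Ac(\cE) \subseteq \stabadd(\cE)$.
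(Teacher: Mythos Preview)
Your proposal is correct and takes essentially the same approach as the paper: the paper's proof consists of the single sentence ``This is immediate from \cref{prop:acyclics},'' and you have simply unpacked that immediacy. Your observation that the converse direction of \cref{prop:acyclics} is stated for objects of the ambient functor category (so that the Postnikov truncations are automatically seen to lie back in $\Ac(\cE)$) is exactly the point that makes the deduction go through without further work.
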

\begin{proof}
    This is immediate from \cref{prop:acyclics}.
\end{proof}

We will show in \cref{sec:bounded-heart-categories} that the image under the Verdier projection $\stabadd(\cE) \to \stab(\cE)$ of the bounded weight structure on $\stabadd(\cE)$ of \cref{prop:weight-structure} is the positive part of a bounded heart structure on $\stab(\cE)$.
As a first step in this direction, let us denote by $\stab_{\geq 0}(\cE)$ the essential image of $\stabadd_{\geq 0}(\cA)$ in $\stab(\cE)$. Then we observe the following.

\begin{lem}\label[lem]{lem:stab-positive-closure-properties}
 The full subcategory $\stab_{\geq 0}(\cE)$ is closed under finite colimits and extensions in $\stab(\cE)$.
\end{lem}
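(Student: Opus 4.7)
My plan is to reduce both closure properties to a single \emph{lifting lemma} and then to prove the lemma using the Postnikov orthogonality in $\Fun^\times(\cE^\op, \Sp)$.

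First I note that $0 = \pi(0) \in \stab_{\geq 0}(\cE)$, and $\stab_{\geq 0}(\cE)$ is closed under finite direct sums since $\pi$ is additive and $\stabadd_{\geq 0}(\cE)$ is closed under them. So closure of $\stab_{\geq 0}(\cE)$ under finite colimits reduces to closure under cofibres of morphisms. The key is the following \emph{lifting lemma}: for every morphism $f \colon \pi \tilde X \to \pi \tilde Y$ in $\stab(\cE)$ with $\tilde X, \tilde Y \in \stabadd_{\geq 0}(\cE)$, there exist $\tilde Y' \in \stabadd_{\geq 0}(\cE)$, a morphism $\sigma \colon \tilde Y \to \tilde Y'$ in $\stabadd(\cE)$ with acyclic cofibre, and a morphism $\tilde f \colon \tilde X \to \tilde Y'$ such that $\pi \tilde f \simeq \pi \sigma \circ f$. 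Granting this, closure under cofibres is immediate since $\cofib(f) \simeq \pi\cofib(\tilde f)$ and $\cofib(\tilde f) \in \stabadd_{\geq 0}(\cE)$ by prestability. Closure under extensions will follow similarly: given $X \to Y \to Z$ in $\stab(\cE)$ with $X, Z \in \stab_{\geq 0}(\cE)$, I lift $X, Z$ to $\tilde X, \tilde Z \in \stabadd_{\geq 0}(\cE)$, apply the lifting lemma to the connecting morphism $Z \to \Sigma X$ (noting $\Sigma\tilde X \in \stabadd_{\geq 0}(\cE)$) to obtain $\tilde\delta \colon \tilde Z \to \Sigma\tilde X'$ in $\stabadd(\cE)$, and realise $Y$ as $\pi \fib(\tilde\delta)$, where $\fib(\tilde\delta) \in \stabadd_{\geq 0}(\cE)$ fits in a cofibre sequence $\tilde X' \to \fib(\tilde\delta) \to \tilde Z$ via prestability.

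To prove the lifting lemma, I will start with any representative $\tilde X \to \tilde Y_0$ of $f$ such that $\tilde Y \to \tilde Y_0$ has acyclic cofibre $B \in \Ac(\cE)$, classified by a connecting map $\partial \colon B \to \Sigma \tilde Y$ in $\stabadd(\cE)$. By \cref{cor:acyclics-t-structure}, the Postnikov truncation in $\Ac(\cE)$ provides a cofibre sequence $\tau_{<0} B \to B \to \tau_{\geq 0} B$, with $\tau_{\geq 0}B \in \stabadd_{\geq 0}(\cE)$ as an iterated extension of objects in $\eff(\cE) \subseteq \stabadd_{\geq 0}(\cE)$, using that $\stabadd_{\geq 0}(\cE)$ is closed under extensions. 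Since $\tau_{<0} B$ lies in the $\leq -1$ part of the Postnikov $t$-structure on $\Fun^\times(\cE^\op, \Sp)$, and $\Sigma \tilde Y$ is $1$-connective there, the standard Postnikov orthogonality gives $\pi_0 \Hom_{\stabadd(\cE)}(\tau_{<0}B, \Sigma\tilde Y) = 0$. Hence the composite $\tau_{<0}B \to B \xrightarrow{\partial} \Sigma \tilde Y$ is null, so $\partial$ factors as $B \to \tau_{\geq 0}B \xrightarrow{\partial'} \Sigma \tilde Y$. The morphism $\partial'$ classifies a new cofibre sequence $\tilde Y \to \tilde Y' \to \tau_{\geq 0}B$, and the functoriality of cofibre sequences produces $\sigma \colon \tilde Y_0 \to \tilde Y'$ with acyclic cofibre $\Sigma \tau_{<0} B$. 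Taking $\tilde f \colon \tilde X \to \tilde Y_0 \to \tilde Y'$ yields a new representative of $f$, and $\tilde Y'$ lies in $\stabadd_{\geq 0}(\cE)$ since it is the extension of $\tilde Y$ by $\tau_{\geq 0}B$, both of which are in $\stabadd_{\geq 0}(\cE)$.

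The main obstacle I anticipate is the Postnikov orthogonality input: one has to identify the $t$-structure on $\Ac(\cE)$ from \cref{cor:acyclics-t-structure} as the restriction of the ambient Postnikov $t$-structure on $\Fun^\times(\cE^\op, \Sp)$, and then invoke the standard degree-counting bound on $\pi_0$ of mapping spectra. The remaining manipulations — the functoriality of cofibre sequences producing the comparison morphism $\sigma$, and the verification that the resulting roof continues to represent $f$ in the Verdier quotient — are routine but require some careful bookkeeping in the calculus of fractions for stable $\infty$-categories.
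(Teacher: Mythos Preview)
Your approach is genuinely different from the paper's, which simply cites three results from Klemenc's paper and does not give a self-contained argument. Your idea of combining the calculus of fractions with the Postnikov $t$-structure on $\Ac(\cE)$ (from \cref{cor:acyclics-t-structure}) to produce connective lifts is a nice and more direct route, and the argument for closure under cofibres is correct.

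There is, however, a gap in the extension argument. Your lifting lemma, as stated, only guarantees that the new target $\tilde Y'$ lies in $\stabadd_{\geq 0}(\cE)$. When you apply it to the connecting map $Z \to \Sigma X$ with $\tilde Y = \Sigma\tilde X$, you write the resulting target as $\Sigma\tilde X'$ and then invoke prestability to conclude $\fib(\tilde\delta) \in \stabadd_{\geq 0}(\cE)$ via the cofibre sequence $\tilde X' \to \fib(\tilde\delta) \to \tilde Z$. But this requires $\tilde X' = \Omega\tilde Y' \in \stabadd_{\geq 0}(\cE)$, i.e.\ that $\tilde Y'$ is $1$-connective, and your construction does not give this: $\tilde Y'$ is an extension of $\Sigma\tilde X$ by $\tau_{\geq 0}B$, and $\tau_{\geq 0}B$ may have nontrivial $\pi_0$. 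Concretely, if $B \in \eff(\cE)$ is a nonzero discrete acyclic, then $\tau_{<0}B = 0$, your construction changes nothing, and $\Omega\tilde Y_0$ is not connective.

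The fix is a one-line sharpening of the same orthogonality step. In the extension case $\tilde Y = \Sigma\tilde X$ is $1$-connective, so $\Sigma\tilde Y = \Sigma^2\tilde X$ is $2$-connective; hence $\pi_0\Hom_{\stabadd(\cE)}(\tau_{\leq 0}B,\Sigma\tilde Y) = 0$ and $\partial$ factors through $\tau_{\geq 1}B$ rather than merely $\tau_{\geq 0}B$. The resulting $\tilde Y'$ is then an extension of two $1$-connective objects, hence lies in $\Sigma\stabadd_{\geq 0}(\cE)$, and your prestability argument goes through. With this adjustment the proof is complete. (A minor point: you use $\sigma$ for both the map $\tilde Y \to \tilde Y'$ in the statement of the lifting lemma and the map $\tilde Y_0 \to \tilde Y'$ in its proof; you should disambiguate.)
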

\begin{proof}
 This follows by combining Proposition~4.4, Proposition~4.7 and (the proof of) Corollary~4.8 in \cite{klemenc:stablehull}.
\end{proof}

\begin{lem}\label[lem]{lem:j-left-special}
    The embedding $\cE \to \stab_{\geq 0}(\cE)$ exhibits $\cE$ as a left special subcategory.
\end{lem}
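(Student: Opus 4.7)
The plan is to find the desired $(u, f)$ by resolving $Y$ at the level of $\pshadd^\f(\cE)$ via the $\pi_0$-surjection machinery from the proof of \cref{prop:weight-structure}, and then to transport the resulting data back to $\stab(\cE)$.

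Given a projection $p\colon Y\twoheadrightarrow\yo(v)$ in $\stab_{\geq 0}(\cE)$ with $Y\in\stab_{\geq 0}(\cE)$ and $v\in\cE$, I set $X:=\fib(p)\in\stab_{\geq 0}(\cE)$.  First I choose a lift $\tilde Y\in\pshadd^\f(\cE)$ of $Y$ along the Verdier quotient $\pshadd^\f(\cE)\to\stab_{\geq 0}(\cE)$.  The subcategory argument used in the proof of \cref{prop:weight-structure} (the full subcategory $\cC$ of objects of $\pshadd^\f(\cE)$ admitting a $\pi_0$-surjection from a representable contains all representables and is closed under finite colimits, hence is all of $\pshadd^\f(\cE)$) produces $u\in\cE$ together with $\tilde f\colon \yo(u)\to\tilde Y$ in $\pshadd^\f(\cE)$ whose fibre is connective.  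Passing to $\stab(\cE)$, the image $f\colon \yo(u)\to Y$ then has fibre in $\stab_{\geq 0}(\cE)$, so $f$ is a projection in $\stab_{\geq 0}(\cE)$.  The composite $pf$ is then a composite of projections in $\stab_{\geq 0}(\cE)$, hence itself a projection there, and by the full faithfulness of $\cE\hookrightarrow\stab(\cE)$ coming from \cref{thm:klemenc} equals $\yo(\phi)$ for a unique morphism $\phi\colon u\to v$ in $\cE$.

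The substantive step, and the main obstacle, is then to upgrade the statement ``$\yo(\phi)$ is a projection in $\stab_{\geq 0}(\cE)$'' to ``$\phi$ is a projection in $\cE$''.  The octahedron yields a fibre sequence $\fib(f)\to \fib(\yo(\phi))\to X$ with outer terms in $\stab_{\geq 0}(\cE)$, so extension-closure (\cref{lem:stab-positive-closure-properties}) places $\fib(\yo(\phi))$ in $\stab_{\geq 0}(\cE)$.  To promote this to membership in $\cE$, I plan to exploit the effaceability description of \cref{prop:elementary-acyclics}: because $p$ is a projection in $\stab_{\geq 0}(\cE)$ and $\tilde f$ is a $\pi_0$-surjection, the cokernel of $\pi_0\yo(\phi)$ is an effaceable functor and therefore, again by \cref{prop:elementary-acyclics}, agrees with $\coker(\pi_0\yo(q))$ for some projection $q\colon u''\twoheadrightarrow v$ in $\cE$.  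Replacing $(u,f)$ by $(u\oplus u'',\,(f,g))$ for a suitable lift $g$ of $\yo(q)$ along $p$ supplied by this effaceability, an application of \cref{lem:five-lemma} to the composite $(\phi,q)\colon u\oplus u''\to v$ identifies its fibre in $\cE$ as an extension of $u$ by the fibre of $q$, exhibiting $(\phi,q)$ as the desired projection in $\cE$.  The hard part of the argument is the construction of the lift $g$: this is an obstruction-theoretic question controlled by the composite $\yo(u'')\to\yo(v)\xrightarrow{\delta}\Sigma X$ in $\stab(\cE)$, whose vanishing I expect to deduce from the way $q$ was produced by the effaceability of the cokernel of $\pi_0(p)$.
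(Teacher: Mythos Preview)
The detour through resolving $\tilde Y$ by a representable does not buy you anything, and the only substantive step is deferred to ``the hard part'' without proof.

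Your effaceability claim is not justified: you assert that $\coker(\pi_0\yo(\phi))$ is weakly effaceable ``because $p$ is a projection in $\stab_{\geq 0}(\cE)$ and $\tilde f$ is a $\pi_0$-surjection'', but $p$ lives only in the \emph{quotient} $\stab_{\geq 0}(\cE)$ of $\pshadd^\f(\cE)$ and has no lift there, so there is no composite in $\pshadd^\f(\cE)$ to which the $\pi_0$-surjectivity of $\tilde f$ applies. Unwinding what you actually need, effaceability of $\coker(\pi_0\yo(\phi))$ amounts to the statement that for $W = \fib(j(\phi)) \in \stab_{\geq 0}(\cE)$ every class in $\pi_0\Hom_{\stab(\cE)}(j(t),\Sigma W)$ dies after precomposition with some projection $t' \twoheadrightarrow t$ in $\cE$. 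This is true, but its proof---representing the map by a zig-zag $\yo(t) \to G \leftarrow \Sigma F$ in $\pshadd^\f(\cE)$ with acyclic cofibre, using \cref{prop:acyclics} to efface the map into the acyclic, and observing that any lift $\yo(t') \to \Sigma F$ is null since $\Sigma F$ is $1$-connective---is exactly the argument the paper carries out, applied directly to the boundary $\delta \colon j(v) \to \Sigma X$.

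Moreover, that argument already finishes the lemma by itself. What you call ``the hard part''---finding a projection $q \colon u'' \twoheadrightarrow v$ in $\cE$ for which the obstruction $j(u'') \xrightarrow{j(q)} j(v) \xrightarrow{\delta} \Sigma X$ vanishes---is the whole content: once this vanishes, $j(q)$ lifts to $Y$ and, since $q$ is already a projection in $\cE$, the left-speciality condition is verified. The objects $u$, $\tilde Y$, the map $f$, and the morphism $\phi$ play no role. Your outline thus postpones the only genuine step to an ``expected'' vanishing in the final sentence, while the preceding constructions neither prove nor simplify it.
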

\begin{proof}
    The following argument is contained in the proof of \cite[Proposition~4.25]{klemenc:stablehull}, but let us spell it out to highlight the key points.
    
    Consider an exact sequence $X \to Y \to j(z)$ in $\stab_{\geq 0}(\cE)$.
    The formula for mapping anima in the Verdier quotient $\stabadd(\cE)/\Ac(\cE)$ together with \cite[Proposition~4.6]{klemenc:stablehull} imply that the boundary map $j(z) \to \Sigma X$ can be represented by a zig-zag
    \[ \yo(z) \xrightarrow{d} G \xleftarrow{s} \Sigma F \]
    in $\cP_\Sigma^\f(\cE)$, where $F$ is a preimage of $X$ under the localisation functor and $\cofib(s)$ is acyclic.
    By \cref{prop:acyclics}, there exists a projection $p \colon y \twoheadrightarrow z$ in $\cE$ such that the composite map $\yo(y) \xrightarrow{d \circ \yo(p)} G \to \cofib(s)$ is nullhomotopic.
    Hence $d \circ \yo(p)$ lifts to $\Sigma F$.
    Since suspensions in $\cP_\Sigma^\f(\cE) \subseteq \Fun^\times(\cE^\op,\Sp_{\geq 0})$ are computed pointwise, $\Sigma F$ is $1$-connective. It follows that the zig-zag $\yo(y) \xrightarrow{d \circ \yo(p)} G \xleftarrow{s} \Sigma F$ represents the trivial map in $\stab_{\geq 0}(\cE)$, and consequently $j(p)$ lifts to $Y$.

    We are left with showing that $\cE$ is closed under extensions in $\stab_{\geq 0}(\cE)$. Given a cofibre sequence $j(x) \to Y \to j(z)$, the preceding discussion implies that there exists a commutative diagram
    \[\begin{tikzcd}
        j(x')\ar[r, "j(i)"]\ar[d] & j(y)\ar[r, "j(p)"]\ar[d] & j(z)\ar[d, "\id"] \\
        j(x)\ar[r] & Y\ar[r] & j(z)
    \end{tikzcd}\]
    in which both rows are cofibre sequences.
    \cref{lem:pushouts} implies that the left square is a pushout in $\stab_{\geq 0}(\cE)$.
    Since $i$ is an inclusion and $j$ preserves pushouts along inclusions, it follows that $Y \simeq j(x \cup_{x'} y)$.
\end{proof}

The following theorem generalises the classical Gabriel--Quillen embedding \cite[Appendix~A]{TT}.

\begin{theorem}[{Gabriel--Quillen embedding theorem \cite[Theorem~1.2]{klemenc:stablehull}}]\label[theorem]{thm:gabriel-quillen}
 Let $\cE$ be an exact category.
 Then $j \colon \cE \to \stab_{\geq 0}(\cE)$ exhibits $\cE$ as an extension-closed full subcategory, and the induced exact structure on $\cE$ coincides with the given exact structure.
 In addition, the following are equivalent:
 \begin{enumerate}
     \item $\cE$ is weakly idempotent complete;
     \item $\cE$ is closed under fibres of projections in $\stab_{\geq 0}(\cE)$.
 \end{enumerate}
\end{theorem}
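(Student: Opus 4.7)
The plan is to leverage \cref{lem:j-left-special}, which already establishes that $\cE$ is extension-closed in $\stab_{\geq 0}(\cE)$ and that the inclusion is left special, and then to combine this left-special property with weak idempotent completeness to handle the equivalence (i) $\Leftrightarrow$ (ii).

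For the coincidence of exact structures, I would appeal to the detection of exact sequences in Klemenc's Gabriel--Quillen embedding \cite[Proposition~4.17]{klemenc:stablehull}. Given $f \colon x \to y$ in $\cE$, $f$ is an inclusion in the induced exact structure iff $\cofib(j(f))$ lies in $j(\cE)$. If $\cofib(j(f)) \simeq j(z)$, then full faithfulness of $j$ yields $p \colon y \to z$ in $\cE$ with $pf \simeq 0$, and the resulting cofibre sequence $j(x) \to j(y) \to j(z)$ in $\stab(\cE)$ forces $x \to y \to z$ to be exact in $\cE$ via the detection property; the reverse direction follows from exactness of $j$.

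For (i) $\Rightarrow$ (ii), start with an exact sequence $F \rightarrowtail j(y) \twoheadrightarrow j(z)$ in $\stab_{\geq 0}(\cE)$ with $y, z \in \cE$; the morphism $j(y) \twoheadrightarrow j(z)$ corresponds by full faithfulness to $p \colon y \to z$ in $\cE$. Apply \cref{lem:j-left-special} to produce $g \colon u \to y$ in $\cE$ with $u \in \cE$ such that $pg$ is a projection in $\cE$. Form the pullback square
\[
\begin{tikzcd}
    u \times_z y \ar[r] \ar[d, "\pi_u"'] & y \ar[d, "p"] \\
    u \ar[r, "pg", two heads] & z
\end{tikzcd}
\]
which exists in $\cE$ since $pg$ is a projection. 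The universal property of the pullback, applied to $\id_u \colon u \to u$ and $g \colon u \to y$ (which are compatible because $pg = p \circ g$), produces a section $\sigma \colon u \to u \times_z y$ of $\pi_u$, so $\pi_u$ is a split epimorphism. Weak idempotent completeness of $\cE$ upgrades split epimorphisms to projections in $\cE$, so the fibre $F'$ of $\pi_u$ lies in $\cE$. Since $j$ preserves exact squares, the square remains a pullback in $\stab(\cE)$, giving $F' \simeq \fib(j(p)) = F$; hence $F \in \cE$.

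For (ii) $\Rightarrow$ (i), let $r \colon b \to a$ in $\cE$ be a retraction with section $s$, and set $c := \fib(j(r)) \simeq \cofib(j(s))$ in $\stab(\cE)$. By \cref{lem:stab-positive-closure-properties}, $c$ lies in $\stab_{\geq 0}(\cE)$ as a cofibre of a morphism between two objects of $\stab_{\geq 0}(\cE)$. Assumption (ii) then forces $c \in \cE$, and the splitting $j(b) \simeq j(a) \oplus c$ in $\stab(\cE)$ descends by full faithfulness and additivity of $j$ to a direct sum decomposition in $\cE$, witnessing weak idempotent completeness. The principal difficulty is the coordination in (i) $\Rightarrow$ (ii) between left-specialness, preservation of exact squares, and weak idempotent completeness: one must recognise that a pullback formed inside $\cE$ secretly computes the relevant fibre in the stable envelope, and this identification hinges on the map $\pi_u$ being a split epimorphism that weak idempotent completeness promotes to a bona fide projection.
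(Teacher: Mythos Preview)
Your proof is correct. The arguments for extension-closedness, the coincidence of exact structures, and (ii) $\Rightarrow$ (i) match the paper's, except that the paper gives a self-contained argument for the coincidence of exact structures via left-specialness and Quillen's obscure axiom (\cref{lem:obscure-axiom}) rather than citing Klemenc's detection result (which, incidentally, is \cite[Proposition~4.25]{klemenc:stablehull} rather than~4.17).

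For (i) $\Rightarrow$ (ii) you take a genuinely different route. The paper stays in $\stab_{\geq 0}(\cE)$: from the morphism $g \colon y' \to y$ supplied by left-specialness (with fibre $x' := \fib(pg)$), it observes that the induced square comparing $j(x') \to j(y')$ with $X \to j(y)$ is a pushout, so \cref{lem:pushouts} yields an exact sequence $j(x') \rightarrowtail X \oplus j(y') \twoheadrightarrow j(y)$; extension-closedness places $X \oplus j(y')$ in the image of $j$, and weak idempotent completeness then forces $X$ into it as well. Your argument instead descends to $\cE$, forming the pullback $u \times_z y$ there and invoking weak idempotent completeness earlier to promote the split epimorphism $\pi_u$ to a genuine projection, before transporting the resulting fibre back via $j$. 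Both approaches work; the paper's reuses the just-established extension-closedness as its key lever, while yours exploits that pullbacks along projections already live in $\cE$.

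One small imprecision: your pullback square is not an ``exact square'' in the paper's strict sense, since no pair of opposite sides is known to consist of an inclusion and a projection, so the phrase ``$j$ preserves exact squares'' does not literally apply. The conclusion is nonetheless correct---by the dual of \cref{lem:pushouts} the square yields an exact sequence $u \times_z y \rightarrowtail u \oplus y \twoheadrightarrow z$, which $j$ does preserve, and in the stable target this exhibits $j(u \times_z y)$ as the pullback $j(u) \times_{j(z)} j(y)$.
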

\begin{proof}
    The full subcategory $\cE$ is extension-closed by \cref{lem:stab-positive-closure-properties,lem:j-left-special}.
    \cite[Proposition~4.25]{klemenc:stablehull} shows that the two exact structures on $\cE$ coincide, but one can also argue directly as follows: if $j(x) \xrightarrow{j(i)} j(y) \xrightarrow{j(p)} j(z)$ is a cofibre sequence in $\stab(\cE)$, it is also a cofibre sequence in $\stab_{\geq 0}(\cE)$.
    Since $\cE$ is left special in $\stab_{\geq 0}(\cE)$, the morphism $p$ becomes a projection after precomposition with some morphism $g \colon y' \to y$ in $\cE$.
    Since $j$ is fully faithful, $i$ is a fibre of $p$ in $\cE$.
    Quillen's ``obscure axiom''~\ref{lem:obscure-axiom} implies that $p$ is a projection, and we are done.
    
    Suppose that $\cE$ is weakly idempotent complete and consider a cofibre sequence $X \to j(y) \xrightarrow{j(p)} j(z)$ in $\stab_{\geq 0}(\cE)$.
    By \cref{lem:j-left-special}, there exist an exact sequence $x' \rightarrowtail y' \overset{q}{\twoheadrightarrow} z$  and a morphism $g \colon y' \to y$ in $\cE$ such that $q \simeq p \circ g$.
    This implies that the induced commutative square
    \[\begin{tikzcd}
        j(x')\ar[r]\ar[d] & j(y')\ar[d, "g"] \\
        X\ar[r] & j(y)
    \end{tikzcd}\]
    is a pushout.
    By \cref{lem:pushouts}, there exists an exact sequence $j(x') \to X \oplus j(y') \to j(y)$.
    Since the essential image of $j$ is closed under extensions, the object $X \oplus j(y')$ is equivalent to an object in $\cE$.
    It follows that $X$ is equivalent to an object in $\cE$ by weak idempotent completeness.

    Conversely, let $i \colon x \to y$ be a morphism in $\cE$ which admits a retraction $r \colon y \to x$.
    Then $\cofib(j(i))$ exists in $\stab_{\geq 0}(\cE)$, and we have $j(y) \simeq j(x) \oplus \cofib(j(i))$.
    Hence $j(r)$ is a projection in $\stab_{\geq 0}(\cE)$, and there exists an exact sequence $\cofib(j(i)) \to j(y) \xrightarrow{j(r)} j(x)$.
    Since $\cE$ is closed under fibres of projections, it follows that $\cofib(j(i))$ is equivalent to an object in $\cE$.
\end{proof}


\section{Keller's criterion}

Equipped with our understanding
of the Gabriel--Quillen embedding, we are now able to prove Keller's criterion by elementary methods of homological algebra.
The following notions are directly adapted from
\cite[Chapitre~II]{grothendieck:tohoku}.
Note that what we call a ``$\delta$-functor'' is an ``exact $\delta$-functor'' in Grothendieck's language.

\begin{defn}
    Let $\cE$ be an exact category.
    \begin{enumerate}
        \item A \emph{$\delta$-functor} on $\cE$ is a sequence of additive functors
         \[ \{F^n \colon \cE^\op \to \Ab\}_{n \geq 0} \]
         together with a sequence of homomorphisms
         \[ \{ \delta^n \colon F^n(x) \to F^{n+1}(z) \}_{n \geq 0} \]
         for each exact sequence $x \rightarrowtail y \twoheadrightarrow z$ in $\cE$ such that the following holds:
         \begin{enumerate}
             \item for each exact sequence $x \rightarrowtail y \twoheadrightarrow z$, the induced sequence
             \[ F^0(z) \to F^0(y) \to F^0(x) \xrightarrow{\delta^0} F^1(z) \to F^1(y) \to F^1(x) \xrightarrow{\delta^1} \ldots \]
            is exact;
            \item for each morphism of exact sequences
            \[\begin{tikzcd}
                x\ar[r, rightarrowtail]\ar[d] & y\ar[r, twoheadrightarrow]\ar[d] & z\ar[d] \\
                x'\ar[r, rightarrowtail] & y'\ar[r, twoheadrightarrow] & z'
            \end{tikzcd}\]
            in $\cE$ and each $n \geq 0$, the induced square
            \[\begin{tikzcd}
                F^n(x')\ar[r, "\delta^n"]\ar[d] & F^{n+1}(z')\ar[d] \\
                F^n(x)\ar[r, "\delta^n"] & F^{n+1}(z)
            \end{tikzcd}\]
            commutes.
        \end{enumerate}
        \item A \emph{transformation $\tau \colon F \Rightarrow G$ of $\delta$-functors} is a sequence of natural transformations $\{ \tau^n \colon F^n \Rightarrow G^n \}_{n \geq 0}$ such that for each exact sequence $x \rightarrowtail y \twoheadrightarrow z$ the induced square
        \[\begin{tikzcd}
            F^n(x)\ar[r, "\delta^n"]\ar[d, "\tau^n"'] & F^{n+1}(z)\ar[d, "\tau^{n+1}"] \\
            G^n(x)\ar[r, "\delta^n"] & G^{n+1}(z)
        \end{tikzcd}\]
        is commutative.
        \item A $\delta$-functor $F$ is \emph{universal} if for every $\delta$-functor $G$ on $\cE$, every natural transformation $F^0 \Rightarrow G^0$ extends uniquely to a transformation of $\delta$-functors $F \Rightarrow G$.
    \end{enumerate}
\end{defn}

A universal $\delta$-functor is uniquely determined by its $0$-th component.
The following lemma, which allows us to recognise universal $\delta$-functors, goes back to Grothendieck \cite[Proposition~2.2.1]{grothendieck:tohoku}.
See also \cite[Proposition~4.2]{buchsbaum:satellites} for the characterisation of universality using weakly effaceable functors.
We include a proof for the reader's convenience.

\begin{lem}\label[lem]{lem:universal-delta}
    Let $\cE$ be an exact category and let $F$ be a $\delta$-functor on $\cE$.
    If $F^n$ is weakly effaceable for every $n \geq 1$, then $F$ is universal.
\end{lem}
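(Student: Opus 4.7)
The plan is to proceed by induction on $n \geq 0$, constructing the components $\tau^n \colon F^n \Rightarrow G^n$ one at a time. The base case $\tau^0$ is prescribed. Assuming $\tau^0,\ldots,\tau^n$ have been produced and are already compatible with the connecting maps in degrees $\leq n-1$, I would construct $\tau^{n+1}$ pointwise as follows. Fix $x \in \cE$ and $\alpha \in F^{n+1}(x)$. Since $F^{n+1}$ is weakly effaceable (allowed because $n+1 \geq 1$), there exists a projection $p \colon y \twoheadrightarrow x$ with $p^*\alpha = 0$; completing it to an exact sequence $u \rightarrowtail y \twoheadrightarrow x$ and exploiting exactness of the $F$-sequence at $F^{n+1}(x)$, I pick some $\beta \in F^n(u)$ with $\delta^n_F(\beta) = \alpha$, and set
\[ \tau^{n+1}_x(\alpha) := \delta^n_G(\tau^n_u(\beta)) \in G^{n+1}(x). \]

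The crux of the proof will be to check that this assignment does not depend on the choice of $(p,\beta)$. Given two choices $(p_i \colon y_i \twoheadrightarrow x,\, \beta_i \in F^n(u_i))$ for $i=1,2$, my approach is to form the pullback $Y := y_1 \times_x y_2$; since projections are stable under pullback and composition, $Y$ also projects onto $x$, producing an exact sequence $U \rightarrowtail Y \twoheadrightarrow x$ with induced morphisms $U \to u_i$. The pullbacks of $\beta_1,\beta_2$ to $F^n(U)$ both map to $\alpha$ under $\delta^n_F$, so by naturality of the connecting homomorphisms (in both $F$ and $G$) the verification reduces to the case $y_1=y_2=y$ and $u_1=u_2=u$. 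In that case $\beta_1-\beta_2$ lies in $\ker\delta^n_F = \mathrm{im}(F^n(y)\to F^n(u))$ by exactness, and applying $\tau^n$ followed by $\delta^n_G$ yields zero by exactness of the $G$-sequence.

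Once well-definedness has been secured, additivity of $\tau^{n+1}_x$ is automatic, and naturality in $x$ follows by pulling back the effacement $p$ along any $f \colon x' \to x$ (projections pull back to projections) and invoking naturality of the connecting maps. Compatibility with $\delta$ is essentially built into the definition: for any exact sequence $a \rightarrowtail b \twoheadrightarrow c$ and $\gamma \in F^n(a)$, exactness of the $F$-sequence at $F^{n+1}(c)$ says precisely that $b \twoheadrightarrow c$ effaces $\delta^n_F(\gamma)$, so our construction evaluates to $\delta^n_G(\tau^n(\gamma))$ by design. Uniqueness is then forced: any extension must send $\delta^n_F(\beta)$ to $\delta^n_G(\tau^n(\beta))$, and weak effaceability of $F^{n+1}$ guarantees that every element of $F^{n+1}(x)$ arises in this form.

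The main obstacle I anticipate is carrying out the pullback/common-effacement diagram chase cleanly enough to really conclude independence of choices; everything else is formal bookkeeping using exactness of the two long exact sequences together with the naturality of their connecting maps. It is worth noting that no finite colimit or non-formal property of $\cE$ is used beyond the existence of pullbacks of projections, so the argument is purely axiomatic in the exact structure.
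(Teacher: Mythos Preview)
Your proposal is correct and follows essentially the same classical Grothendieck--Buchsbaum argument as the paper. The only cosmetic difference is that, to compare two choices of effacing projection, you use the fibre product $y_1 \times_x y_2$ while the paper uses the direct sum $y_1 \oplus y_2$ with the map $p_1 + p_2$; both produce a common effacement dominating the given ones, and the remainder of the verification (well-definedness, additivity, naturality, compatibility with $\delta$, uniqueness) is identical.
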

\begin{proof}
    Let $G$ be an arbitrary $\delta$-functor on $\cE$ and suppose that $\tau \colon F \Rightarrow G$ is a transformation of $\delta$-functors.
    Consider $\alpha \in F^{n+1}(x)$.
    Since $F^{n+1}$ is weakly effaceable, there exists an exact sequence $x'' \overset{i}{\rightarrowtail} x' \overset{p}{\twoheadrightarrow} x$ such that $p^*\alpha = 0$.
    From the commutativity of the diagram
    \[\begin{tikzcd}
        F^n(x'')\ar[r, "\delta^n"]\ar[d, "\tau^n_{x''}"] & F^{n+1}(x)\ar[r, "p^*"]\ar[d, " \tau^{n+1}_x"] & F^{n+1}(x')\ar[d, "\tau^{n+1}_{x'}"] \\
        G^n(x'')\ar[r, "\delta^n"] & G^{n+1}(x)\ar[r, "p^*"] & G^{n+1}(x')
    \end{tikzcd}\]
    and the exactness of the first row, it follows that there exists an element $\beta \in F^n(x'')$ with $\delta^n(\beta) = \alpha$.
    Hence $\tau^{n+1}_x(\alpha) = \delta^n(\tau^n_{x''}(\beta))$.
    By induction, we conclude that $\tau$ is uniquely determined by $\tau^0$.

    Moreover, this argument shows that if $\tau^n$ is already defined, then $\tau^{n+1}_x(\alpha) := \delta^n(\tau^n_{x''}(\beta))$ is the only possible definition for $\tau^{n+1}_x$.
    We need to show that this is well-defined and additive; compatibility with the coboundary maps $\delta^n$ holds by definition.

    If $\beta$ and $\gamma$ are two preimages of $\alpha$ under $\delta^n$, then their difference lies in the image of $F^n(x') \to F^n(x'')$.
    Since $\tau^n$ is a natural transformation, exactness of the lower row in the above diagram implies $\delta^n(\tau^n_{x''}(\beta - \gamma)) = 0$.

    Suppose that $y'' \overset{j}{\rightarrowtail} y' \overset{q}{\twoheadrightarrow} x$ is another exact sequence with $q^*\alpha = 0$.
    Since $F^{n+1}$ is additive, we then also have $(p + q)^*\alpha = 0 \in F^{n+1}(x' \oplus y')$.
    Let $\beta$ denote a preimage of $\alpha$ in $F^n(\fib(p+q))$, and define $\beta_x$ and $\beta_y$ to be the image of $\beta$ in $F^n(x'')$ and $F^n(y'')$, respectively.
    Then it follows from naturality of $\tau^n$ that
    \[ \delta^n(\tau^n_{x''}(\beta_x)) = \delta^n(\tau^n_{\fib(p+q)}(\beta)) = \delta^n(\tau^n_{y''}(\beta_y)).\]
    This shows that $\tau^{n+1}_x$ is well-defined.
    Additivity of $\tau^{n+1}_x$ follows immediately since the preceding discussion implies that we may choose the sum of two given preimages as a preimage of a sum in $F^{n+1}(x)$.

    We are left with showing that $\tau^{n+1}$ is a natural transformation.
    Let $f \colon x \to y$ be a morphism and let $\alpha \in F^n(y)$.
    Choose a projection $q \colon y'\twoheadrightarrow y$ such that $q^*\alpha = 0$,
    and pick a preimage $\beta$ of $\alpha$ under $\delta^n$.
    Define $p \colon x':= x \times_y y' \to x$ to be the pullback of $p$ along $f$, and let $f' \colon x' \to y'$ be the second structure map so that we obtain a morphism of exact sequences
    \[\begin{tikzcd}
        x''\ar[r, rightarrowtail]\ar[d, "f''"] & x'\ar[r, twoheadrightarrow, "p"]\ar[d, "f'"] & x\ar[d, "f"] \\
        y''\ar[r, rightarrowtail] & y'\ar[r, twoheadrightarrow, "q"] & y
    \end{tikzcd}\]
    Then the naturality of $\delta^n$ implies that $\delta^n((f'')^*\beta) = f^*\delta^n(\beta) = f^*\alpha$.
    From the naturality of both $\tau^n$ and $\delta^n$, we conclude that
    \[ \tau^{n+1}_x(f^*\alpha) = \delta^n(\tau^n_{x''}((f'')^*\beta)) = \delta^n((f'')^*\tau^n_{y''}(\beta)) = f^*\delta^n(\tau^n_{y''}(\beta)) = f^*\tau^{n+1}_y(\alpha).\qedhere \]
\end{proof}

\begin{lem}\label[cor]{cor:rhom-effacebale}
    Let $\cE$ be an exact category.
    Then
    \[ \RHom^n_\cE(-,y) := \pi_0\Hom_{\stab(\cE)}(-,\Sigma^n j(y)) \colon \cE^\op \to \Ab \]
    is weakly effaceable for every $n \geq 1$.
\end{lem}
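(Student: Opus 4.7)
The plan is to represent a given class $\alpha \in \pi_0 \Hom_{\stab(\cE)}(j(x), \Sigma^n j(y))$ by a zig-zag in $\pshadd^\f(\cE)$ and then reduce to the weak effaceability of objects in $\eff(\cE)$ established in \cref{prop:elementary-acyclics}. Mimicking the argument in \cref{lem:j-left-special}, I would invoke \cite[Proposition~4.6]{klemenc:stablehull} to obtain a zig-zag
\[ \yo(x) \xrightarrow{d} G \xleftarrow{s} \Sigma^n \yo(y) \]
in $\pshadd^\f(\cE)$ representing $\alpha$, with $\cofib(s) \in \Ac(\cE)$.

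Since $G$ is a connective presheaf of spectra and $\Sigma^n\yo(y)$ is $(n-1)$-connected for $n \geq 1$, the long exact sequence associated to the fibre sequence $\Sigma^n\yo(y) \to G \to \cofib(s)$ shows that $\cofib(s)$ is also connective. Combined with acyclicity and \cref{prop:acyclics}, this identifies $\pi_0\cofib(s)$ with an object of $\eff(\cE)$. The composite $\yo(x) \xrightarrow{d} G \to \cofib(s)$ is classified, via Yoneda, by an element of $\pi_0\cofib(s)(x)$, so the weak effaceability given by \cref{prop:elementary-acyclics} provides a projection $p \colon x' \twoheadrightarrow x$ in $\cE$ annihilating this element. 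Connectivity of $\cofib(s)$ then implies that the composite $\yo(x') \to \cofib(s)$ is nullhomotopic in $\stabadd(\cE)$, so $d \circ \yo(p)$ lifts along $s$ to some map $\beta \colon \yo(x') \to \Sigma^n \yo(y)$. By construction, $p^*\alpha$ is represented in $\stab(\cE)$ by the image of $\beta$ under the Verdier projection.

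To conclude, observe that $\Sigma^n\yo(y)$ being $(n-1)$-connected forces $\pi_0\Hom_{\stabadd(\cE)}(\yo(x'), \Sigma^n\yo(y)) = 0$ for $n \geq 1$, so $\beta$ is already nullhomotopic in $\stabadd(\cE)$ and therefore $p^*\alpha = 0$ in $\stab(\cE)$. The main obstacle is securing the zig-zag representation with $G$ in $\pshadd^\f(\cE)$ rather than merely in $\stabadd(\cE)$, since the connectivity arguments driving the remaining steps depend crucially on this. Fortunately this is exactly the content of \cite[Proposition~4.6]{klemenc:stablehull}, already exploited in \cref{lem:j-left-special}.
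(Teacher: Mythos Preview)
Your proof is correct and follows the same strategy as the paper's: represent the class by a zig-zag via the calculus of fractions, efface the obstruction in $\pi_0\cofib(s)$ using \cref{prop:acyclics}, lift along $s$, and then use the $n$-connectivity of $\Sigma^n\yo(y)$ to conclude the lift is null. The connectivity argument for $\cofib(s)$ (and hence the insistence on $G \in \pshadd^\f(\cE)$ via \cite[Proposition~4.6]{klemenc:stablehull}) is actually superfluous, since the Yoneda lemma already identifies $\pi_0\Hom_{\stabadd(\cE)}(\yo(x'),\cofib(s)) \cong \pi_0\cofib(s)(x')$ regardless of connectivity---the paper's proof accordingly works with an arbitrary zig-zag in $\stabadd(\cE)$.
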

\begin{proof}
    The calculus of fractions formula for Verdier quotients implies that an arbitrary morphism $j(x) \to \Sigma^n j(y)$ can be represented by a zig-zag
    \[ \yo(x) \xrightarrow{f} Y \xleftarrow{s} \Sigma^n\yo(y) \]
    such that $\cofib(s)$ is acyclic.
    Since $\pi_0\cofib(s)$ is weakly effaceable by \cref{prop:acyclics}, there exists a projection $p \colon x' \twoheadrightarrow x$ in $\cE$ such that $f \circ p$ factors over $\Sigma^n\yo(y)$.
    Suspensions of additive presheaves are computed pointwise, so $\Sigma^n\yo(y)$ is $n$-connective. This implies that every morphism $\yo(x') \to \Sigma^n\yo(y)$ is trivial.
\end{proof}

\begin{cor}\label[cor]{cor:rhom-universal}
 Let $\cE$ be an exact category. Then the sequence
 \[ \RHom_\cE(-,y) := \left\{ \RHom^n_\cE(-,y) \right\}_{n \geq 0} \]
 refines to a universal $\delta$-functor.
\end{cor}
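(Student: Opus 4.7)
The plan is to build the $\delta$-functor structure directly from the fact that $j \colon \cE \to \stab(\cE)$ sends exact sequences to cofibre sequences, which is built into the construction of $\stab(\cE)$ and recorded in \cref{thm:klemenc}. Once this is in place, universality will follow formally by combining \cref{lem:universal-delta} with \cref{cor:rhom-effacebale}.

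Concretely, given an exact sequence $x \rightarrowtail y' \twoheadrightarrow z$ in $\cE$, I would apply the contravariant exact functor $\hom_{\stab(\cE)}(-, j(y)) \colon \stab(\cE)^\op \to \Sp$ to the resulting cofibre sequence $j(x) \to j(y') \to j(z)$ to obtain a fibre sequence of spectra
\[ \hom_{\stab(\cE)}(j(z), j(y)) \to \hom_{\stab(\cE)}(j(y'), j(y)) \to \hom_{\stab(\cE)}(j(x), j(y)). \]
Passing to the long exact sequence on homotopy groups and using the identification $\pi_{-n}\hom_{\stab(\cE)}(j(w), j(y)) = \RHom^n_\cE(w,y)$ produces exactly the long exact sequence demanded by the first $\delta$-functor axiom. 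The boundary maps $\delta^n \colon \RHom^n_\cE(x,y) \to \RHom^{n+1}_\cE(z,y)$ are read off from this fibre sequence.

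For naturality in morphisms of exact sequences, I would observe that a morphism of exact sequences in $\cE$ becomes a morphism of cofibre sequences in $\stab(\cE)$ after applying $j$, and hence a morphism of fibre sequences of mapping spectra after applying $\hom_{\stab(\cE)}(-, j(y))$; the required commuting squares for $\delta^n$ are then instances of the functoriality of the boundary map of a fibre sequence. This establishes that $\RHom_\cE(-,y)$ is a $\delta$-functor.

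For universality, \cref{cor:rhom-effacebale} provides exactly the hypothesis needed to invoke \cref{lem:universal-delta}: $\RHom^n_\cE(-,y)$ is weakly effaceable for every $n \geq 1$. I do not anticipate a serious obstacle; the only mildly delicate point is writing out the naturality of $\delta^n$ in the abstract form demanded by the $\delta$-functor definition, but this is entirely controlled by the universal properties of cofibre and fibre sequences in stable $\infty$-categories together with the fact that $j$ is an exact functor.
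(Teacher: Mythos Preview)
Your proposal is correct and follows essentially the same approach as the paper: construct the $\delta$-functor structure from the long exact sequence on homotopy groups of the fibre sequence of mapping spectra obtained by applying $\hom_{\stab(\cE)}(-,j(y))$ to the cofibre sequence $j(x) \to j(y') \to j(z)$, then conclude universality from \cref{cor:rhom-effacebale} and \cref{lem:universal-delta}. Your treatment of naturality via morphisms of cofibre sequences is slightly more explicit than the paper's, but the argument is the same.
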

\begin{proof}
 Let $x_0 \overset{i}{\rightarrowtail} x_1 \overset{p}{\twoheadrightarrow} x_2$ be an exact sequence in $\cE$.
 Then $j(x_0) \to j(x_1) \to j(x_2)$ is a fibre sequence in $\stab(\cE)$, and we obtain 
 \[ \delta^n \colon \RHom^n_\cE(x_2,y) \to \RHom^{n+1}_\cE(x_0,y) \]
 as the boundary map on the homotopy groups of the induced fibre sequence of mapping spectra
 \[ \hom_{\stab(\cE)}(j(x_0),j(y)) \to \hom_{\stab(\cE)}(j(x_1),j(y)) \to \hom_{\stab(\cE)}(j(x_2),j(z)). \]
 This equips $\RHom_\cE(-,y)$ with the structure of a $\delta$-functor.
\cref{cor:rhom-effacebale} shows that $\RHom^n_\cE(-,y)$ is weakly effaceable for all $n \geq 1$, so universality follows from \cref{lem:universal-delta}.
\end{proof}

The following generalises \cite[Theorem~12.1]{keller:derived-cats}.
See also \cite[Expos\'e VIII, Proposition~4.1]{SGA5} for the case of abelian categories.

\begin{theorem}[Keller's criterion]\label[theorem]{thm:keller}
 Let $\cE$ be an exact category. 
 If $\cU \subseteq \cE$ is a left special subcategory of $\cE$, then the induced functor $\stab(\cU) \to \stab(\cE)$ is fully faithful.
\end{theorem}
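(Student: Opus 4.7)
The strategy is to compare mapping spectra directly. Given $u, v \in \cU$, I want the induced map of spectra
\[ \hom_{\stab(\cU)}(j(u), j(v)) \longrightarrow \hom_{\stab(\cE)}(j(u), j(v)) \]
to be an equivalence. Since $\cU \subseteq \cE$ is a full subcategory and both Gabriel--Quillen embeddings are fully faithful by \cref{thm:klemenc}, both spectra have the same $\Omega^\infty$, namely $\Hom_\cU(u,v) \simeq \Hom_\cE(u,v)$, and the comparison respects this identification. Hence the map is automatically an isomorphism on $\pi_n$ for all $n \geq 0$, and it suffices to check it in negative degrees, i.e.\ to show that for each $v \in \cU$ the natural transformation
\[ \tau \colon \RHom_\cU(-, v) \longrightarrow \RHom_\cE(-, v)|_\cU \]
of functors $\cU^\op \to \Ab$ is an isomorphism in every positive degree.

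Both sides refine to $\delta$-functors on $\cU$: the left one by \cref{cor:rhom-universal}, the right one by restricting along $\cU \subseteq \cE$, which is legitimate because any exact sequence in $\cU$ is exact in $\cE$. Moreover, $\tau$ upgrades to a transformation of $\delta$-functors, because it is induced by the exact functor $\stab(\cU) \to \stab(\cE)$, and in degree zero $\tau^0$ is the identity by the full-subcategory assumption. Since $\RHom_\cU(-, v)$ is universal, it would follow from universality of $\RHom_\cE(-, v)|_\cU$ that $\tau$ is the unique extension of the identity, hence an isomorphism.

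By \cref{lem:universal-delta} it therefore suffices to show that $\RHom^n_\cE(-, v)|_\cU$ is weakly effaceable on $\cU$ for every $n \geq 1$, and this is precisely the place where the left special hypothesis enters. Given $x \in \cU$ and $\alpha \in \RHom^n_\cE(x, v)$, \cref{cor:rhom-effacebale} produces a projection $p \colon y \twoheadrightarrow x$ in $\cE$ with $p^*\alpha = 0$. Because $\cU$ is left special in $\cE$, there exists a morphism $f \colon u \to y$ with $u \in \cU$ such that $q := p \circ f \colon u \twoheadrightarrow x$ is a projection in $\cU$, and then $q^*\alpha = f^* p^* \alpha = 0$. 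This establishes the required effacement and completes the argument.

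The only genuine subtlety is verifying that the left special condition is exactly strong enough to upgrade the ambient weak effaceability from \cref{cor:rhom-effacebale} (which gives projections in $\cE$) into weak effaceability with projections belonging to $\cU$; once this is done, the universal $\delta$-functor formalism mechanically produces the isomorphism in all positive degrees. I expect no other obstacles, as the reduction to negative homotopy groups is formal and the $\delta$-functor machinery is ready to hand.
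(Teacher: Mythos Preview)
Your proposal is correct and follows the same approach as the paper's proof: reduce to showing that the comparison of mapping spectra between representables $j(u), j(v)$ is an equivalence, handle non-negative degrees via full faithfulness of the Gabriel--Quillen embeddings, and identify the negative degrees via the universal $\delta$-functor formalism, using the left special hypothesis exactly to make $\RHom^n_\cE(-,v)|_\cU$ weakly effaceable. The one step you leave implicit---that an equivalence on mapping spectra between representables suffices for full faithfulness of $\stab(\cU) \to \stab(\cE)$---is handled in the paper by observing that the locus of $V \in \stab(\cU)$ for which the comparison map $\hom_{\stab(\cU)}(j_\cU(-),V) \to \hom_{\stab(\cE)}(j_\cE(-),\stab(\inc)(V))$ is an equivalence forms a stable subcategory containing $j_\cU(\cU)$.
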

\begin{proof}
    Denote by $\widehat\cU$ the full subcategory of $\stab(\cU)$ given by those objects $U$ such that the induced map
    \[ \hom_{\stab(\cU)}(j_\cU(-),U) \to \hom_{\stab(\cE)}(j_\cE(-),\stab(\inc)(U)) \]
    of mapping spectra is an equivalence.
    Since we require an equivalence of mapping spectra, $\widehat\cU$ is a full stable subcategory of $\stab(\cU)$.
    In order to show $\widehat\cU = \stab(\cU)$, it is therefore enough to show that $\widehat\cU$ contains the essential image of $j_\cU$.

    Let $v \in \cU$, so that $\stab(\inc)(j_\cU(v)) \simeq j_\cE(v)$. Then the induced map
    \[ \Hom_{\stab(\cU)}(j_\cU(-),j_\cU(v)) \to \Hom_{\stab(\cE)}(j_\cE(-),j_\cE(v)) \]
    of mapping anima is an equivalence because $j$ is assumed to be fully faithful.
    We are left with showing that this map also induces an isomorphism on negative homotopy groups.
    Since $\cU$ is a left special subcategory of $\cE$, the restricted functors
    \[ \RHom^n_\cE(\inc(-),v) \colon \cU^\op \to \Ab \]
    are weakly effaceable for $n \geq 1$ by \cref{cor:rhom-effacebale}, so $\inc^*\RHom$ is a universal $\delta$-functor on $\cU$ by \cref{lem:universal-delta}.
    As $\RHom^0_\cU(-,v) \simeq \inc^*\RHom^0_\cE(-,v)$ by assumption, it follows that $\widehat\cU$ contains the essential image of $j_\cU$.
\end{proof}

\begin{cor}[{\cite[Theorem~2.9]{saunier:heart}}]\label[cor]{cor:counit-equiv}
    Let $\cC$ be a stable category with a bounded heart structure.
    Then the induced functor $\stab(\cC^\heartsuit) \to \cC$ is an equivalence.
\end{cor}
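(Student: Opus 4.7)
By the universal property of the stable envelope (\cref{thm:klemenc}) applied to the exact inclusion of $\cC^\heartsuit$ into $\cC$ (where $\cC$ carries its maximal exact structure, so that $\stab(\cC)\simeq\cC$), I obtain an exact functor $F \colon \stab(\cC^\heartsuit) \to \cC$. The plan is to show that $F$ is an equivalence by checking essential surjectivity and fully faithfulness separately.

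For essential surjectivity, I would exploit the boundedness of the heart structure. Every $x \in \cC$ lies in $\cC_{\geq -n} \cap \cC_{\leq n}$ for some $n \geq 0$. A heart decomposition $x_{\leq 0} \to x \to \Sigma x_{\geq 0}$ splits $x$ into a piece in $\cC_{\leq 0}$ and a shift of a piece in $\cC_{\geq 0}$, and iterating these decompositions --- with boundedness guaranteeing termination and the closure properties of $\cC_{\geq 0}$ under finite colimits and of $\cC_{\leq 0}$ under finite limits controlling the intermediate stages --- realises $x$ as an iterated extension of shifted heart objects. Since the essential image of $F$ is closed under shifts, extensions, and cofibres and contains $\cC^\heartsuit$, it must therefore equal $\cC$.

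For fully faithfulness, I would apply Keller's criterion \cref{thm:keller}: it suffices to show that $\cC^\heartsuit$ is a left special subcategory of $\cC$. Given a morphism $p \colon x \to v$ with $v \in \cC^\heartsuit$ and $x \in \cC$ arbitrary (every morphism being a projection in the maximal exact structure), I need to produce $u \in \cC^\heartsuit$ and $f \colon u \to x$ so that the fibre of $p \circ f$ lies in $\cC^\heartsuit$. The idea is to build $u$ iteratively using heart decompositions. A first move is to take a heart decomposition $\fib(p)_{\leq 0} \to \fib(p) \to \Sigma \fib(p)_{\geq 0}$ of the fibre; an octahedral computation yields a replacement of $p$ by a morphism whose fibre has improved connectivity (landing in $\cC_{\geq 1}$). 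Iterating such refinements in both directions, and using boundedness of the heart structure to ensure the process terminates in finitely many steps, produces the desired $u \in \cC^\heartsuit$ with fibre in $\cC^\heartsuit$. Combined with essential surjectivity, Keller's criterion then yields the equivalence.

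The main obstacle is the verification of left specialness itself. In contrast to a t-structure, a heart structure comes with no functorial truncations and no orthogonality relation between $\cC_{\geq 0}$ and $\cC_{\leq 0}$, so the existence of a heart decomposition at each stage is only asserted, not canonical. The iterative construction must therefore be set up with care, simultaneously controlling both the upper and lower bounds of the relevant fibres, and leveraging both the bounded-above and bounded-below hypotheses to guarantee that the approximations stabilise in $\cC^\heartsuit$ after finitely many steps.
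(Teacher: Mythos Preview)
Your essential surjectivity argument is fine and matches the paper's. The gap is in the fully faithfulness step: $\cC^\heartsuit$ is \emph{not} left special in $\cC$ with the maximal exact structure, so Keller's criterion cannot be applied as you propose.

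Here is the obstruction. Take $\cC = \stabadd(\cA)$ for a nontrivial additive category $\cA$; the canonical weight structure is in particular a bounded heart structure with $\cC^\heartsuit \simeq \cA^\flat$. For any nonzero $a \in \cC^\heartsuit$, the morphism $0 \to a$ is a projection in the maximal exact structure on $\cC$. Any candidate $f \colon u \to 0$ with $u \in \cC^\heartsuit$ is the zero map, and the fibre of the composite $u \to 0 \to a$ is $u \oplus \Sigma^{-1}a$. For this to lie in $\cC^\heartsuit$ one would need $\Sigma^{-1}a$ to be a summand of a heart object; but $\Sigma^{-1}a \in \cC_{\leq -1}$ and, for a weight structure, $\cC_{\geq 0}$ is closed under retracts, forcing $\Sigma^{-1}a \in \cC_{\geq 0} \cap \cC_{\leq -1}$. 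Concretely, for $\cA$ the category of finite-dimensional $k$-vector spaces this intersection is zero. Hence no such $u$ exists, and your iterative procedure cannot terminate: whatever refinements you perform, the required factorisation through $x = 0$ forces the fibre to contain $\Sigma^{-1}a$ as a summand. The difficulty you flag at the end is therefore not merely technical but fatal.

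The paper circumvents this by applying Keller's criterion to the inclusion $\cC^\heartsuit \subseteq \cC_{\geq 0}$ rather than $\cC^\heartsuit \subseteq \cC$. The point is that $\cC_{\geq 0}$ carries the exact structure inherited as an extension-closed subcategory of $\cC$, in which a morphism is a projection only if its fibre lies in $\cC_{\geq 0}$; in particular $0 \to a$ is \emph{not} a projection there. One then shows that $\cC^\heartsuit \subseteq \cC_{\geq 0}$ is left special by exhibiting it as a filtered union of resolving inclusions $\cC_{\geq 0} \cap \cC_{\leq n} \subseteq \cC_{\geq 0} \cap \cC_{\leq n+1}$ (using bounded-above), each of which is left special. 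Keller's criterion yields that $\stab(\cC^\heartsuit) \to \stab(\cC_{\geq 0})$ is fully faithful, and since $\cC_{\geq 0}$ is prestable with Spanier--Whitehead stabilisation $\cC$ (by bounded-below), one has $\stab(\cC_{\geq 0}) \simeq \cC$.
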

\begin{proof}
    Recall from \cite[Definition~1.3]{saunier:heart} that the inclusion $\cA \to \cB$ of an extension-closed subcategory into an exact category is \emph{resolving} if the following holds:
    \begin{enumerate}
        \item for every exact sequence $x \rightarrowtail y \twoheadrightarrow z$ with $y \in \cA$ we also have $x \in \cA$;
        \item for every object $z \in \cB$ there exists a projection $y \twoheadrightarrow z$ with $y \in \cA$. 
    \end{enumerate}
    In particular, such a subcategory is left special: given an exact sequence $x \rightarrowtail y \twoheadrightarrow z$ with $z \in \cA$, pick a projection $y' \twoheadrightarrow y$ with $y \in \cA$; then the first condition implies that the composite $y' \twoheadrightarrow z$ is a projection in $\cA$.

    The proof of \cite[Theorem~2.9]{saunier:heart} shows that $\cC^\heartsuit \to \cC_{\geq 0}$ is a composition of resolving functors.
    Since the heart structure is bounded, it follows by induction that $\cC^\heartsuit$ is left special in $\cC_{\geq 0}$.
    \cref{thm:keller} implies that $\stab(\cC^\heartsuit) \to \cC$ is fully faithful.
    Since $\cC$ is generated by $\cC^\heartsuit$, it follows that this functor is also essentially surjective.
\end{proof}

There is an evident dual version of Keller's criterion.

\begin{defn}
    An extension-closed subcategory $\cU$ of an exact category $\cE$ is \emph{right special} if $\cU^\op \subseteq \cE^\op$ is left special.
\end{defn}

\begin{cor}\label[cor]{cor:keller-right-special}
    If $\cU \subseteq \cE$ is a right special subcategory of the exact category $\cE$, then $\stab(\cU) \to \stab(\cE)$ is fully faithful.
\end{cor}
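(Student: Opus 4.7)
The plan is to deduce this directly from Keller's criterion (\cref{thm:keller}) by passing to opposite categories, using the compatibility of $\stab$ with taking opposites established in \cref{cor:stab-op}.

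First, by the very definition of right special, the inclusion $\cU^\op \subseteq \cE^\op$ is left special. Applying \cref{thm:keller} to this inclusion yields that the induced functor
\[ \stab(\cU^\op) \to \stab(\cE^\op) \]
is fully faithful.

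Next, I would invoke \cref{cor:stab-op} to identify $\stab(\cU^\op) \simeq \stab(\cU)^\op$ and $\stab(\cE^\op) \simeq \stab(\cE)^\op$ naturally in the inclusion functor. Under these identifications, the fully faithful functor above becomes the opposite of the canonical functor $\stab(\cU) \to \stab(\cE)$. Since a functor between (stable) categories is fully faithful if and only if its opposite is, this gives the desired conclusion.

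There is no real obstacle here; the only thing worth double-checking is that the equivalence in \cref{cor:stab-op} is natural enough to intertwine the two functors induced by the inclusion $\cU \subseteq \cE$, but this is immediate from the universal property of $\stab$ used to produce the equivalence in the first place, applied to both $\cU$ and $\cE$ simultaneously.
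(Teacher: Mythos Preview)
Your proof is correct and follows exactly the paper's approach: the paper's proof is the single sentence ``This follows from \cref{thm:keller} and \cref{cor:stab-op},'' which you have spelled out in detail.
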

\begin{proof}
    This follows from \cref{thm:keller} and \cref{cor:stab-op}.
\end{proof}

\begin{rem}
    It is not true without further assumptions that the inclusion of an extension-closed subcategory induces a fully faithful functor on bounded derived categories.
    Examples are provided by stable $\infty$-categories which admit a t-structure, but do not contain the bounded derived category of their heart as a full subcategory.
    Concretely, one can take for $\cC$ the stable $\infty$-category of bounded spectra with finitely generated homotopy groups and for $\cU$ the full subcategory of Eilenberg--MacLane spectra of finitely generated abelian groups.

    Moreover, there exist extension-closed subcategories which are neither left nor right special, but still induce fully faithful functors on stabilisations.
   For example, consider an additive category $\cA$ as an extension-closed subcategory of its stabilisation $\stab(\cA)$.
   For any $a \in \cA$, the map $0 \to a$ is a projection in $\stab(\cA)$ because $\stab(\cA)$ is stable.
   However, no projection $b \twoheadrightarrow a$ in $\cA$ factors over $0$ unless $a \simeq 0$---this also shows that it is important to consider only the positive part of the heart structure on $\stab(\cA)$ in \cref{lem:j-left-special}.
   Similarly, no inclusion in $\cA$ can factor over the zero map $0 \colon a \to 0$ for non-trivial $a$.
   However, the induced functor $\stab(\cA) \to \stab(\stab(\cA)) \simeq \stab(\cA)$ is an equivalence.
\end{rem}

One can also use \cref{thm:keller} together with the Gabriel--Quillen embedding to show that every exact category admits a weak idempotent completion.

\begin{lem}\label[lem]{lem:stab-weakly-idempotent-complete}
    Let $\cE$ be an exact category. Define
    \[ \cE^\flat := \{ X \in \stab(\cE) \mid \exists\,x,x'\in\cE \colon X \oplus j(x') \simeq j(x) \}. \]
    Then $\cE^\flat$ is an extension-closed subcategory of $\stab(\cE)$, and the restriction functor
    \[ \Fun^\exct(\cE^\flat,\cD) \to \Fun^\exct(\cE,\cD) \]
    is an equivalence for every weakly idempotent complete exact category $\cD$.
    
    In particular, the induced exact functor $\stab(\cE) \to \stab(\cE^\flat)$ is an equivalence.
\end{lem}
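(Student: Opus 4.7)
The plan is to first verify $\cE^\flat$ is extension-closed in $\stab(\cE)$, then prove the ``in particular'' clause via \cref{cor:keller-right-special} by checking $\cE$ is right special in $\cE^\flat$, and finally deduce the universal property using \cref{thm:klemenc} and the weak idempotent completeness of $\cD$. The main technical obstacle is the right-special verification.

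For extension-closedness, given a cofibre sequence $X \to Y \to Z$ in $\stab(\cE)$ with witnesses $X \oplus j(x') \simeq j(x)$ and $Z \oplus j(z') \simeq j(z)$, adding the trivial cofibre sequences $j(x') \to j(x') \to 0$ and $0 \to j(z') \to j(z')$ term-wise yields a cofibre sequence
\[ j(x) \to Y \oplus j(x' \oplus z') \to j(z) \]
in $\stab(\cE)$. By \cref{thm:gabriel-quillen} the middle term is equivalent to some $j(y)$ with $y \in \cE$, whence $Y \in \cE^\flat$; \cref{lem:ext-closed-subcat} then endows $\cE^\flat$ with an exact structure containing $\cE$ as an exact subcategory.

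For the ``in particular'' clause, I would check $\cE$ is right special in $\cE^\flat$. Given an inclusion $i \colon v \rightarrowtail x$ in $\cE^\flat$ with $v \in \cE$, using that $x/v \in \cE^\flat$ one picks $y, y' \in \cE$ with $(x/v) \oplus y' \simeq y$, and given a witnessing $x \oplus x'' \simeq x_0$ one sets $u := x_0 \oplus y' \in \cE$ with $f \colon x \hookrightarrow x_0 \oplus y'$ the canonical inclusion. A direct cofibre calculation in $\stab(\cE)$ gives $\cofib(f \circ i) \simeq (x/v) \oplus x'' \oplus y' \simeq x'' \oplus y \in \cE$, so $f \circ i$ is an inclusion in $\cE$. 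Keller's criterion (\cref{cor:keller-right-special}) then gives fully faithfulness of the functor $\alpha \colon \stab(\cE) \to \stab(\cE^\flat)$ induced by $\cE \hookrightarrow \cE^\flat$; for essential surjectivity, any $X \in \cE^\flat$ admits a splitting $x \simeq x' \oplus X$ in $\cE^\flat$, and uniqueness of complements in the stable category $\stab(\cE^\flat)$ shows $\alpha(X) \simeq j_{\cE^\flat}(X)$, whence generation of $\stab(\cE^\flat)$ by representables finishes the argument.

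For the universal property, the equivalence $\alpha$ combined with two instances of \cref{thm:klemenc} gives an equivalence
\[ \Fun^\exct(\cE^\flat, \stab(\cD)) \simeq \Fun^\exct(\cE, \stab(\cD)) \]
induced by restriction along $\cE \hookrightarrow \cE^\flat$. Since $\cD \hookrightarrow \stab(\cD)$ is fully faithful, it suffices to show an exact functor $g \colon \cE^\flat \to \stab(\cD)$ lands in $\cD$ iff $g|_\cE$ does. For the non-trivial direction, given $X \in \cE^\flat$ with witness $X \oplus j(x') \simeq j(x)$, the decomposition $g(X) \oplus g(x') \simeq g(x)$ in $\stab(\cD)$ exhibits $g(X)$ as a complement of $g(x') \in \cD$ inside $g(x) \in \cD$; weak idempotent completeness of $\cD$ together with uniqueness of complements in $\stab(\cD)$ yields $g(X) \in \cD$.
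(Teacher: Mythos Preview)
Your proof is correct, and the overall architecture matches the paper's: establish extension-closedness, invoke Keller's criterion to get $\stab(\cE) \simeq \stab(\cE^\flat)$, then deduce the universal property via a pullback of functor categories using weak idempotent completeness of $\cD$.

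The one genuine difference is in how you verify the hypothesis of Keller's criterion. The paper observes that $\cE \subseteq \cE^\flat \subseteq \stab_{\geq 0}(\cE)$ and that $\cE$ is \emph{left} special in $\stab_{\geq 0}(\cE)$ by \cref{lem:j-left-special}; left speciality in the intermediate category $\cE^\flat$ then follows formally, since any projection in $\cE^\flat$ is in particular a projection in $\stab_{\geq 0}(\cE)$. You instead give a direct, self-contained verification that $\cE$ is \emph{right} special in $\cE^\flat$, by explicitly constructing the map $f \colon x \to x_0 \oplus y'$ and computing the cofibre via the octahedral axiom. The paper's route is shorter because it recycles \cref{lem:j-left-special}; yours is more elementary in that it avoids appealing to that lemma (and hence to the analysis of the Verdier quotient underlying it). Both are perfectly valid.

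Two minor remarks on presentation. First, your ``uniqueness of complements'' phrasing for essential surjectivity is slightly loose: what makes it work is that the specific split inclusion $j_{\cE^\flat}(x') \hookrightarrow j_{\cE^\flat}(x)$ lies in the essential image of the fully faithful $\alpha$, so its cofibre does too---alternatively, one can simply note that $\stab(\cE) \to \stab(\cE^\flat) \to \stab(\stab(\cE)) \simeq \stab(\cE)$ is the identity, forcing the fully faithful $\alpha$ to be an equivalence. Second, for extension-closedness you should strictly speaking invoke both \cref{lem:stab-positive-closure-properties} and \cref{thm:gabriel-quillen} (to know that extensions of representables in $\stab(\cE)$ land in $\stab_{\geq 0}(\cE)$ and then in $\cE$), as the paper does.
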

\begin{proof}
    Let $X \xrightarrow{i} Y \xrightarrow{p} Z$ be a cofibre sequence in $\stab(\cE)$ with $X,Z \in \cE^\flat$.
    Choose $x,x',z,z' \in \cE$ such that $X \oplus j(x') \simeq j(x)$ and $Z \oplus j(z') \simeq j(z)$.
    Then
    \[ X \oplus x' \oplus 0 \xrightarrow{i \oplus \id \oplus 0} Y \oplus x' \oplus z' \xrightarrow{p \oplus 0 \oplus \id} Z \oplus 0 \oplus z' \]
    is also a cofibre sequence.
    Since $\cE$ is closed under extensions in $\stab(\cE)$ by \cref{lem:stab-positive-closure-properties} and \cref{thm:gabriel-quillen}, it follows that $Y \oplus x' \oplus z'$ also lies in the essential image of $j$.
    Hence $\cE^\flat$ is extension-closed in $\stab(\cE)$.

    Now consider the commutative diagram
    \[\begin{tikzcd}[row sep=1.5em]
        \Fun^\exct(\cE^\flat,\cD)\ar[r]\ar[d] & \Fun^\exct(\cE,\cD)\ar[d] \\
        \Fun^\exct(\cE^\flat,\stab(\cD))\ar[r] & \Fun^\exct(\cE,\stab(\cD)) \\
        \Fun^\exct(\stab(\cE^\flat),\stab(\cD))\ar[r]\ar[u, "\simeq"] & \Fun^\exct(\stab(\cE),\stab(\cD))\ar[u, "\simeq"']
    \end{tikzcd}\]
    The lower vertical morphisms are equivalences by the universal property of $\stab$.
    Since $\cE \subseteq \cE^\flat \subseteq \stab(\cE)$ are inclusions of extension-closed subcategories and $\cE$ is left special in $\stab(\cE)$, it follows that $\cE$ is also left special in $\cE^\flat$.
    \cref{thm:keller} implies that $\stab(\cE) \to \stab(\cE^\flat)$ is an equivalence,
    and it follows that the bottom and middle horizontal arrows are equivalences.

    The upper vertical morphisms are fully faithful.
    Since $\cD$ is weakly idempotent complete, an exact functor $\cE^\flat \to \stab(\cD)$ restricts to an exact functor $\cE \to \cD$ if and only if it restricts to an exact functor $\cE^\flat \to \cD$.
    This means that the upper square is a pullback, so the lemma follows.
\end{proof}


\section{Bounded heart categories}\label[section]{sec:bounded-heart-categories}

This section contains the proofs of \cref{thm:main,thm:gillet-waldhausen}.
Our main goal is to show that $\stab(\cE)$ always carries a bounded heart structure.

\begin{defn}
 Let $\cE$ be an exact category and let $X \in \stab(\cE)$.
 \begin{enumerate}
  \item The object $X$ \emph{admits a length $0$ resolution} if it is equivalent to an object in the essential image of $j$;
  \item the object $X$ \emph{admits a length $n+1$ resolution} if there exists a cofibre sequence $X' \to j(x) \to X$ in $\stab(\cE)$ such that $X'$ admits a length $n$ resolution. 
 \end{enumerate}
\end{defn}

\begin{lem}\label[lem]{lem:stab-resolutions}
 For each $X \in \stab_{\geq 0}(\cE)$ there exists some $n \geq 0$ such that $X$ admits a length $n$ resolution.
\end{lem}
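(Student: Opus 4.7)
The plan is to show that $\ccR := \bigcup_{n \geq 0} \ccR_n$, where $\ccR_n$ denotes the class of objects of $\stab(\cE)$ admitting a length-$n$ resolution, coincides with $\stab_{\geq 0}(\cE)$. An induction on $n$ combined with \cref{lem:stab-positive-closure-properties} shows $\ccR \subseteq \stab_{\geq 0}(\cE)$. Conversely, as $\stab_{\geq 0}(\cE)$ is the essential image of $\pshadd^\f(\cE)$ under the colimit-preserving Verdier projection $\stabadd(\cE) \to \stab(\cE)$, and $\pshadd^\f(\cE)$ is generated under finite colimits by the representables, $\stab_{\geq 0}(\cE)$ is the smallest full subcategory of $\stab(\cE)$ that contains $j(\cE) \subseteq \ccR_0$ and is closed under finite colimits in $\stab(\cE)$. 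It therefore suffices to verify that $\ccR$ is closed under finite colimits in $\stab(\cE)$. The zero object ($0 \simeq j(0) \in \ccR_0$), suspensions (via the cofibre sequence $X \to j(0) \to \Sigma X$), and binary coproducts (by induction on $\ell(X) + \ell(Y)$, summing two chosen resolutions to obtain a cofibre sequence $X' \oplus Y' \to j(x \oplus y) \to X \oplus Y$ whose source has strictly smaller total length) are straightforward. Since every pushout fits in a cofibre sequence $F_0 \to F_1 \oplus F_2 \to F_1 \cup_{F_0} F_2$, the crux is closure under cofibres of maps between $\ccR$-objects.

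I would establish this simultaneously with an extension-closure property by mutual induction, working with the two statements \emph{(Cof)} if $f \colon A \to B$ with $A, B \in \ccR$, then $\cofib(f) \in \ccR$, and \emph{(Ext)} if $X \to Y \to Z$ is a cofibre sequence in $\stab(\cE)$ with $X, Z \in \ccR$, then $Y \in \ccR$. The base of (Ext) at $\ell(X) = 0$ proceeds by sub-induction on $\ell(Z)$: when $\ell(Z) = 0$, the Gabriel--Quillen embedding (\cref{thm:gabriel-quillen}) identifies $Y$ with an object of $j(\cE) \subseteq \ccR_0$; when $\ell(Z) \geq 1$, pulling $Y \to Z$ back along a resolution $j(z) \to Z$ reduces to that case. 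The base of (Cof) at $\ell(A) = 0$ rests on the octahedral identification $\cofib(f) \simeq \cofib(P \to j(b))$ for $P := j(a) \times_B j(b)$: pulling a resolution $B' \to j(b) \to B$ back along $f$ yields a cofibre sequence $B' \to P \to j(a)$, placing $P \in \ccR$ by (Ext) applied with a first term of length $\ell(B') < \ell(B)$, and hence a resolution of $\cofib(f)$ of length $\ell(P) + 1$. For the inductive step of (Ext) with $\ell(X) = m+1$ and $\ell(Z) = 0$, left-specialness (\cref{lem:j-left-special}) lifts the projection $Y \twoheadrightarrow j(z)$ through some $j(a) \to Y$ whose composite $j(a) \to j(z)$ is a projection in $\cE$ with fibre $j(a')$; the induced map of cofibre sequences yields $\cofib(j(a) \to Y) \simeq \cofib(j(a') \to X)$, which lies in $\ccR$ by (Cof) applied to a map from a representable. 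The remaining case $\ell(Z) \geq 1$ of (Ext) reduces to $\ell(Z) = 0$ via pullback, and the inductive step of (Cof) with $\ell(A) \geq 1$ follows from the 3x3 lemma applied to a resolution $A' \to j(a) \to A$: this produces a cofibre sequence $\cofib(g) \to \cofib(f) \to \Sigma^2 A'$, with $g \colon j(a) \to B$, to which (Ext) applies.

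The main obstacle I anticipate is arranging the mutual induction so that it actually descends. A naive induction on $\ell(A) + \ell(B)$ in (Cof) fails because the suspension $\Sigma A'$ arising in 3x3-decompositions may have length equal to $\ell(A)$. The fix is to primarily induct on $\ell(X)$ in (Ext) and to invoke (Cof) only through its $\ell(A) = 0$ base case, which via the pullback argument above only requires (Ext) at strictly smaller $\ell(X)$; the subsidiary induction on $\ell(Z)$ within (Ext) and on $\ell(A)$ within (Cof) then closes without circularity.
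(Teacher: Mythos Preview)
Your approach is much more elaborate than the paper's, which is essentially a one-liner: since $\stab_{\geq 0}(\cE)$ is by definition the essential image of $\stabadd_{\geq 0}(\cE) = \pshadd^\f(\cE)$ under the exact Verdier projection $\stabadd(\cE) \to \stab(\cE)$, it suffices to produce resolutions in $\stabadd(\cE)$; these exist because $\stabadd(\cE)$ carries a bounded weight structure (\cref{prop:weight-structure}), whose proof already shows that every object of $\stabadd_{\geq 0}(\cE)$ receives a $\pi_0$-surjection from a representable. You are instead re-deriving, directly inside $\stab(\cE)$, the closure properties that the paper establishes separately (and only later) in \cref{lem:resolutions-cofibre-sequences}.

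There is a genuine gap in your induction bookkeeping. You claim to ``invoke (Cof) only through its $\ell(A)=0$ base case'' and that ``the remaining case $\ell(Z)\geq 1$ of (Ext) reduces to $\ell(Z)=0$ via pullback''. But carry out that pullback for $\ell(X)=m+1$ and $\ell(Z)=k+1$: with $Z' \to j(z) \to Z$ and $\overline{Y} := Y \times_Z j(z)$ you obtain $X \to \overline{Y} \to j(z)$ (placing $\overline{Y}$ in $\ccR$ by the $\ell(Z)=0$ case) and $Z' \to \overline{Y} \to Y$. Extracting $Y \in \ccR$ from the latter is precisely (Cof) with $\ell(A) = \ell(Z') = k$, not $\ell(A)=0$. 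Feeding this into your $3\times 3$ reduction for (Cof)$(k,\ell(\overline{Y}))$ produces $\cofib(g) \to \cofib(f) \to \Sigma^2 A'$ with $\ell(\cofib(g)) \leq \ell(\overline{Y})$ and $\ell(\Sigma^2 A') = k+1$, so you need (Ext) at $(\leq m+1,\,k+1)$---the very instance you are proving. The primary induction on $\ell(X)$ alone does not break this loop.

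The fix is to induct on $N := \max(\ell(X),\ell(Z))$ simultaneously for (Ext) and (Cof), exactly as in parts~\eqref{lem:resolutions-cofibre-sequences-2} and~\eqref{lem:resolutions-cofibre-sequences-4} of \cref{lem:resolutions-cofibre-sequences}: at level $N+1$, use left-specialness to cover \emph{both} $X$ and $Z$ by representables, assemble a map of cofibre sequences $j(x) \to j(x\oplus\overline{y}) \to j(\overline{y})$ onto $X \to Y \to Z$, and observe that the vertical fibres form a cofibre sequence of objects in $\ccR_N$; this invokes only (Ext) at level $N$. That argument (unlike parts~\eqref{lem:resolutions-cofibre-sequences-1} and~\eqref{lem:resolutions-cofibre-sequences-3}) does not require weak idempotent completeness, so your overall strategy is salvageable---but the paper's route through $\stabadd(\cE)$ avoids all of this.
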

\begin{proof}
	The required resolutions exist already in $\stabadd(\cE)$ because this category carries a bounded weight structure by \cref{prop:weight-structure}.
\end{proof}

\begin{lem}\label[lem]{lem:resolutions-cofibre-sequences}
    Let $\cE$ be a weakly idempotent complete exact category and let $n \geq 0$.
    Consider a cofibre sequence
    \[ X \to Y \to Z \]
    in $\stab_{\geq 0}(\cE)$. Then the following holds:
    \begin{enumerate}
        \item\label{lem:resolutions-cofibre-sequences-1} If $Y$ admits a length $n$ resolution and $Z$ admits a length $n+1$ resolution, then $X$ admits a length $n$ resolution.
        \item\label{lem:resolutions-cofibre-sequences-2} If $X$ and $Z$ admit length $n+1$ resolutions, then $Y$ admits a length $n+1$ resolution.
        \item\label{lem:resolutions-cofibre-sequences-4} If $X$ and $Y$ admit length $n$ resolutions, then $Z$ admits a length $n+1$ resolution.
        \item\label{lem:resolutions-cofibre-sequences-3} If $Y$ and $Z$ admit length $n+1$ resolutions, then $X$ admits a length $n+1$ resolution.
    \end{enumerate}
\end{lem}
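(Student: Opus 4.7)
The plan is to establish all four parts simultaneously by induction on $n$. The central construction is the following: given a cofibre sequence $X \to Y \to Z$ in $\stab_{\geq 0}(\cE)$ and a chosen length $k$ resolution $Z_1 \to j(z_0) \to Z$, form the pullback $W := Y \times_Z j(z_0)$. Since $\stab_{\geq 0}(\cE)$ is closed under extensions by \cref{lem:stab-positive-closure-properties}, the cofibre sequence $Z_1 \to W \to Y$ forces $W$ to lie in $\stab_{\geq 0}(\cE)$, and one additionally obtains a cofibre sequence $X \to W \to j(z_0)$; the dual pullback $W' := X \times_Y j(y_0)$ along a resolution of $Y$ produces cofibre sequences $Y_1 \to W' \to X$ and $W' \to j(y_0) \to Z$. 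These Schanuel-style pullbacks are what propagate resolution data across the original sequence.

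For the base case $n=0$, part (iv) is immediate since $j(X) \to j(Y) \to Z$ is itself a length $1$ resolution of $Z$; part (i) applies extension-closure of $\cE$ in $\stab_{\geq 0}(\cE)$ (\cref{thm:gabriel-quillen}) to place $W \in \cE$, and then weak idempotent completeness forces $X \in \cE$ as the fibre of the projection $W \to j(z_0)$; part (ii) combines the pullback with a $3 \times 3$ diagram built from the length $1$ resolution of $X$ to obtain an extension $j(x_0) \to V \to j(z_0)$ with $V \in \cE$, and then identifies $\fib(V \to Y)$ as an extension of $j(x_1)$ by $j(z_1)$ in $\cE$, so that $\fib(V \to Y) \to V \to Y$ is the desired length $1$ resolution. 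Part (iii) uses instead the pullback $W = j(y_0) \times_Z j(z_0)$: extension-closure places $W \in \cE$, weak idempotent completeness places $F := \fib(W \to j(z_0)) \in \cE$, and $F$ fits into a cofibre sequence $j(y_1) \to F \to X$, which is the needed length $1$ resolution of $X$.

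The inductive step follows the same blueprint, replacing the base-case closure arguments by invocations of the inductive hypothesis. For part (ii) at level $n$, the same pullback-plus-$3 \times 3$ construction applies, and (ii) at level $n-1$ reduces $\fib(V \to Y)$ from being an extension of two objects of length $n$ to having length $n$ itself. Part (iv) at level $n$ uses the pullback $W' = X \times_Y j(y_0)$ and (ii) at level $n-1$ applied to $Y_1 \to W' \to X$ gives $W'$ of length $n$, so that $W' \to j(y_0) \to Z$ is a length $n+1$ resolution. Part (iii) at level $n$ runs through the same construction as in the base case: (ii) at level $n-1$ yields $W$ of length $n$, (iii) at level $n-1$ reduces $F$ to length $n$, and (iv) at the current level combines $Y_1 \to F \to X$ into a length $n+1$ resolution. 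Finally, part (i) at level $n$ uses the pullback $W = Y \times_Z j(z_0)$, invoking (ii) at level $n-1$ to give $W$ of length $n$, and then (iii) at level $n-1$ applied to $X \to W \to j(z_0)$ concludes that $X$ has length $n$.

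The main obstacle is the diagram-chasing bookkeeping: at each step one must verify that the pullback object and the various fibres all lie in $\stab_{\geq 0}(\cE)$, so that the ``cofibre sequences in $\stab_{\geq 0}(\cE)$'' in the statement are correctly interpreted, and that the inductive hypotheses are invoked at compatible levels. A secondary subtlety is that some steps require a projection in the exact structure of $\cE$ itself, not merely in the heart structure of $\stab(\cE)$; here one feeds the situation through left speciality of $\cE$ in $\stab_{\geq 0}(\cE)$ (\cref{lem:j-left-special}) together with Quillen's obscure axiom (\cref{lem:obscure-axiom}), which are precisely the steps where weak idempotent completeness enters essentially.
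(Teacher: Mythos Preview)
Your overall architecture matches the paper's: a simultaneous induction on $n$ using Schanuel-type pullbacks along the last step of a resolution, with \cref{thm:gabriel-quillen} supplying the base cases.  Two remarks, one cosmetic and one substantive.

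First, you have interchanged the labels (iii) and (iv) relative to the displayed roman numerals (the paper's internal \verb|\label| names are out of order, which may have caused this).  The statement ``$j(X)\to j(Y)\to Z$ is itself a length~$1$ resolution'' proves the displayed item~(iii), and your pullback $W=j(y_0)\times_Z j(z_0)$ argument proves the displayed item~(iv); the paper in fact handles~(iv) without this double pullback, using directly that the fibre of $j(y_0)\to Z$ has length~$n$ by part~(i) and then feeding $Y_1\to\fib(j(y_0)\to Z)\to X$ into part~(iii).

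Second, the sketch for (ii) has a genuine gap.  You claim to produce an extension $j(x_0)\to V\to j(z_0)$ together with a map $V\to Y$, but there is no mechanism for this: given only $X\to W\to j(z_0)$ and the projection $j(x_0)\to X$, no pullback or pushout manufactures such a $V$ (the obstruction is that one would need the boundary map $j(z_0)\to\Sigma X$ to lift along $\Sigma j(x_0)\to\Sigma X$, which it need not).  The paper's fix is exactly where left speciality enters, and it is not a ``secondary subtlety'' but the heart of the argument for (ii): \cref{lem:j-left-special} supplies $\overline y\in\cE$ and a map $j(\overline y)\to W$ whose composite $j(\overline y)\to j(z_0)$ is a projection \emph{in $\cE$}; one then takes $V=j(x_0\oplus\overline y)$ and checks via \cref{lem:five-lemma} that $V\to Y$ is a projection in $\stab_{\geq 0}(\cE)$.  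The fibre of $V\to Y$ is then an extension of $X_1$ by $\fib(j(\overline y)\to Z)$, the latter itself an extension of $Z_1$ by a representable---so three pieces, not two.  Once you make this correction, your inductive treatment of (ii) goes through as written.
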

\begin{proof}
    The proof assertions~\eqref{lem:resolutions-cofibre-sequences-1}, \eqref{lem:resolutions-cofibre-sequences-2} and \eqref{lem:resolutions-cofibre-sequences-3} can be copied almost verbatim from \cite[Lemma on p 102]{quillen:higher-k}.

    We prove all statements simultaneously by induction on $n$.
    For \eqref{lem:resolutions-cofibre-sequences-1}, choose a cofibre sequence $Z' \to j(z) \to Z$ such that $Z'$ admits a resolution of length $n$ and let $\overline{Y} := Y \times_Z j(z)$.
    Then $\overline{Y}$ is an extension of objects admitting resolutions of length $n$.
    If $n=0$, the object $\overline{Y}$ also admits a resolution of length $0$ by \cref{thm:gabriel-quillen}; for $n > 0$ it follows by induction from \eqref{lem:resolutions-cofibre-sequences-2} that $\overline{Y}$ admits a length $n$ resolution because there exists a cofibre sequence $Z' \to \overline{Y} \to Y$.
    From the cofibre sequence $X \to \overline{Y} \to j(z)$, we conclude that $X$ also admits a length $n$ resolution: if $n=0$, this holds by \cref{thm:gabriel-quillen} because $\cE$ is weakly idempotent complete, otherwise this follows from the inductive assumption.

    For \eqref{lem:resolutions-cofibre-sequences-2}, choose a cofibre sequence $Z'\to j(z) \to Z$ as above and define $\overline{Y}$ as before.
    Since $\cE$ is left special in $\stab_{\geq 0}(\cE)$ by \cref{lem:j-left-special}, there exists a projection $p \colon \overline{y} \twoheadrightarrow z$ such that $j(p)$ factors over $\overline{Y} \to j(z)$.
    The fibre of the composite projection $j(\overline{y}) \to Z$ is an extension of $Z'$ and a representable object, so it admits a length $n$ resolution by the inductive assumption.
    Now choose a cofibre sequence $X'\to j(x) \to X$ such that $X'$ admits a length $n$ resolution.
    Then the transformation of cofibre sequences
    \[\begin{tikzcd}
        j(x)\ar[r]\ar[d] & j(x \oplus \overline{y})\ar[r]\ar[d] & j(\overline{y})\ar[d] \\
        X\ar[r] & Y\ar[r] & Z
    \end{tikzcd}\]
    is a pointwise projection by \cref{lem:five-lemma}.
    The fibre of $j(x \oplus \overline{y}) \to Y$ is an extension of objects admitting length $n$ resolutions.
    The inductive assumption implies that $Y$ admits a length $n+1$ resolution.

    For \eqref{lem:resolutions-cofibre-sequences-4}, the claim is obviously true for $n=0$.
    Otherwise, choose a cofibre sequence $Y'\to j(y) \to Y$ such that $Y'$ admits a length $n$ resolution.
    Then the fibre of the composite projection $j(y) \to Z$ is an extension of $X$ by $Y'$, so \eqref{lem:resolutions-cofibre-sequences-2} implies that $Z$ admits a length $n+1$ resolution.

    For \eqref{lem:resolutions-cofibre-sequences-3}, choose again a cofibre sequence $Y'\to j(y) \to Y$ such that $Y'$ admits a length $n$ resolution.
    Then \eqref{lem:resolutions-cofibre-sequences-1} implies that the fibre of the composite projection $j(y) \to Z$ admits a resolution of length $n$.
    It follows from \eqref{lem:resolutions-cofibre-sequences-4} and the cofibre sequence $Y' \to \fib(j(y) \to Z)) \to X$ that $X$ admits a resolution of length $n+1$.
\end{proof}

Recall once more from \cref{prop:weight-structure} that $\stabadd(\cE)$ carries a bounded weight structure with $\stabadd_{\geq 0}(\cE) = \pshadd^\f(\cE)$, and that we denote the essential image of $\stabadd_{\geq 0}(\cE)$ in $\stab(\cE)$ by $\stab_{\geq 0}(\cE)$.

\begin{prop}\label[prop]{prop:stab-bounded-heart-structure}
 Let $\cE$ be an exact category.
 Then $(\stab_{\geq 0}(\cE),\stab_{\leq 0}(\cE))$ is a bounded heart structure on $\stab(\cE)$, where $\stab_{\leq 0}(\cE)$ denotes the essential image of $\stabadd_{\leq 0}(\cE)$ under the Verdier projection $\stabadd(\cE) \to \stab(\cE)$.
 
 The induced exact functor $\cE \to \stab(\cE)^\heartsuit$ exhibits the heart as a weak idempotent completion of $\cE$.
\end{prop}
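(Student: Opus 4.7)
The plan is to prove the proposition's two assertions in sequence, the heart structure first, then the identification of the heart.

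\textbf{Heart structure axioms.} I would first verify the closure properties: \cref{lem:stab-positive-closure-properties} gives closure of $\stab_{\geq 0}(\cE)$ under extensions and finite colimits, and applying this to $\cE^\op$ and transporting along the equivalence $\stab(\cE^\op) \simeq \stab(\cE)^\op$ of \cref{cor:stab-op} yields the dual closure of $\stab_{\leq 0}(\cE)$ under extensions and finite limits. For the existence of heart decompositions, I would lift any $X \in \stab(\cE)$ to some $\tilde X \in \stabadd(\cE)$ via essential surjectivity of the Verdier projection, apply the weight decomposition from \cref{prop:weight-structure} to $\tilde X$, and project the resulting cofibre sequence to $\stab(\cE)$. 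Boundedness is inherited immediately from the boundedness of the weight structure on $\stabadd(\cE)$.

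\textbf{The heart contains $\cE^\flat$.} Since the Yoneda embedding lands in the weight heart $\stabadd(\cE)^\heartsuit \simeq \cE^\flat$ by \cref{prop:weight-structure}, passing through the Verdier projection yields $\cE \hookrightarrow \stab(\cE)^\heartsuit$. Because the heart of any heart structure is weakly idempotent complete, this extends to a fully faithful exact inclusion $\cE^\flat \hookrightarrow \stab(\cE)^\heartsuit$.

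\textbf{Essential surjectivity.} I would first invoke \cref{lem:stab-weakly-idempotent-complete} to reduce to the case where $\cE$ is weakly idempotent complete, so the goal becomes $\stab(\cE)^\heartsuit = \cE$. Given $X \in \stab(\cE)^\heartsuit$, \cref{lem:stab-resolutions} provides a length $n$ resolution; I would proceed by induction on $n$. In the inductive step, with a cofibre sequence $X' \to j(x) \to X$ and $X'$ of length $n-1$, the object $X'$ lies in the heart because $\stab_{\leq 0}(\cE)$ is closed under finite limits and both $j(x)$ and $X$ belong to it, so the inductive hypothesis gives $X' \simeq j(x')$ for some $x' \in \cE$. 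The main obstacle is the remaining step: showing that the cofibre sequence $j(x') \to j(x) \to X$ actually arises from an exact sequence in $\cE$, so that $X \in \cE$. A direct attempt via \cref{lem:j-left-special} is circular, as it already requires $X \in \cE$, so instead I plan to invoke \cref{cor:counit-equiv} applied to the bounded heart structure on $\stab(\cE)$ established in the first part. This yields an equivalence $\stab(\stab(\cE)^\heartsuit) \simeq \stab(\cE)$; combined with the universal property of $\stab$ applied to the inclusion $\cE \hookrightarrow \stab(\cE)^\heartsuit$, the composite $\stab(\cE) \to \stab(\stab(\cE)^\heartsuit) \simeq \stab(\cE)$ is the identity, forcing $\stab(\cE) \to \stab(\stab(\cE)^\heartsuit)$ to be an equivalence. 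Tracing this equivalence through the Gabriel--Quillen characterisation from \cref{thm:gabriel-quillen} of the two weakly idempotent complete extension-closed subcategories $\cE^\flat$ and $\stab(\cE)^\heartsuit$ of the common stable envelope should force $\cE^\flat = \stab(\cE)^\heartsuit$, completing the induction.
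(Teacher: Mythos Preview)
Your argument for the heart structure axioms is essentially the paper's, with one gloss: when you transport along $\stab(\cE^\op) \simeq \stab(\cE)^\op$, you silently assume that $\stab_{\geq 0}(\cE^\op)$ corresponds to $\stab_{\leq 0}(\cE)^\op$. Since $\stab_{\leq 0}(\cE)$ is defined as the image of $\stabadd_{\leq 0}(\cE)$, this identification lives one level up and requires that $\stabadd(\cE^\op) \simeq \stabadd(\cE)^\op$ is an equivalence of \emph{weighted} categories---which is what the paper extracts from Sosnilo's theorem. This is a minor omission.

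The essential surjectivity argument, however, has a real gap. You resolve $X$ in $\stab_{\geq 0}(\cE)$, so after the induction you end up with a cofibre sequence $j(x') \to j(x) \to X$ and must show $X \in \cE$. But the only tool available---\cref{thm:gabriel-quillen}---gives closure of $\cE$ under \emph{fibres} of projections in $\stab_{\geq 0}(\cE)$, not cofibres. Your proposed workaround via \cref{cor:counit-equiv} does establish that $\cE^\flat \hookrightarrow \stab(\cE)^\heartsuit$ induces an equivalence on stable envelopes, but the final sentence (``should force $\cE^\flat = \stab(\cE)^\heartsuit$'') is not a proof: knowing that two weakly idempotent complete extension-closed subcategories of the same stable category have the same stable envelope, and are each closed under fibres of projections in the same $\stab_{\geq 0}$, does not by itself make them equal. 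You would need a uniqueness statement that \cref{thm:gabriel-quillen} does not provide.

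The paper's fix is simple and stays within your inductive framework: resolve in $\stab_{\leq 0}(\cE)^\op$ instead (i.e.\ apply \cref{lem:stab-resolutions} to $\cE^\op$, using the identification of $\stab_{\leq 0}(\cE)$ with $\stab_{\geq 0}(\cE^\op)^\op$ established in the first part). This produces a cofibre sequence $X \to j(x) \to X'$ in $\stab(\cE)$; the same induction gives $X' \in \cE$, and now $X$ is the \emph{fibre} of a map $j(x) \to j(x')$ with $X \in \stab_{\geq 0}(\cE)$, so closure under fibres of projections from \cref{thm:gabriel-quillen} finishes immediately.
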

\begin{proof}
 The full subcategory $\stab_{\geq 0}(\cE)$ is closed under extensions and finite colimits by \cref{lem:stab-positive-closure-properties}, and every object $X \in \stab(\cE)$ fits into a cofibre sequence $X_{\leq 0} \to X \to \Sigma X_{\geq 0}$ with $X_{\geq 0} \in \stab_{\geq 0}(\cE)$ and $X_{\leq 0} \in \stab_{\leq 0}(\cE)$ since such a decompostion already exists in $\stabadd(\cE)$.
 We have to show that $\stab_{\leq 0}(\cE)$ is closed under extensions and finite limits.
 
 Consider an arbitrary exact category $\cD$.
 By \cref{cor:stab-op}, the canonical functor $\stab(\cD^\op) \to \stab(\cD)^\op$ is an equivalence.
 This applies in particular to the split-exact structure on $\cD$, so we also have an equivalence $\stabadd(\cD^\op) \xrightarrow{\sim} \stabadd(\cD)^\op$.
 By Sosnilo's theorem \cite[Corollary~3.4]{sosnilo:heart}, this is in fact an equivalence of weighted categories, and therefore restricts to an equivalence $\stabadd_{\leq 0}(\cD^\op) \simeq \stabadd_{\geq 0}(\cD)^\op$.
 From the commutativity of the diagram
 \[\begin{tikzcd}
	\stabadd(\cD^\op)\ar[r, "\simeq"]\ar[d] & \stabadd(\cD)^\op\ar[d] \\
	\stab(\cD^\op)\ar[r, "\simeq"] & \stab(\cD)^\op
 \end{tikzcd}\]
 we conclude that the lower horizontal arrow restricts to an equivalence
 \[ \stab_{\leq 0}(\cD^\op) \simeq \stab_{\geq 0}(\cD)^\op. \]
 Choosing $\cD = \cE^\op$, we conclude that $\stab_{\leq 0}(\cE)$ is closed under extensions and finite limits in $\stab(\cE)$.

 Since $(\stabadd_{\geq 0}(\cE),\stabadd_{\leq 0}(\cE))$ is a bounded weight structure by \cref{prop:weight-structure} and $\stabadd(\cE) \to \stab(\cE)$ is essentially surjective,
 the heart structure $(\stab_{\geq 0}(\cE),\stab_{\leq 0}(\cE))$ is bounded.
 
 By definition, the functor $j \colon \cE \to \stab(\cE)$ factors over the heart of this heart structure.
 Since $\stab(\cE)^\heartsuit$ is weakly idempotent complete, we obtain an induced fully faithful functor $\cE^\flat \to \stab(\cE)^\heartsuit$.
 Since $\stab_{\geq 0}(\cE)$ is generated by $\cE$ under finite colimits and $\stab_{\leq 0}(\cE)$ is generated by $\cE$ under finite limits (and similarly for $\cE^\flat)$, it follows that the inclusion $\cE \subseteq \cE^\flat$ induces an equivalence of bounded heart categories $(\stab(\cE),\stab_{\geq 0}(\cE),\stab_{\leq 0}(\cE)) \xrightarrow{\sim} (\stab(\cE^\flat),\stab_{\geq 0}(\cE^\flat),\stab_{\leq 0}(\cE^\flat))$.
 In particular, it suffices to show that $\cE \to \stab(\cE)^\heartsuit$
 is an equivalence whenever $\cE$ is weakly idempotent complete.

 So assume that $\cE$ is weakly idempotent complete and let $X$ be an object in $\stab(\cE)^\heartsuit$.
 Then \cref{lem:stab-resolutions} together with the preceding discussion implies that $X$ admits a length $n$ resolution in $\stab_{\leq 0}(\cE)^\op$ for some $n \geq 1$.
 If $n = 0$, then $X$ lies in the essential image of $j$ by definition.
 If $n \geq 1$, we find an exact sequence $X \to j(x) \to X'$ in $\stab_{\leq 0}(\cE)$ such that $X'$ admits a length $n-1$ resolution.
 By induction, $X'$ lies in the essential image of $j$.
 Since $X$ also lies in $\stab_{\geq 0}(\cE)$ and $\cE$ is closed under fibres of projections in $\stab_{\geq 0}(\cE)$ by \cref{thm:gabriel-quillen}, it follows that $X$ lies in the essential image of $j$.
\end{proof}

\begin{cor}\label[cor]{cor:stab-heart-universal-prop}
    Let $\cE$ be an exact category and let $(\cC,\cC_{\geq 0},\cC_{\leq 0})$ be a stable category with a heart structure.
    Then the restriction functor
    \[ j^* \colon \Fun^\exct((\stab(\cE),\stab_{\geq 0}(\cE),\stab_{\leq 0}(\cE),(\cC,\cC_{\geq 0},\cC_{\leq 0})) \to \Fun^\exct(\cE,\cC^\heartsuit) \]
is an equivalence.
\end{cor}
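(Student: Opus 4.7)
The plan is to combine Klemenc's universal property (\cref{thm:klemenc}) with the explicit description of the heart structure from \cref{prop:stab-bounded-heart-structure}. I would set up the commutative square
\[\begin{tikzcd}
    \Fun^\exct_\heartsuit(\stab(\cE),\cC)\ar[r, "j^*"]\ar[d, hook] & \Fun^\exct(\cE,\cC^\heartsuit)\ar[d, hook] \\
    \Fun^\exct(\stab(\cE),\cC)\ar[r, "\sim"] & \Fun^\exct(\cE,\cC)
\end{tikzcd}\]
where $\Fun^\exct_\heartsuit$ denotes heart-exact functors, the vertical maps are the obvious fully faithful inclusions, and the bottom arrow is the equivalence supplied by \cref{thm:klemenc}. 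It suffices to show that this square is a pullback, i.e.\ that an exact functor $F\colon\stab(\cE)\to\cC$ is heart-exact if and only if $F\circ j$ factors through $\cC^\heartsuit$.

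The ``only if'' direction is immediate: if $F$ is heart-exact, then $F(\stab(\cE)^\heartsuit)\subseteq\cC^\heartsuit$, and $j$ factors through $\stab(\cE)^\heartsuit$ by \cref{prop:stab-bounded-heart-structure}. For the converse, suppose $F$ is exact and $F(j(\cE))\subseteq\cC^\heartsuit$. I would show separately that $F(\stab_{\geq 0}(\cE))\subseteq\cC_{\geq 0}$ and $F(\stab_{\leq 0}(\cE))\subseteq\cC_{\leq 0}$. For the positive part, recall that $\pshadd^\f(\cE)$ is by definition the smallest full subcategory of $\pshadd(\cE)$ containing the Yoneda image of $\cE$ and closed under finite colimits. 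Since the Verdier projection $\stabadd(\cE)\to\stab(\cE)$ preserves finite colimits and $\stab_{\geq 0}(\cE)$ is closed under finite colimits in $\stab(\cE)$ by \cref{lem:stab-positive-closure-properties}, it follows that $\stab_{\geq 0}(\cE)$ is the smallest full subcategory of $\stab(\cE)$ containing $j(\cE)$ and closed under finite colimits. As $F$ preserves finite colimits and $\cC_{\geq 0}$ is closed under finite colimits, the inclusion $F(\stab_{\geq 0}(\cE))\subseteq\cC_{\geq 0}$ follows.

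For the negative part, I would invoke the duality $\stab(\cE^\op)\simeq\stab(\cE)^\op$ from \cref{cor:stab-op}, which by the proof of \cref{prop:stab-bounded-heart-structure} restricts to an identification $\stab_{\geq 0}(\cE^\op)^\op\simeq\stab_{\leq 0}(\cE)$. Dualising the argument of the previous paragraph then identifies $\stab_{\leq 0}(\cE)$ as the smallest full subcategory of $\stab(\cE)$ containing $j(\cE)$ and closed under finite limits. Since $F$ preserves finite limits and $\cC_{\leq 0}$ is closed under finite limits (part of the definition of a heart structure), we conclude $F(\stab_{\leq 0}(\cE))\subseteq\cC_{\leq 0}$, so $F$ is heart-exact.

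I do not expect a serious obstacle here: all of the nontrivial content has been absorbed into \cref{thm:klemenc}, \cref{lem:stab-positive-closure-properties}, \cref{cor:stab-op}, and the construction of the heart structure in \cref{prop:stab-bounded-heart-structure}. The only step that might demand a moment of care is verifying the generation statement for $\stab_{\geq 0}(\cE)$ under the Verdier projection, but this reduces cleanly to colimit-preservation of localisations together with the closure property already recorded in \cref{lem:stab-positive-closure-properties}.
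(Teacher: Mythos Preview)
Your proposal is correct and follows essentially the same approach as the paper: both set up the same commutative square with the bottom equivalence coming from \cref{thm:klemenc}, and both verify it is a pullback by using that $\stab_{\geq 0}(\cE)$ is generated by $j(\cE)$ under finite colimits (and dually for the negative part). Your write-up is slightly more explicit about why the generation statement holds, but the argument is the same.
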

\begin{proof}
   The triple $(\stab(\cE),\stab_{\geq 0}(\cE),\stab_{\leq 0}(\cE))$ is a bounded heart structure by \cref{prop:stab-bounded-heart-structure}.
    Consider the commutative square
   \[\begin{tikzcd}
      \Fun^\exct((\stab(\cE),\stab_{\geq 0}(\cE),\stab_{\leq 0}(\cE)),(\cC,\cC_{\geq 0},\cC_{\leq 0}))\ar[d]\ar[r, "j^*"] & \Fun^\exct(\cE,\cC^\heartsuit)\ar[d] \\
      \Fun^\exct(\stab(\cE),\cC)\ar[r, "j^*"] & \Fun^\exct(\cE,\cC)
   \end{tikzcd}\]
   The universal property of $\stab$ implies that the lower horizontal arrow is an equivalence, and both vertical arrows are fully faithful.

  Suppose that an exact functor $f \colon \stab(\cE) \to \cC$ restricts to an exact functor $\cE \to \cC^\heartsuit$.
   Since $\stab_{\geq 0}(\cE)$ is generated by $\cE$ under finite colimits, it follows that $f(\stab_{\geq 0}(\cE)) \subseteq \cC_{\geq 0}$.
   Dually, $f(\stab_{\leq 0}(\cE)) \subseteq \cC_{\leq 0}$ because $\stab_{\leq 0}(\cE)$ is generated by $\cE$ under finite limits.

   Conversely, a heart-exact functor $f \colon \stab(\cE) \to \cC$ evidently restricts to a functor $\stab(\cE)^\heartsuit \to \cC$, and therefore also to an exact functor $\cE \to \cC^\heartsuit$.

   This means precisely that the above square is a pullback, which proves the corollary. 
\end{proof}

The main theorems follow by combining the preceding results.

\begin{proof}[Proof of \cref{thm:main}]
    \cref{cor:stab-heart-universal-prop} shows that the functor $(-)^\heartsuit \colon \catheart \to \exact$ admits a left adjoint.
    Let $(\cC,\cC_{\geq 0},\cC_{\leq 0})$ be a bounded heart category.
    The induced functor $\stab(\cC^\heartsuit) \to \stab(\cC)$ is an equivalence by \cref{cor:counit-equiv} and restricts to an equivalence on hearts by \cref{prop:stab-bounded-heart-structure} because hearts of heart structures are weakly idempotent complete.
    Since the positive and negative part of a bounded heart structure are generated by the heart under finite colimits and limits, respectively, it follows that the counit of this adjunction is an equivalence. The characterisation of the essential image of $(-)^\heartsuit$ also follows from \cref{prop:stab-bounded-heart-structure}.

    To see that this adjunction restricts to Sosnilo's adjunction between additive categories and categories with a bounded weight structure, observe that the category $\Ac(\cA)$ vanishes for an additive category $\cA$ with the split-exact structure.
    Hence the heart structure from \cref{prop:stab-bounded-heart-structure} is the canonical weight structure on $\stabadd(\cA)$.
\end{proof}    

\begin{proof}[Proof of \cref{thm:gillet-waldhausen}]
 The inclusion $\cE \to \cE^\flat$ induces an equivalence in K-theory by the cofinality theorem \cite[Theorem~10.19]{barwick:ktheory}. 
 Applying \cref{lem:stab-weakly-idempotent-complete}, we may assume without loss of generality that $\cE$ is weakly idempotent complete.
 
 Since $\stab_{\geq 0}(\cE) \to \stab(\cE)$ is a Spanier--Whitehead stabilisation:
 the induced functor $\SW(\stab_{\geq 0}(\cE)) \to \stab(\cE)$ is fully faithful because $\stab_{\geq 0}(\cE)$ is prestable, and it is essentially surjective because the heart structure on $\stab(\cE)$ is bounded below.
 Therefore, it suffices to show that the inclusion functor $\cE \to \stab_{\geq 0}(\cE)$ induces an equivalence in K-theory.
 Note that $\cE$ is closed under extensions and fibres of projections in $\stab_{\geq 0}(\cE)$ by \cref{thm:gabriel-quillen}.
 
 For $n \geq 0$, denote by $\cE_n$ the full subcategory of objects in $\stab(\cE)$ which admit a length $n$ resolution.
 By \cref{lem:resolutions-cofibre-sequences}, the full subcategory
 $\cE_n$ is closed under extensions and fibres of projections in $\cE_{n+1}$.
The resolution theorem \cite[Theorem~1.4]{saunier:heart} therefore implies that $\K(\cE_n) \to \K(\cE_{n+1})$ is an equivalence for all $n$.
Since $\bigcup_n \cE_n = \stab_{\geq 0}(\cE)$ by \cref{lem:stab-resolutions} and K-theory commutes with filtered colimits, it follows that $\K(\cE) \to \K(\stab_{\geq 0}(\cE))$ is an equivalence.
\end{proof}


\section{The universal property of K-theory of exact categories}

The goal of this section is to prove a universal property of the connective K-theory of exact categories by leveraging the previous parts as well as work of \cite{bgt:univcharkt}. We begin by introducing our version of additivity in the context of exact categories.

Recall that $S_2(\cE)$, the degree $2$ part of the Waldhausen $S_\bullet$-construction, is an exact category whenever $\cE$ is an exact category.
Thinking of $S_2(\cE)$ as the category of exact sequences $x \rightarrowtail y \twoheadrightarrow z$ in $\cE$, inclusions and projections are defined pointwise (see \cref{lem:five-lemma}).
Given two extension-closed subcategories $\cA$ and $\cB$ of $\cE$, denote by $E(\cA,\cE,\cB) \subseteq S_2(\cE)$ the full subcategory on the exact sequences $a \rightarrowtail x \twoheadrightarrow b$ satisfying $a \in \cA$ and $b \in \cB$.

\begin{defn}\label[defn]{defn:sod}
    Let $\cE$ be an exact category.
    An ordered pair $(\cA,\cB)$ of extension-closed subcategories of $\cE$ is a \textit{semi-orthogonal decomposition} of $\cE$ if the functor
        \[ e \colon E(\cA,\cE,\cB) \to \cE,\ (a \rightarrowtail x \twoheadrightarrow b) \mapsto x \]
        is an equivalence of exact categories.
\end{defn}

Let us record a number of equivalent characterisations of semi-orthogonal decompositions.

\begin{prop}\label[lem]{lem:sod}
    Let $\cE$ be an exact category, and let $(\cA,\cB)$ be an ordered pair of extension-closed subcategories of $\cE$.
    Then the following are equivalent:
    \begin{enumerate}
        \item\label{lem:sod-i} the pair $(\cA,\cB)$ is a semi-orthogonal decomposition;
        \item\label{lem:sod-ii} it holds that:
      \begin{enumerate}
        \item\label{lem:sod-iia} the inclusion functor $i \colon \cA \to \cE$ admits a right adjoint $p$ and the inclusion functor $j \colon \cB \to \cE$ admits a left adjoint $q$;
        \item\label{lem:sod-iib} the functors $p$ and $q$ are exact;
        \item\label{lem:sod-iic} the sequences $\cA \xrightarrow{i} \cE \xrightarrow{q} \cB$ and $\cB \xrightarrow{j} \cE \xrightarrow{p} \cA$ are bifibre sequences in $\exact$;
        \item\label{lem:sod-iid} the counit $ip(x) \to x$ is an inclusion and the unit $x \to jq(x)$ is a projection for all $x \in \cE$.
      \end{enumerate}
        \item\label{lem:sod-iii} the inclusion functor $i \colon \cA \to \cE$ admits an exact right adjoint $p$ such that the counit $ip(x) \to x$ is an inclusion for all $x \in \cE$, and $\cB = \ker(p)$;
        \item\label{lem:sod-iv} the inclusion functor $j \colon \cB \to \cE$ admits an exact left adjoint $q$ such that the unit $x \to jq(x)$ is a projection for all $x \in \cE$, and $\cA = \ker(q)$;
        \item\label{lem:sod-v} it holds that:
         \begin{enumerate}
             \item\label{lem:sod-va} for every $x\in\cE$, there exists an exact sequence $a\rightarrowtail x\twoheadrightarrow b$ with $a\in\cA$ and $b\in\cB$;
             \item\label{lem:sod-vb} for every $a\in\cA$ and $b\in\cB$, we have $\Hom_\cE(a, b)\simeq 0$;
             \end{enumerate}
             In particular, the exact sequence from \eqref{lem:sod-va} is functorial in $x$. It will be denoted by $p(x) \rightarrowtail x \twoheadrightarrow q(x)$ in the sequel.
             \begin{enumerate}
             \setcounter{enumii}{2}
             \item\label{lem:sod-vc} for every projection $y \twoheadrightarrow z$ in $\cE$, the induced map $p(y) \to p(z)$ is a projection in $\cA$;
         \end{enumerate}
        \item\label{lem:sod-vi} there exist exact functors $p \colon \cE \to \cA$ and $q \colon \cE \to \cB$ and an exact sequence of functors $p \rightarrowtail \id \twoheadrightarrow q$ in $\cE$ such that the inclusion map is the counit of an adjunction and the projection is the unit of an adjunction;
    \end{enumerate}
\end{prop}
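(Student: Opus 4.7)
The plan is to establish the equivalences by running the cycle (i) $\Rightarrow$ (v) $\Rightarrow$ (vi) $\Rightarrow$ (ii) $\Rightarrow$ (iii) $\Rightarrow$ (v), with (iv) handled by the symmetric argument (iv) $\Rightarrow$ (v). Conditions (v) and (vi) are the most flexible to work with because they expose the pointwise decomposition directly, so I would route everything through them.

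For (i) $\Rightarrow$ (v): the inverse of $e$ produces, for each $x \in \cE$, the required exact sequence $a(x) \rightarrowtail x \twoheadrightarrow b(x)$ in (v)(a). For (v)(b), a direct computation in $E(\cA, \cE, \cB)$ shows that the only morphism from $(a \xrightarrow{\id} a \to 0)$ to $(0 \to b \xrightarrow{\id} b)$ is forced to have middle component $0$, so fully faithfulness of $e$ gives $\Hom_\cE(a, b) \simeq 0$. Condition (v)(c) follows because the inverse of $e$ is itself exact (equivalences of exact categories are exact in both directions) and projections in $S_2(\cE)$ are pointwise by \cref{lem:five-lemma}. For (v) $\Rightarrow$ (vi): orthogonality (v)(b) makes the decomposition functorial --- a morphism $f \colon x \to x'$ composes trivially into $a(x) \to b(x')$, hence lifts uniquely along the inclusion $a(x') \to x'$ to give $p(f) \colon a(x) \to a(x')$. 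This yields functors $p, q$ together with an exact sequence of natural transformations $p \rightarrowtail \id \twoheadrightarrow q$; the adjunction structure is then immediate since any map $ia \to x$ composes trivially with $x \to q(x)$ by (v)(b) and so factors uniquely through $p(x) \to x$. Exactness of $p$ combines (v)(c) with the diagram obtained by stacking two decompositions and invokes \cref{lem:five-lemma} to see that $p$ also preserves inclusions; exactness of $q$ is dual.

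For (vi) $\Rightarrow$ (ii), conditions (a), (b), (d) are direct repackagings; for (c) the key point is $qi \simeq 0$ and $pj \simeq 0$, which hold because the counit $ipi(a) \to ia$ is an equivalence (since $i$ is fully faithful), forcing $q(ia) \simeq 0$. This shows $\cA = \ker(q)$ and $\cB = \ker(p)$, so the claimed bifibre sequences in $\exact$ follow. Clearly (ii) implies both (iii) and (iv). Finally, for (iii) $\Rightarrow$ (v): since $i$ is fully faithful, $pi \simeq \id$, so defining $q(x)$ as the cofibre of the counit inclusion $ip(x) \to x$ and applying the exact functor $p$ yields an exact sequence whose first map $p(x) \to p(x)$ is an equivalence; this forces $p(q(x)) \simeq 0$, so $q(x) \in \ker(p) = \cB$ and (v)(a) holds. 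Then (v)(b) is the adjunction computation $\Hom_\cE(ia, jb) \simeq \Hom_\cA(a, pjb) \simeq 0$, and (v)(c) is immediate from exactness of $p$. The symmetric argument covers (iv) $\Rightarrow$ (v), closing the cycle.

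The main subtlety will be verifying in (v) $\Rightarrow$ (vi) that the pointwise decomposition assembles coherently into a genuine functor at the $\infty$-categorical level. This is best handled by working inside the stable envelope $\stab(\cE)$, into which $\cE$ embeds fully faithfully by \cref{thm:gabriel-quillen}; there, the required lifts are governed by fibre sequences of mapping spectra, and the vanishing in (v)(b) promotes the pointwise factorisations to a choice that is unique up to contractible ambiguity.
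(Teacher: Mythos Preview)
Your cycle closes and most implications track the paper's own proof, which runs $(\text{i}) \Rightarrow (\text{ii}) \Rightarrow (\text{iii}) \Leftrightarrow (\text{iv})$, then $(\text{iii}) \Rightarrow (\text{v}) \Rightarrow (\text{vi}) \Rightarrow (\text{i})$. There is however one genuine gap: in your step $(\text{v}) \Rightarrow (\text{vi})$, the claim ``exactness of $q$ is dual'' does not go through, because condition~(v) is \emph{not} self-dual---(v)(c) is a hypothesis on $p$ only, and there is no assumed analogue for $q$. Exactness of $p$ is cheap (it is a right adjoint, hence preserves the relevant fibres, and (v)(c) supplies preservation of projections), but for $q$ you must argue directly that it preserves inclusions. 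The paper does this with two applications of \cref{lem:five-lemma}: given an inclusion $x \rightarrowtail y$ with cofibre $z$, first compare the exact rows $p(x) \rightarrowtail p(y) \twoheadrightarrow p(z)$ and $x \rightarrowtail y \twoheadrightarrow z$ (using that the counit $p(z) \rightarrowtail z$ is an inclusion) to see that $p(y) \cup_{p(x)} x \to y$ is an inclusion, and then compare the rows $p(x) \rightarrowtail x \twoheadrightarrow q(x)$ and $p(y) \rightarrowtail y \twoheadrightarrow q(y)$ to conclude that $q(x) \to q(y)$ is an inclusion.

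Two further remarks on differences from the paper. Your handling of $\infty$-categorical coherence in $(\text{v}) \Rightarrow (\text{vi})$ via the stable envelope is workable but roundabout; the paper instead notes that orthogonality (v)(b) makes $\Hom_\cE(a',a) \to \Hom_\cE(a',x)$ an equivalence for all $a' \in \cA$, so $a$ is a \emph{right adjoint object} to $x$ along $i$, and pointwise adjoints already assemble into a genuine adjoint functor---no detour through $\stab(\cE)$ is needed. And in $(\text{vi}) \Rightarrow (\text{ii})(\text{c})$, showing $\cA = \ker(q)$ and $\cB = \ker(p)$ only gives the \emph{fibre} part of the bifibre sequences; you still owe the cofibre part, namely that restriction along $q$ identifies $\Fun^\exct(\cB,\cD)$ with the exact functors $\cE \to \cD$ vanishing on $\cA$. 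The paper handles this by observing that any such functor inverts the unit projections $x \twoheadrightarrow jq(x)$ and hence factors through the localisation $q$.
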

\begin{proof}
    Suppose $(\cA,\cB)$ is a semi-orthogonal decomposition.
    To prove $\eqref{lem:sod-i} \Rightarrow \eqref{lem:sod-ii}$, it suffices to check that $E(\cA,\cE,\cB)$ always satisfies the properties of $\eqref{lem:sod-ii}$.

    The exact functor $i' \colon \cA \to \cE(\cA,\cE,\cB)$ given on objects
    by $a \mapsto (a \overset{\id}{\rightarrowtail} a \twoheadrightarrow 0)$, has an exact right adjoint $p'$ given by $(a \rightarrowtail x \twoheadrightarrow b) \mapsto a$, and the counit of this adjunction is given by the canonical transformation
    \[\begin{tikzcd}
        a\ar[r, tail, "\id"]\ar[d, tail] & a\ar[r, two heads]\ar[d, tail] & 0\ar[d, tail] \\
        a\ar[r, tail] & x\ar[r, two heads] & b
    \end{tikzcd}\]
    Moreover, $e \circ i' \simeq i$ and thus $p := p' \circ e^{-1}$ is the desired right adjoint to $i$.
    By considering opposite categories, we also have that $j' \simeq e^{-1} \circ j \colon \cB \to E(\cA,\cE,\cB)$ admits an exact left adjoint $q'$ whose unit is a projection.
    Hence, to prove $\eqref{lem:sod-i} \Rightarrow \eqref{lem:sod-ii}$, it suffices to check that $\cE(\cA,\cE,\cB)$ always satisfies the properties of $\eqref{lem:sod-ii}$.
    
    The above has already showed $\eqref{lem:sod-iia}$, $\eqref{lem:sod-iib}$ and $\eqref{lem:sod-iid}$. It also follows that $\cB$ is the localisation of $\cE$ at the unit morphisms and $\cA$ is the localisation of $\cE$ at the counit morphisms.
    Observing that $q'$ vanishes precisely on the objects in the essential image of $i'$, it follows that $q'$ is a localisation at the collection of projections with fibre in the essential image of $i'$, and dually for $p'$ and $j'$. In particular, the sequences of $\eqref{lem:sod-iic}$ are fibre sequences in $\exact$ since those are computed underlying.

    If $f \colon E(\cA,\cE,\cB) \to \cD$ is an exact functor vanishing on the essential image of $i'$, it necessarily inverts all projections whose fibre lies in $\cA$, so it induces an exact functor $\cB \to \cD$.
    This shows that restriction along $q'$ is a fully faithful functor $\Fun^\exct(\cB,\cD) \to \Fun^ \exct(E(\cA,\cE,\cB),\cD)$ with essential image those exact functors vanishing on the essential image of $i'$.
    In particular, $q'$ is the cofibre of $i'$ in $\exact$ so that $\cA \xrightarrow{i'} E(\cA,\cE,\cB) \xrightarrow{q'} \cB$ is a bifibre sequence.
    Passing to opposite categories, we also have that $\cB \xrightarrow{j'} E(\cA,\cE,\cB) \xrightarrow{p'} \cA$ is a bifibre sequence.
    This proves $\eqref{lem:sod-i} \Rightarrow \eqref{lem:sod-ii}$.

    Obviously, \eqref{lem:sod-ii} implies both \eqref{lem:sod-iii} and \eqref{lem:sod-iv}.
    
    Let us show $\eqref{lem:sod-iii} \Rightarrow \eqref{lem:sod-iv}$.
    Setting $q(x) := \cofib(ip(x) \rightarrowtail x)$ to be the cofibre of the unit morphism, it is straightforward to check that this yields a left adjoint to the inclusion $j \colon \cB \to \cE$.
    Since it preserves colimits, exactness of $q$ will follow from $q$ preserving inclusions.
    If $x \rightarrowtail y$ is an inclusion, consider the induced commutative diagram
    \[\begin{tikzcd}
            p(x)\ar[r, tail]\ar[d, tail] & p(y)\ar[r, two heads]\ar[d, tail] & p(z)\ar[d, tail] \\
            x\ar[r, tail]\ar[d, two heads] & y\ar[r, two heads]\ar[d, two heads] & z\ar[d, two heads] \\
            q(x)\ar[r] & q(y)\ar[r] & q(z)
    \end{tikzcd}\]
    whose upper row is exact by assumption.
    By \cref{lem:five-lemma}, it follows that $x \cup_{p(x)} p(y) \to y$ is an inclusion, and therefore $q(x) \to q(y)$ is also an inclusion.
    
    Moreover, an object $x \in \cE$ lies in $\cA$ if and only if $ip(x) \to x$ is an equivalence, which happens precisely if its cofibre is trivial.
    Since this cofibre computes $q(x)$, it follows that $\cA = \ker(q)$.

    By considering opposite categories, we also have $\eqref{lem:sod-iv} \Rightarrow \eqref{lem:sod-iii}$.

    Assuming \eqref{lem:sod-iii}, we have just seen that \eqref{lem:sod-va} holds,
    and \eqref{lem:sod-vc} follows from the exactness of $p$.
    Since $p$ is right adjoint to the inclusion of $\cA$, we have for $a \in \cA$ and $b \in \cB$ that
    \[ \Hom_\cE(a,b) \simeq \Hom_\cA(a,p(b)) \simeq 0 \]
    because $\cB = \ker(p)$.
    So \eqref{lem:sod-vb} holds as well.

    Assume \eqref{lem:sod-v}.
    If $a \rightarrowtail x \twoheadrightarrow b$ is an exact sequence with $a \in \cA$ and $b \in \cB$, then we obtain for all $a'\in \cA$ a fibre sequence
    \[ \Hom_\cE(a',a) \to \Hom_\cE(a',x) \to \Hom_\cE(a',b) \simeq 0, \]
    so $a \in \cA$ is a right adjoint object to $x$.
    Similarly, $b \in \cB$ is a left adjoint object of $x$, so we obtain adjoint functors $p \colon \cE \to \cA$ and $q \colon \cE \to \cB$. 
    Since $p$ preserves projections by assumption,
    it follows that $p$ is an exact functor.
    To see that $q$ is exact, it suffices to show that $q$ preserves inclusions.
    For any exact sequence $x \rightarrowtail y \twoheadrightarrow z$, we obtain a commutative diagram
    \[\begin{tikzcd}
        p(x)\ar[r, tail]\ar[d, tail] & x\ar[r, two heads]\ar[d, tail] & q(x)\ar[d] \\
        p(y)\ar[r, tail] & y\ar[r, two heads] & q(y)
    \end{tikzcd}\]
    The induced morphism $p(y) \cup_{p(x)} x \to y$ is an inclusion by \cref{lem:five-lemma} because $p(z) \rightarrowtail z$ is an inclusion.
    Applying \cref{lem:five-lemma} again, we conclude that $q$ preserves inclusions.
    This proves \eqref{lem:sod-vi}.

    Finally, if \eqref{lem:sod-vi} holds, the functor
    \[ f \colon \cE \to E(\cA,\cE,\cB),\ x \mapsto (p(x) \rightarrowtail x \twoheadrightarrow q(x))\]
    provides an exact section to $e$. 
    Since $q$ is left adjoint to $j$, there exists a natural transformation $\tau \colon f \circ e \to \id$.
    By assumption, we obtain for every object $a \rightarrowtail x \twoheadrightarrow b$ of $E(\cA,\cE,\cB)$ a commutative diagram with exact rows and columns
    \[\begin{tikzcd}
            p(a)\ar[r, tail]\ar[d, tail] & p(x)\ar[r, two heads]\ar[d, tail] & 0\ar[d, tail] \\
            a\ar[r, tail]\ar[d, two heads] & x\ar[r, two heads]\ar[d, two heads] & b\ar[d, two heads] \\
            0\ar[r, tail] & q(x)\ar[r, two heads] & q(b)
    \end{tikzcd}\]
    In particular, the map $q(x) \to b$ determining the component of $\tau$ at $x$ is an equivalence, and so $\tau$ is an equivalence, proving \eqref{lem:sod-i}.
\end{proof}

\begin{rem}
    By taking opposite categories, one observes that condition~\ref{lem:sod}.\eqref{lem:sod-vc} can be replaced by the assumption that $q$ preserves inclusions.
\end{rem}

\begin{rem}
     If $\cE$ is stable and $\cA$ and $\cB$ are full stable subcategories, all equipped with their maximal exact structures, note that conditions~\ref{lem:sod}.\eqref{lem:sod-iib} and \ref{lem:sod}.\eqref{lem:sod-iid} are vacuous.
     This recovers one of the usual characterisations of semi-orthogonal decompositions of stable categories.
     Semi-orthogonal decompositions of stable categories are also called semi-split Verdier sequences.

   A functor $F\colon\catst\to\cC$ with target a stable category sends semi-split Verdier sequences to (necessarily split) exact sequences if and only if it is an additive invariant
   (see \cite[Proposition 2.4]{hls:loc-thm-for-ktheory}).
\end{rem}

\begin{defn}
\ \begin{enumerate}
    \item A functor $F\colon\exact\to\Sp$ is said to be \textit{additive} if it sends semi-orthogonal decompositions of exact categories to split exact sequences of spectra.
    \item A functor $F\colon\exact\to\Sp$ is said to be \textit{invariant under passage to the stable envelope} if it sends the natural transformation $\id\to\stab$ to an equivalence.
\end{enumerate}
\end{defn}

Denote by $\iota\colon\catst\to\An$ the underlying $\infty$-groupoid functor. The following statement provides a universal property for the functor $\K\colon\catst\to\Sp$, first proved in \cite{bgt:univcharkt}.

\begin{theorem}[Blumberg--Gepner--Tabuada]\label[theorem]{thm:bgt}
    The natural transformation $\Sigma^\infty\iota\to\K$ of functors $\catst\to\Sp$ is initial amongst natural transformations $\Sigma^\infty\iota\to F$ with target an additive invariant $F$.
\end{theorem}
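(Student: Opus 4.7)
The plan is to identify $\K$ as a corepresentable functor in the $\infty$-category of additive noncommutative motives, combined with a Yoneda-style calculation for $\Sigma^\infty\iota$.

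First I would establish that $\Sp^\omega$ is the free small stable $\infty$-category on one generator, so that for any $\cC\in\catst$ evaluation at the sphere spectrum induces an equivalence $\Hom_{\catst}(\Sp^\omega,\cC)\simeq \iota\cC$. Combining this with the ordinary Yoneda lemma and the $\Sigma^\infty\dashv\Omega^\infty$ adjunction, for any functor $F\colon\catst\to\Sp$ we obtain a natural equivalence
\[ \Nat(\Sigma^\infty\iota, F)\simeq F(\Sp^\omega). \]
Applied to $F=\K$, the distinguished transformation $\Sigma^\infty\iota\to\K$ corresponds to the class $[\bbS]\in\K_0(\Sp^\omega)$.

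Next I would construct the universal additive invariant $U\colon\catst\to\cM_\add$. Concretely, take $\cM_\add$ to be the Bousfield localisation of (a large version of) $\Fun(\catst,\Sp)$ at the class of maps $\Sigma^\infty\iota\cA\oplus\Sigma^\infty\iota\cC\to\Sigma^\infty\iota\cB$ arising from semi-split Verdier sequences $\cA\to\cB\to\cC$. By construction, restriction along $U$ induces an equivalence between additive invariants $\catst\to\Sp$ and exact cocontinuous functors $\cM_\add\to\Sp$; in particular $\K\simeq \hat\K\circ U$ for a unique $\hat\K$ since $\K$ is additive by Waldhausen's additivity theorem.

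The heart of the argument, and the main obstacle, is to prove the corepresentability
\[ \K(\cC)\simeq \Hom_{\cM_\add}(U(\Sp^\omega), U(\cC)). \]
The class $[\bbS]\in\K_0(\Sp^\omega)$ supplies a natural comparison map via $\hat\K$; the task is to show it is an equivalence. Following Blumberg--Gepner--Tabuada, one analyses the right-hand side by presenting $U(\cC)$ through Waldhausen's $S_\bullet$-construction and matching the resulting simplicial spectrum with $\K(\cC)$ term by term. This step is where additivity is really used, since it is what allows one to reduce the computation of $\Hom_{\cM_\add}(U(\Sp^\omega),-)$ on a general $\cC$ to the contributions of the ``length one'' objects, reproducing the group completion appearing in Waldhausen's definition of $\K$.

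Assembling these two inputs yields the universal property. For any additive invariant $F\simeq \hat F\circ U$ we obtain
\[ \Nat(\K, F)\simeq \Nat(\hat\K, \hat F)\simeq \Nat(\Hom_{\cM_\add}(U(\Sp^\omega),-), \hat F)\simeq \hat F(U(\Sp^\omega))\simeq F(\Sp^\omega)\simeq \Nat(\Sigma^\infty\iota, F), \]
and tracing the chain of equivalences shows the composite identification is precisely precomposition with $\Sigma^\infty\iota\to\K$, which is the stated initiality.
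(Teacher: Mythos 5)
Your proposal is correct in outline, but it takes a genuinely different route from the paper: you essentially re-run the original Blumberg--Gepner--Tabuada argument (construct the additive motives $\cM_\add$, prove corepresentability of $\K$ by $U(\Sp^\omega)$, conclude by Yoneda), whereas the paper avoids the motivic machinery entirely and reduces to the space-level universal property of \cite[Theorem~5.1]{hls:loc-thm-for-ktheory} --- that $\iota\to\Omega^\infty\K$ is initial among group-like additive functors $\catst\to\An$ under the groupoid core --- and then upgrades to spectra by the short chain $\Nat(\Sigma^\infty\iota,F)\simeq\Nat(\Sigma^\infty\iota,\tau_{\geq 0}F)\simeq\Nat(\K,\tau_{\geq 0}F)\simeq\Nat(\K,F)$, using that connective covers preserve additive invariants and that $\K$ is connective. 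The point you must treat with real care, and the very reason the paper cites the HLS variant rather than \cite[Theorem~1.3]{bgt:univcharkt}: here $\catst$ consists of small stable categories that are \emph{not} assumed idempotent complete, and connective $\K$ differs on $\pi_0$ from its Morita-invariant version (cofinality). BGT's published corepresentability theorem lives on idempotent-complete categories, where $\Hom_{\cM_\add}(U(\Sp^\omega),U(\cC))$ computes the K-theory of the idempotent completion; building $\cM_\add$ from all of $\catst$, as you propose, is the correct fix, but then ``following Blumberg--Gepner--Tabuada'' at the corepresentability step is not a citation but a proof to be redone without Morita invariance --- and that verification is precisely the content of the result the paper quotes instead. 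Two smaller slips: the localisation should be performed on presheaves $\Fun(\catst^\op,\Sp)$ at maps of stabilised representables $j(\cA)\oplus j(\cC)\to j(\cB)$, since $\Sigma^\infty\iota\cA\oplus\Sigma^\infty\iota\cC\to\Sigma^\infty\iota\cB$ as written is a map of spectra, not of presheaves; and your Yoneda identification $\Nat(\Sigma^\infty\iota,F)\simeq F(\Sp^\omega)$ needs $F$ reduced (e.g.\ additive), because $\iota\simeq\Hom_{\catst}(\Sp^\omega,-)$ is pointed at the zero functor. In exchange for re-proving a substantial theorem, your route makes the corepresenting object and the class $[\bbS]\in\K_0(\Sp^\omega)$ explicit; the paper's route is a two-line deduction from the literature already tailored to the non-idempotent-complete setting.
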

\begin{proof}
    Instead of \cite[Theorem~1.3]{bgt:univcharkt}, we use the non-necessarily idempotent-complete version of the universal property provided by \cite[Theorem~5.1]{hls:loc-thm-for-ktheory}. It says that the transformation $\iota \to \Omega^\infty\K$ is initial among group-like additive functors $\catst \to \An$ under the groupoid core.
    Observing that group-like additive functors $\catst \to \An$ are the same as additive functors $\catst \to \Sp_{\geq 0}$, we have for every additive functor $F \colon \catst \to \Sp$ equivalences
    \[ \Nat(\Sigma^\infty\iota,F) \simeq \Nat(\Sigma^\infty\iota, \tau_{\geq 0}F) \simeq \Nat(\K,\tau_{\geq 0}F) \simeq \Nat(\K,F) \]
    because taking connective covers preserves additive functors and the connective K-theory functor takes values in connective spectra.
\end{proof}

The functor $\K\colon\exact\to\Sp$ has the following universal property.

\begin{theorem}\label[theorem]{thm:univ-prop}
    The natural transformation $\Sigma^\infty\iota\to\K$ of functors $\exact\to\Sp$ is initial amongst natural transformations $\Sigma^\infty\iota\to F$ with target a functor $F$ which is additive and invariant under passage to the stable envelope.
\end{theorem}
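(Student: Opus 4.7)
My plan is to reduce the statement to \cref{thm:bgt} on $\catst$ via \cref{thm:gillet-waldhausen} and the reflectivity of the inclusion $\inc \colon \catst \hookrightarrow \exact$. Stable-envelope invariance will let one identify natural transformations out of $\K$ on $\exact$ with those on $\catst$, after which the Blumberg--Gepner--Tabuada universal property takes over; the subtlety is that the domain $\Sigma^\infty\iota$ is itself \emph{not} stable-envelope invariant, so one has to argue that restriction is nevertheless an equivalence on $\Nat$-anima into any stable-envelope invariant target.

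First I would check that $\K \colon \exact \to \Sp$ belongs to the class of functors under consideration. Invariance under passage to the stable envelope is precisely \cref{thm:gillet-waldhausen}. For additivity, a semi-orthogonal decomposition $(\cA,\cB)$ of $\cE$ produces the split bifibre sequence $\cA \to \cE \to \cB$ in $\exact$ of~\ref{lem:sod}.\eqref{lem:sod-iic}, split by the adjoints in~\ref{lem:sod}.\eqref{lem:sod-iia}; combined with the equivalence $\cE \simeq E(\cA,\cE,\cB)$ from \cref{defn:sod} and the standard additivity theorem for connective K-theory of exact $\infty$-categories, this yields the desired split exact sequence $\K(\cA)\to\K(\cE)\to\K(\cB)$ of spectra.

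Next I would establish the following reflective-subcategory principle: for any functor $G \colon \exact \to \Sp$ and any stable-envelope invariant $F$, restriction along $\inc$ induces an equivalence
\[ \Nat_\exact(G,F) \xrightarrow{\simeq} \Nat_\catst(G|_\catst, F|_\catst). \]
Writing $\eta \colon \id \to \inc \circ \stab$ for the unit of the reflective localization, naturality of any $\alpha \colon G \to F$ at $\eta_\cE$ combined with the invertibility of $F(\eta_\cE)$ forces
\[ \alpha_\cE = F(\eta_\cE)^{-1} \circ \alpha_{\stab(\cE)} \circ G(\eta_\cE), \]
and this same formula extends any $\beta \colon G|_\catst \to F|_\catst$ back to a natural transformation on $\exact$, with naturality a direct consequence of naturality of $\beta$ and $\eta$. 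This is the standard behaviour of functors into a Bousfield-reflective subcategory.

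Finally, I would combine these pieces. Every semi-orthogonal decomposition in $\catst$ (equipped with the maximal exact structure) is a semi-orthogonal decomposition in $\exact$, so $F|_\catst$ is additive whenever $F$ is. Applying the restriction equivalence to both $G = \Sigma^\infty\iota$ and $G = \K$ and invoking \cref{thm:bgt} in the middle then gives
\[ \Nat_\exact(\K,F) \simeq \Nat_\catst(\K,F|_\catst) \xrightarrow{\simeq} \Nat_\catst(\Sigma^\infty\iota, F|_\catst) \simeq \Nat_\exact(\Sigma^\infty\iota, F), \]
and the compatibility of all equivalences with precomposition along $\Sigma^\infty\iota \to \K$ implies that this composite is itself precomposition along the natural transformation in question. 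I expect the main obstacle to be upgrading the reflective-subcategory argument from a bijection on $\pi_0$ to a genuine equivalence of mapping anima in the $\infty$-categorical sense; this should follow formally from the standard properties of Bousfield localizations, but the explicit higher coherences require some care.
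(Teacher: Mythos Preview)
Your proposal is correct and follows essentially the same route as the paper: both arguments use the adjunction $\stab \dashv \inc$ to identify $\Nat_\exact(G,F) \simeq \Nat_\catst(G\circ\inc, F\circ\inc)$ for stable-envelope invariant $F$, then invoke \cref{thm:bgt} on $\catst$ and adjoin back. Your ``reflective-subcategory principle'' is precisely what the paper expresses as $\Nat(\K, F\circ i\circ\stab)\simeq\Nat(\K\circ i, F\circ i)$; the $\infty$-categorical upgrade you flag as a potential obstacle is in fact automatic, since $i^* \dashv \stab^*$ is an $\infty$-adjunction on functor categories (right Kan extension along $i$ agrees with precomposition by its left adjoint $\stab$).
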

\begin{proof}
    Let $F\colon\exact\to\Sp$ be an additive functor which is invariant under passage to the stable envelope.
    Write $i\colon\catst\to\exact$ for the inclusion functor, which is right adjoint to $\stab$.
    Then we have a natural equivalence $F\xrightarrow{\simeq}F\circ i\circ\stab$ which induces
    \[
        \Nat(\K, F)\simeq \Nat(\K, F\circ i\circ\stab)\simeq\Nat(\K\circ i, F\circ i)
    \]
    Since $F$ is additive, so is $F\circ i$ (but now as a functor from $\catst$) so that by \cref{thm:bgt}, $\Sigma^\infty\iota\circ i\to\K\circ i$ induces an equivalence
    \[
        \Nat(\K, F)\simeq\Nat(\Sigma^\infty\iota\circ i, F\circ i).
    \]
    Adjoining back and using the invariance under passage to the stable envelope again, we can rewrite the right hand side as $\Nat(\Sigma^\infty\iota, F)$, which concludes the proof.
\end{proof}

\begin{rem}
    Up to passing from the 1-categorical to the $\infty$-categorical world, the above theorem implies that Theorem~2 and Theorem~3 of Quillen's original article on higher K-theory \cite{quillen:higher-k} capture the properties for which the K-theory of exact categories is universal.
\end{rem}

Observe in particular that this implies that $\K$ is right Kan extended from its values on $\catst$. In fact, we can show a more general result. First, note the following.

\begin{lem} \label[lem]{lem:sod-left-special}
    Let $(\cA, \cB)$ be a semi-orthogonal decomposition of $\cE$. Then the inclusion $\cA\subseteq\cE$ is left special and the inclusion $\cB\subseteq\cE$ is right special.
\end{lem}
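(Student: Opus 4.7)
The plan is to exploit the adjoint functors supplied by \cref{lem:sod}. Since $(\cA,\cB)$ is a semi-orthogonal decomposition, the inclusion $i \colon \cA \to \cE$ admits an exact right adjoint $p \colon \cE \to \cA$ with counit $\epsilon$, and the inclusion $j \colon \cB \to \cE$ admits an exact left adjoint $q \colon \cE \to \cB$ with unit $\eta$. Moreover, by \ref{lem:sod}.\eqref{lem:sod-iid}, each counit component $\epsilon_x \colon ip(x) \to x$ is an inclusion in $\cE$ and each unit component $\eta_x \colon x \to jq(x)$ is a projection in $\cE$, and for $v \in \cA$ (resp.\ $v \in \cB$), the component $\epsilon_v$ (resp.\ $\eta_v$) is an equivalence.

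To show that $\cA$ is left special, consider a projection $\pi \colon x \twoheadrightarrow v$ in $\cE$ with $v \in \cA$. Take $u := p(x) \in \cA$ and $f := \epsilon_x \colon p(x) \to x$. By naturality of $\epsilon$ we have
\[ \pi \circ \epsilon_x \;=\; \epsilon_v \circ p(\pi), \]
and $\epsilon_v$ is an equivalence since $v \in \cA$. Since $p$ is exact (as an exact functor into $\cA$ equipped with the exact structure of \cref{lem:ext-closed-subcat}), the map $p(\pi)$ is a projection in $\cA$, and hence so is $\pi \circ \epsilon_x$. Thus $\cA$ is left special.

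To show that $\cB$ is right special, we pass to opposite categories. The pair $(\cB^\op, \cA^\op)$ is a semi-orthogonal decomposition of $\cE^\op$ by inspection of \cref{defn:sod} (the exact functor $e$ in \cref{defn:sod} transposes correctly), so by the argument just given (applied with the roles of $\cA$ and $\cB$ swapped on the opposite side), $\cB^\op$ is left special in $\cE^\op$. Concretely, given an inclusion $\iota \colon v \rightarrowtail x$ in $\cE$ with $v \in \cB$, one takes $u := q(x) \in \cB$ and the unit $\eta_x \colon x \to q(x)$; by naturality, $\eta_x \circ \iota = q(\iota) \circ \eta_v$ with $\eta_v$ an equivalence, and $q(\iota)$ is an inclusion in $\cB$ since $q$ is exact. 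This is exactly the left special condition for $\cB^\op \subseteq \cE^\op$, i.e.\ the right special condition for $\cB \subseteq \cE$.

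The only real point to verify is that the counit $\epsilon_v$ is an equivalence precisely when $v \in \cA$; but this is immediate from $p$ being right adjoint to the fully faithful inclusion $i$ (and dually for the unit). I do not expect any substantive obstacle here: everything is a direct application of \cref{lem:sod} together with naturality.
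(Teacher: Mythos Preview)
Your proof is correct and follows essentially the same route as the paper: both use the exact right adjoint $p$ of the inclusion $\cA \hookrightarrow \cE$, apply it to a projection onto an object of $\cA$, and invoke naturality of the counit together with the fact that the counit is an equivalence on $\cA$. The paper's version is more terse (it does not name the counit explicitly), but the argument is identical, and the right-special claim for $\cB$ is handled by the same opposite-category reduction.
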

\begin{proof}
    Let $x\rightarrowtail y\twoheadrightarrow a$ be an exact sequence in $\cE$ with $a \in \cA$. Then $p(a) \to a$ is an equivalence, so the commutative triangle
    \[\begin{tikzcd}
        p(y)\ar[d]\ar[dr, two heads] & \\
        y\ar[r, two heads] & a
    \end{tikzcd}\]
    witnesses that $\cA$ is left special in $\cE$. By considering opposite categories, it follows that $\cB$ is right special in $\cE$.
\end{proof}

\begin{prop}
    The functor $\stab\colon\exact\to\catst$ preserves semi-orthogonal decompositions.
\end{prop}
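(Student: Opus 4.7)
The plan is to invoke characterisation \eqref{lem:sod-vi} of \cref{lem:sod}, which encodes an SOD as a cofibre sequence of exact endofunctors $ip \rightarrowtail \id \twoheadrightarrow jq$ whose two constituents realise the counit of $i \dashv p$ and the unit of $q \dashv j$. If this package can be transferred across the universal property of the stable envelope (\cref{thm:klemenc}), the conclusion follows automatically.

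The first step is to establish fully faithfulness of the two induced functors $\stab(i) \colon \stab(\cA) \to \stab(\cE)$ and $\stab(j) \colon \stab(\cB) \to \stab(\cE)$. By \cref{lem:sod-left-special}, $\cA$ is left special and $\cB$ is right special in $\cE$, so this follows from Keller's criterion (\cref{thm:keller}) together with its dual (\cref{cor:keller-right-special}).

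Next, the exact functors $p \colon \cE \to \cA$ and $q \colon \cE \to \cB$ extend via the universal property of \cref{thm:klemenc} to exact functors $\widetilde{p} \colon \stab(\cE) \to \stab(\cA)$ and $\widetilde{q} \colon \stab(\cE) \to \stab(\cB)$. Viewing the unit and counit of $i \dashv p$ (respectively of $q \dashv j$) as natural transformations between exact functors valued in $\stab(\cE)$, the equivalence of functor categories $\Fun^\exct(\stab(\cE), \stab(\cE)) \simeq \Fun^\exct(\cE, \stab(\cE))$ transfers them to natural transformations exhibiting $\stab(i) \dashv \widetilde{p}$ and $\widetilde{q} \dashv \stab(j)$ as adjunctions in $\catst$; the triangle identities pass through because they are identities of natural transformations. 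The same equivalence applied to the pointwise cofibre sequence $j_\cE ip \to j_\cE \to j_\cE jq$ of functors $\cE \to \stab(\cE)$ (obtained by post-composing $ip \rightarrowtail \id_\cE \twoheadrightarrow jq$ with the exact functor $j_\cE$) yields a cofibre sequence $\stab(i)\widetilde{p} \to \id_{\stab(\cE)} \to \stab(j)\widetilde{q}$ of endofunctors of $\stab(\cE)$, whose two maps are precisely the counit and unit constructed above.

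To conclude, note that $\stab(\cE)$ carries the maximal exact structure, so every morphism is simultaneously an inclusion and a projection; the inclusion/projection hypotheses of \eqref{lem:sod-vi} are thus automatic, and \cref{lem:sod} applies to show that $(\stab(\cA), \stab(\cB))$ is a semi-orthogonal decomposition of $\stab(\cE)$. The main subtlety is verifying that the universal property of the stable envelope is really an equivalence of functor categories (not merely of underlying spaces of objects) and that it preserves pointwise cofibres, so that the adjunction data and cofibre sequence genuinely descend; once this enriched form of \cref{thm:klemenc} is granted, the remainder of the argument is a direct translation of the SOD data from $\cE$ to $\stab(\cE)$.
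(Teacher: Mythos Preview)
Your proposal is correct and follows essentially the same strategy as the paper: both arguments establish full faithfulness of $\stab(i)$ and $\stab(j)$ via \cref{lem:sod-left-special} together with Keller's criterion and its dual, then transport the adjunction data and the exact sequence $ip \rightarrowtail \id \twoheadrightarrow jq$ through the equivalence $\Fun^\exct(\stab(\cE),\stab(\cE)) \simeq \Fun^\exct(\cE,\stab(\cE))$ to verify condition~\eqref{lem:sod-vi} of \cref{lem:sod}. The paper phrases the second step more succinctly by observing that $\stab$ is a 2-functor (hence preserves adjunctions) and that the induced map on exact functor categories is exact for the pointwise structure, but this is exactly the content of what you spell out by hand; note also that the ``enriched'' form of \cref{thm:klemenc} you flag as a subtlety is already its stated form, so no additional work is needed there.
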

\begin{proof}
    Given a semi-orthogonal decomposition $(\cA,\cB)$, we want to show that the pair $(\stab(\cA),\stab(\cB))$ is a semi-orthogonal decomposition of $\stab(\cE)$.
    Let $i \colon \cA \to \cE$ and $j \colon \cB \to \cE$ be the inclusion functors.
    
    Due to \cref{lem:sod-left-special}, Keller's criterion~\cref{thm:keller} and its dual version \cref{cor:keller-right-special} imply that $\stab(i)$ and $\stab(j)$ are fully faithful.

    Now, note that $\stab$ is in fact a 2-functor: for any two exact categories $\cE_1, \cE_2$, there is a functor
    \[
        \begin{tikzcd}
            \Fun^\exct(\cE_1, \cE_2)\arrow[r, "(j_{\cE_2})_*"] & \Fun^\exct(\cE_1, \stab(\cE_2))\simeq\Fun^\exct(\stab(\cE_1), \stab(\cE_2))
        \end{tikzcd}
    \]
    which recovers the functoriality of $\stab$ on arrows of $\exact$ when taking underlying $\infty$-groupoids. In particular, it follows that $\stab$ preserves adjunctions.

    Since $j \colon \cE \to \stab(\cE)$ is exact, the functor $\Fun^\exct(\cE, \cE)\to\Fun^\exct(\stab(\cE), \stab(\cE))$ is exact with respect to the pointwise exact structure.
    
    Combining these observations, we see that $(\stab(\cA),\stab(\cB))$ satisfies condition~\eqref{lem:sod-vi} of \cref{lem:sod}, so we are done.
\end{proof}

In particular, it follows from the proposition that the category $\Fun_{\add}(\catst, \Sp)$ of additive invariants defined on $\catst$ identifies as the reflexive subcategory of $\Fun_{\add}(\exact, \Sp)$ spanned by those additive invariants defined on $\exact$ which are invariant under passage to the stable envelope, or equivalently, which are right Kan extended from $\catst$.

\begin{rem}
 For the sake of completeness, let us point out that in complete analogy to the case of stable categories $\Omega^\infty\K \colon \exact \to \An$ is also the initial group-like additive invariant under the groupoid core.
 The proof of this statement presented in \cite[Section~5]{hls:loc-thm-for-ktheory} generalises in straightforward fashion from stable to exact categories after taking note that the additivity theorem holds for the connective K-theory of exact categories.
 This in turn can be seen by observing that the proof of additivity in \cite[Section~4]{hls:loc-thm-for-ktheory} also works for the Q-construction applied to exact categories.
 Alternatively, one combines the additivity theorem for Waldhausen categories proved by Barwick \cite{barwick:ktheory} with the fact that the Q-construction is the edgewise subdivision of the $S_\bullet$-construction.

 This universal property is different from the one established in \cref{thm:univ-prop} since there exist additive invariants which are not invariant under passage to the stable envelope.
 For example, the functor $\cE \mapsto \K(\Fun^\exct(\Sp^\omega,\cE))$ is an additive invariant, restricts to the ordinary K-theory functor on stable categories, but vanishes on exact $1$-categories.
\end{rem}

\bibliographystyle{alpha}
\bibliography{HeartStructures}

\end{document}